\tikzset{
  tick/.style={
    postaction={decorate, decoration={markings,
      mark=at position 0.5 with {\draw[thick] (0,-3pt)--(0,3pt);}}}
  },
  twoticks/.style={
    postaction={decorate, decoration={markings,
      mark=at position 0.5 with {
        \draw[thick] (-2pt,-3pt)--(-2pt,3pt);
        \draw[thick] ( 2pt,-3pt)--( 2pt,3pt);
      }}}
  },
  threeticks/.style={
    postaction={decorate, decoration={markings,
      mark=at position 0.5 with {
        \draw[thick] (-3pt,-3pt)--(-3pt,3pt);
        \draw[thick] ( 0pt,-3pt)--( 0pt,3pt);
        \draw[thick] ( 3pt,-3pt)--( 3pt,3pt);
      }}}
  }
}
\newcommand{\R}{\mathbb{R}}
\newcommand\Vr{\mathcal{V}}
\newcommand\ph[1]{\text{PH}_{#1}}
\newcommand\sort{\mathrm{sort}}
\newcommand\lune{\mathrm{lune}}
\newcommand
\newcommand\rng{\mathrm{RNG}}
\newcommand\diam{\mathrm{diam}}
\newcommand\spx[1]{\mathrm{sp}_{#1}}
\newcommand\mst{\mathrm{MST}}
\newcommand\rvr{\mathcal{R}}
\newcommand\Angle{\mathrm{Angle}}
\theoremstyle{plain}
\newtheorem*{theorem*}{Theorem}
\newtheorem{theorem}{Theorem}[section]
\newtheorem{lemma}[theorem]{Lemma}
\newtheorem{corollary}[theorem]{Corollary}
\newtheorem{prop}[theorem]{Proposition}
\theoremstyle{definition}
\newtheorem{definition}[theorem]{Definition}
\newtheorem{example}[theorem]{Example}
\newtheorem{remark}[theorem]{Remark}
\title{Computation of degree-1 persistent homology on larger point-clouds using the Reduced Vietoris-Rips filtration}
\author{Musashi Koyama, Facundo M\'emoli, Vanessa Robins, Katharine Turner}
\date{\today}
\begin{document}

\maketitle

\begin{abstract}
Computing Vietoris-Rips persistent homology on a point cloud is computationally expensive, making it unsuitable for analysis of large datasets. In this paper, we present an alternative method for computing Vietoris-Rips persistent homology on point clouds in Euclidean space. We introduce the Reduced Vietoris-Rips complex, which contains an order of magnitude fewer simplices but has the same degree-$1$ persistent homology as the Vietoris-Rips filtration. By using the Reduced Vietoris-Rips complex and leveraging the geometry of Euclidean space, we are able to compute the Vietoris-Rips persistent homology for significantly larger point clouds than current implementations. 
\end{abstract}

\subsubsection*{Keywords:} Persistent homology, Vietoris-Rips filtration, Relative neighborhood graph. 

\subsubsection*{MSC:} 
Primary: 55N31 Persistent homology and applications, topological data analysis \\
Secondary: 68T09 Computational aspects of data analysis and big data

\subsubsection*{Acknowledgements}

This paper is based on content in the PhD thesis of M.K which was supervised by F.M, V.R and K.T at the Australian National University. M.K is now an ELBE postdoctoral researcher jointly supported by the Centre for Systems Biology Dresden, the Max Planck Institute of Cell Biology and Genetics and the Max Planck institute for the Physics of Complex Systems.    F.M. was supported
by the NSF through grants  IIS-1901360, CCF-1740761, CCF-
2310412, DMS-2301359 and by the BSF under grant 2020124.
K.T. was suppported by the ARC through DECRA Fellowship DE200100056.
 
\clearpage
\tableofcontents

\section{Introduction}
\label{Introduction}

Persistent homology is the study of how topological features evolve in a filtration of topological spaces. One of the most fundamental structures in persistent homology is the persistence barcode which summarises the parameter lifespan of topological features in the filtration.  The beginnings of persistent homology can be traced back to \cite{frosini_1990} where size theory, equivalent to modern day zero-dimensional persistent homology was introduced by Frosini. Modern persistent homology was developed fairly simultaneously by Robins \cite{robins1999towards}, and Edelsbrunner, Letscher and Zomorodian \cite{edelsbrunner_original}. Since then, persistent homology has been employed in several disciplines from health and neuroscience \cite{BIO21010LungCancerSurvivalisAssociatedWithPersistentHomologyofTumorImaging,stolz_2022_multiscale,moon2023using,curto-sanderson-2025} to materials science \cite{hiraoka_hierarchical_2016,lee_quantifying_2017,nearly_hexagonal_lattice}. 

The context for this paper is the problem of quantifying the shape approximated by a finite set of points in a metric space, i.e., a point-cloud, $X$.  
The application of persistent homology to such data first requires the construction of a filtration of topological spaces that capture the shape of $X$ at scale $r$. The standard choices are the \v{C}ech filtration built from the union of balls of radius $r/2$ and the Vietoris-Rips filtration consisting of subsets of diameter $\leq r$.  
To each of these spaces, we can associate a combinatorial simplicial complex: a $q$-simplex $\{x_0,\ldots,x_q\}$ spans $q+1$ points of $X$ in the \v{C}ech complex when the relevant $q+1$ balls have non-empty intersection. 
The points form a $q$-simplex of the Vietoris-Rips complex when $\diam\{x_0,\ldots,x_q\} \leq r$, 
see the text~\cite{Dey_Wang_2022} for an overview and further details.  

For point-clouds in low-dimensional Euclidean spaces there are efficient algorithms to construct the \v{C}ech filtration, namely the alpha-shape filtration \cite{edelsbrunner1995union,akkiraju1995alpha} or an approximation to this called the Delaunay-Rips filtration \cite{clemot2025delaunayripsfiltrationstudyalgorithm}.
The alpha-shape filtration is efficient because it excludes many unnecessary simplices from the full \v{C}ech construction. 
It does this by partitioning the union of balls by the Voronoi cells of $X$, guaranteeing that simplices in the alpha-complex are faces from the Delaunay triangulation. 
Note that Delaunay triangulation algorithms build top-dimensional simplices and for a point-cloud with $n$ points in $\R^D$, there are $O(n^{\lceil \frac{D}{2} \rceil})$ simplices~\cite{toth2017handbook}.

In contrast, the Vietoris-Rips construction always begins with 1-simplices -- edges between pairs of points ordered by length, and adds $q$-simplices when all pairwise distances between the $q+1$ points are less than the chosen scale. 
In the full Vietoris-Rips filtration of $n$ points there are $\binom{n}{q+1}$, thus $O(n^{q+1})$, $q$-simplices.  
The simplest way to exclude unnecessary simplices in this context is to restrict the degree of homology, because calculating degree-$q$ homology only requires knowledge of the $(q+1)$, $q$, and $(q-1)$-simplices. 
Even for $q=1$ however, this means all $O(n^3)$ $2$-simplices are constructed and ordered by diameter. 
Currently the most popular software for computing persistent homology from Vietoris-Rips filtration is $\mathsf{Ripser}$ \cite{Ripser} and its GPU-accelerated version $\mathsf{Ripser}++$ \cite{zhang_et_al:LIPIcs.SoCG.2020.70}.

In this paper we define a new filtration for persistent homology computation in degree-$q$ called the \emph{Reduced Vietoris-Rips filtration} of $X$, denoted $\rvr^{q}_\bullet(X)$, containing an order-of-magnitude fewer $(q+1)$-simplices when compared with the standard Vietoris-Rips filtration $\Vr_\bullet(X)$. 
The reduced filtration can be constructed for any finite metric space. 
Our main theoretical result proves that the Reduced Vietoris-Rips filtration has persistent homology isomorphic to that of the standard Vietoris-Rips filtration. 
 
\begin{theorem*}
%\label{theorem-VR-RVR-isomorphism}
Consider a finite metric space $X$.   
Then there exists a family of isomorphisms $\theta_{\bullet}$ such that the following diagram commutes

\begin{equation}
 \begin{tikzcd}[ampersand replacement=\&]
  H_{q}(\rvr^{q}_{r_1}(X)) \arrow[r, "f_{r_1}^{r_2}"] \arrow[d, "\theta_{r_1}"'] \& H_{q}(\rvr^{q}_{r_2}(X)) \arrow[d, "\theta_{r_2}"] \\
  H_{q}(\Vr_{r_1}(X)) \arrow[r, "g_{r_1}^{r_2}"] \& H_{q}(\Vr_{r_2}(X))
  \end{tikzcd} 
\end{equation}

\noindent for all $r_1$ and $r_2$ such that $0 \leq  r_1 < r_2$.  Above, $f_{r_1}^{r_2}$ and $g_{r_1}^{r_2}$ are the maps at homology level induced by the natural inclusions. 
\end{theorem*}

The main text of this paper develops the definitions and theorem proof for the case of homology degree $q=1$ in Sections~\ref{RVR-definition} and~\ref{RVR-theorem}. The general definition of $\rvr^{q}_\bullet(X)$ and proof of the above theorem is provided in the appendix. 
In devising an efficient implementation of the reduced Vietoris-Rips filtration we restrict to point-clouds in Euclidean space as this enables the use of geometric data structures (described in Section~\ref{GDS}) that speed up location searching, specifically the $kd$-tree.  
The algorithm and overall complexity analysis is given in Section~\ref{Algorithm}. 
The experiments presented in Section~\ref{sec:experiments} compute persistent homology in degree-1 for point-clouds in $\R^{10}$ where the use of alpha-shapes would require the construction of $O(n^5)$ 10-dimensional simplices and then their $2$-dimensional faces.  The reduced Vietoris-Rips approach instead directly constructs $O(n^2)$ $2$-simplices. 
The performance of our implementation is compared with that of $\mathsf{Ripser}$ and discussed in Section~\ref{Discussion:Ripser}.

Previous work related to reducing the number of simplices used for computing persistent homology includes edge collapses, \cite{glisse2022swap}, \cite{boissonnat2020edge}, featured in the computational topology software suite GUDHI \cite{10.1007/978-3-662-44199-2_28}. We compare our software to that of GUDHI's edge collapses in Section \ref{sec:comparing-euclideanph1-with-gudhis-edge-collapse} and~\ref{Discussion:EdgeCollapse}. Other work has reduced the number of simplices by computing an approximation of $\ph{1}(X)$ \cite{sheehy2012linear} or building a filtration using a subset of points from the point-cloud \cite{de2004witness}, \cite{graf2026flood}. 

\section{Preliminaries}
\label{Preliminaries}

In this section we briefly review notation and  preliminary concepts necessary for content later in the paper.

\subsection{Notation and terminology}
\label{Notation}

The primary object of study is a finite set of points $X$ (a point cloud) embedded as a subset of $D$-dimensional Euclidean space $\mathbb{R}^{D}$ with a bijection $\psi: X \rightarrow \{1,....|X|\}$ which assigns each point a natural number from $1$ to $n = |X|$.
The distance between two points in $x_1, x_2 \in \R^D$ will be written $d(x_1,x_2)$. In this paper, $n$ will exclusively be used to denote the size of a point cloud, that is $n = |X|$. 
A $q$-simplex consisting of the vertices $x_0,\ldots,x_q$ will be denoted as $\langle x_0\ldots x_q \rangle$.

\subsection{Homology}

Here we will give a terse introduction to simplicial homology, its main purpose is to establish notation. We present only what is absolutely necessary and do not state the results in their utmost generality. The reader who desires more details is directed towards Munkres' textbook \emph{Elements of Algebraic Topology}  \hspace{0.5pt}\cite{munkres2018elements}.

First we will define the notion of a simplicial complex.

\begin{definition}[Simplicial complex]
    A simplicial complex with vertex set $V = \{v_{1},...,v_{n}\}$ is a set $K \subset 2^{V}$ which satisfies the following properties. 

    \begin{itemize}
        \item $\emptyset \in K$
        
        \item $\{v_{i}\} \in K$ for all $i\in \{1,...,n\}$. 

        \item If $\sigma \in K$, then all subsets of $\sigma$ are also in $K$. 

    \end{itemize}
\end{definition}

Next we define the notion of a simplex. 

\begin{definition}[$q$-simplex]
    Consider a simplicial complex $K$ with vertex set $V = \{v_1,...,v_n\}$. Then let $\sigma$ be a subset of $V$ with $q+1$ elements. Then we refer to $\sigma$ as a $q$-simplex.
\end{definition}

We will make reference to the dimension of a simplex in Definition \ref{binary-relation-on-simplices}. 

\begin{definition}[Dimension of a simplex]
    Consider a simplicial complex $K$. Let $\sigma$ be a $q$-simplex, then we say that $\sigma$ has dimension $q$ and denote this by $\dim (\sigma) = q$
\end{definition}

It will be convenient later on to have a special term for when one simplex is a subset of another simplex. 

\begin{definition}[Faces and cofaces]
    Consider a simplicial complex $K$ with simplices $\sigma, \tau$ with $\sigma \subset \tau$. Then we say that $\sigma$ is a face of $\tau$ and $\tau$ is a coface of $\sigma$. 
\end{definition}

From here on in we write $\sigma = \{w_0,...,w_q\}$ as $\langle w_0...w_{q}\rangle$, following  standard notation for oriented simplices. Since we are working with $\mathbb{Z}_{2}$-coefficients we can effectively ignore the orientation of the simplices. This means that we can refer to a given simplex using any permutation of its vertices. For example, the $2$-simplex $\langle xyz \rangle$ can equally be referred to as $\langle xzy \rangle = \langle zxy \rangle = \langle zyx \rangle = \langle yzx \rangle = \langle yxz \rangle$.  

The addition of simplices is formalised in the next definition.

\begin{definition}[Simplicial $q$-chain]
    A simplicial $q$-chain is a finite formal sum of $q$ simplices,  
    \begin{equation}
        \sum_{ i = 1}^{N}c_i\sigma_i
    \end{equation}
    In this paper, the coefficients $c_i$ are taken from $\mathbb{Z}_2$. 
\end{definition}

We now define three important vector spaces. 

\begin{definition}[Chain group]
    $C_{q}(K)$ is the free abelian group with coefficients in $\mathbb{Z}_2$ with generating set consisting of all $q$-simplices. It is customary to set $C_{-1}(K) = 0$. 
\end{definition}

\begin{remark}
    It is worth noting that $C_{q}(K)$ is actually a vector space since $\mathbb{Z}_{2}$ is a field and that from this point on, any time the word ``group" is mentioned it could be replaced with ``vector space". We continue to use the word group to adhere to the ``traditional" presentation of persistent homology, though the reader unfamiliar with groups can replace them with vector spaces for the purposes of this paper. 
\end{remark}

\begin{definition}[Boundary map]
    Let $K$ be a simplicial complex. Consider the map $\partial_{q+1}^{K}: C_{q+1}(K)\rightarrow C_{q}(K)$ defined as follows. Let $\sigma = \langle v_{0},...,v_{q}\rangle$. Then we define $\partial_{q+1}^{K} (\sigma)$ as follows:

    \begin{equation}
        \partial_{q+1}^{K} (\sigma) = \sum_{i=0}^{q}\langle v_{0}...\hat{v_i}...v_q \rangle 
    \end{equation}
    We extend this linearly to a map on $C_{q+1}(K)$.
    Here $\hat{v_i}$ means that $v_i$ is to be omitted from $\langle v_0...v_i...v_q\rangle$. When it is clear what $K$ is, we may write $\partial_{q+1}^K$ as $\partial_{q+1}$. When $q$ is also clear, we may simply write $\partial_{q+1}$ as $\partial$.
    The map $\partial_{0}^K: C_{0}(K) \rightarrow C_{-1}(K) := 0$ is simply the zero-map. 
\end{definition}

\begin{definition}[$q$-cycles]
    Consider a simplicial complex $K$. We denote $\mathrm{ker}(\partial_q^K) = Z_{q}(K)$. We call a $q$-chain $c \in Z_{q}(K)$ a $q$-cycle. $Z_{q}(K)$ will be referred to as the group of $q$-cycles. 
\end{definition}

\begin{definition}[$q$-boundaries]
    Consider a simplicial complex $K$. We denote $\mathrm{im}(\partial_{q+1}) = B_{q}(K)$. We call a $q$-chain $c \in B_{q}(K)$ a $q$-boundary. $B_{q}(K)$ will be referred to as the group of $q$-boundaries. 
\end{definition}

We are now ready to define the homology groups of a simplicial complex $K$, but before we do, we state an extremely easy to verify lemma. 

\begin{lemma}
\label{boundary-of-boundary}
    Consider a simplicial complex $K$. Then we have $B_{q}(K) \subset Z_{q}(K)$. 
\end{lemma}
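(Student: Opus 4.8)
The plan is to establish the equivalent operator identity $\partial_q^K \circ \partial_{q+1}^K = 0$; once this holds, $B_q(K) = \mathrm{im}(\partial_{q+1}^K) \subseteq \ker(\partial_q^K) = Z_q(K)$ follows immediately from the definitions of $B_q(K)$ and $Z_q(K)$. Since both $\partial_{q+1}^K$ and $\partial_q^K$ are defined by linear extension from their values on simplices, it suffices to check that $\partial_q^K \partial_{q+1}^K \sigma = 0$ for an arbitrary generating $(q+1)$-simplex $\sigma = \langle v_0 \ldots v_{q+1} \rangle$.

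First I would expand $\partial_{q+1}^K \sigma = \sum_{i=0}^{q+1} \langle v_0 \ldots \hat{v_i} \ldots v_{q+1}\rangle$ and apply $\partial_q^K$ termwise, obtaining a double sum of $(q-1)$-simplices of the form $\langle v_0 \ldots \hat{v_j} \ldots \hat{v_i} \ldots v_{q+1}\rangle$, i.e. those obtained by first deleting $v_i$ and then deleting a second vertex. The crux is that, for any unordered pair of indices, the $(q-1)$-simplex with exactly those two vertices omitted arises precisely twice in the double sum: once by deleting the larger-indexed vertex first and once by deleting the smaller-indexed one first. Over $\mathbb{Z}_2$ these two copies cancel, so the whole double sum is $0$.

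The only point requiring mild care is the index bookkeeping when $\partial_q^K$ acts on a face $\langle v_0 \ldots \hat{v_i} \ldots v_{q+1}\rangle$, since the remaining vertices have shifted position; but working with $\mathbb{Z}_2$-coefficients removes any sign or orientation subtlety, and the argument collapses to the purely combinatorial observation that each $(q-1)$-face obtained by removing two vertices is counted an even number of times. I would also dispatch the low-degree case $q = 0$ directly: $\partial_0^K \partial_1^K \langle v_0 v_1\rangle = \partial_0^K(\langle v_1\rangle + \langle v_0\rangle) = 1 + 1 = 0$ in $\mathbb{Z}_2$, in accordance with the convention $C_{-1}(K) = \mathbb{Z}_2$. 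As the paper itself notes, there is no real obstacle here: the pairing-and-cancellation argument is the entire content, and the $\mathbb{Z}_2$ setting trivializes the bookkeeping that is its only moving part.
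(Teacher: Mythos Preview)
Your proof is correct and is exactly the standard $\partial\circ\partial=0$ argument via pairwise cancellation over $\mathbb{Z}_2$. The paper does not actually give a proof of this lemma; it merely labels it ``extremely easy to verify'' and moves on, so your write-up is precisely the omitted verification.
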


\begin{definition}[Homology groups of a simplicial complex]
Consider a simplicial complex $K$. Then the $q$th homology group is defined as $H_{q}(K) = Z_{q}(K)/B_{q}(K)$ 
\end{definition}

Note that Lemma $\ref{boundary-of-boundary}$ is necessary to show that $B_{q}(K)$ is indeed a subgroup of $Z_{q}(K)$ and hence the quotient group can be taken. It is here that we note that elements of $H_{q}(K)$ will be written as $[\gamma]$ to denote the fact that $\gamma$ is a representative of the class $[\gamma] \in H_{q}(K)$. Sometimes we will also use coset notation and write $[\gamma]$ as $\gamma + H_{q}(K)$. 

\subsection{Persistent Homology}

The following is a brief summary of some basic definitions in persistent homology. The reader who desires more context and details is directed towards chapter 7 of \emph{Computational Topology} \cite{book}, where the definitions below come from. 

\begin{definition}[Filtration of simplicial complexes]
Given an index set $\mathcal{I}$ and a set of simplicial complexes $(K_{i})_{i\in \mathcal{I}}$, if for $i \leq j$ in $\mathcal{I}$ we have $K_{i} \subset K_{j}$, we call the collection $(K_{i})_{i\in \mathcal{I}}$ a filtration of simplicial complexes.
\end{definition}

In the algorithm for computing persistent homology, we require 
a particular type of filtration.

\begin{definition}[Simplex-wise filtrations]
\label{def-simplex-wise-filtrations}
$(K_{i})_{i\in \mathcal{I}}$ is a simplex-wise filtration when $\mathcal{I} = \{0,...,m\}$ and $K_{i} = K_{i-1}\cup \sigma_{i}$  for $i \leq m$ where $\sigma_{i}$ is a single simplex and it is understood that $K_{0} = \emptyset$.
\end{definition}
As per this definition, $m$ will always denote the number of simplices of all possible dimensions in the filtration. 

Consider a simplex-wise filtration. For $i<j$ we apply the degree $q$ homology functor $H_{q}(-)$ to the inclusion $K_{i} \subset K_{j}$ to obtain a linear homomorphism $f_{i}^{j}: H_{q}(K_{i})\rightarrow H_{q}(K_j)$. Persistent homology quantifies how the homology changes across the parameter range. 

\begin{definition}[Birth of a cycle]
\label{definition-of-birth}
Consider a simplex-wise filtration $(K_{i})_{i\in \{0,...,m\}}$. A homology class represented by $\gamma + B_{q}(K_{j}) \in H_{q}(K_{j})$ is said to be born at index $j$ if $j$ is the smallest index such that for all $j' < j$ there is no $\beta + B_{q}(K_{j'}) \in H_{q}(K_{j'})$ with $f_{j'}^{j}(\beta + B_{q}(K_{j'})) = \gamma + B_{q}(K_{j})$. The simplex $\sigma_j$ is said to create the homology class represented by $\gamma + B_{q}(K_{j})$. 
\end{definition}

\begin{definition}[Death of a cycle]
\label{definition-of-death}
Consider a simplex-wise filtration $(K_{i})_{i\in \{0,...,m\}}$. A homology class represented by $\gamma + B_{q}(K_{j})$ born at $j$ is said to die at index $l$ if $l$ is the smallest index such that $f_{j}^{l}(\gamma + B_{q}(K_{j})) = f_{j'}^{l}(\beta + B_{q}(K_{j'}))$ for some $j' < j$ and some $\beta + B_{q}(K_{j'}) \in H_{q}(K_{j'})$. The simplex $\sigma_{l}$, such that $K_{l} = K_{l-1}\cup \sigma_{l}$, is said to kill the homology class represented by $\gamma + B_{q}(K_{j})$. 
\end{definition}

For a simplex-wise filtration $(K_{i})_{i\in \{0,...,m\}}$ each simplex $\sigma_{i}$ can only create or kill a homology class. To demonstrate this we prove some lemmas which are well known facts, but the author could not find any references which explicitly proved these lemmas only using
Definitions \ref{definition-of-birth} and \ref{definition-of-death} and thus we provide their proof here.

\begin{lemma}
\label{can-only-birth}
    Let $(K_{i})_{i\in \{0,...,m\}}$ be a simplex-wise filtration and consider the change in homology between $K_{i-1}$ and $K_{i} = K_{i-1} \cup \sigma_{i}$, where $\sigma_{i}$ is a $q$-simplex. Then $\partial \sigma_{i} \in B_{q-1}(K_{i-1})$ if and only if $\sigma_{i}$ creates a degree-$q$ homology class. 
\end{lemma}

\begin{proof}
    Suppose $\partial \sigma_{i} \in B_{q-1}(K_{i-1})$. Then we must have that $\partial \sigma_{i} = \sum_{j\in A} \partial \sigma_{j}$ where $A \subset \{0,...,i-1\}$. Then we have that $\partial \sigma_{i} - \sum_{j\in A}\partial \sigma_{j} = 0$ and thus $\partial (\sigma_{i} - \sum_{j \in A} \sigma_{j}) = 0$. Then we have that $\sigma_{i} - \sum_{j\in A}\sigma_{j} + B_{q}(K_{i})$ is a homology class which is born at $i$. To see this, note that $\sigma_{i} - \sum_{j\in A} \sigma_{j} + B_{q}(K_{i})$ cannot possibly be  expressed in the form $f_{l}^{i}(\gamma + B_{q}(K_{l})) = \sigma_{i} - \sum_{j\in A}\sigma_{j} + B_{q}(K_{i}), l < i$  since this would require $\gamma + \sum_{j \in A}\sigma_{j} - \sigma_{i} \in B_{q}(K_{i})$ and this cannot occur since all elements of $\gamma + \sum_{j \in A}\sigma_{j}$ must be in $K_{i-1}$. Now we prove the converse, that is suppose that the addition of $\sigma_{i}$ entering the filtration gives birth to a degree-$q$ homology class $\gamma_1 + B_{q}(K_{i}) \in H_{q}(K_{i})$. We will show that $\gamma_{1}$ must be of the form $\sum_{k \in B}\sigma_{k} + \sigma_{i}$, where $B \subset \{0,...,i-1\}$. If this was not the case, i.e $\gamma_{1}$ was of the form $\sum_{k\in B}\sigma_{k}$ then we would have $\gamma_{1} + B_{q}(K_{\max(B)}) \in H_{q}(K_{\max (B)})$ meaning $f_{\max (B)}^{i}(\gamma_{1} + B_{q}(K_{\max(B)})) = \gamma_{1} + B_{q}(K_{i})$ contradicting the fact that $\gamma_1 + B_{q}(K_{i})$ was born upon the addition of $\sigma_{i}$. Since we know that $\partial (\gamma_{1}) = 0$ we have that $\partial (\sigma_{i} + \sum_{k \in B} \sigma_{k}) = 0$ which means that $ \partial \sigma_{i} = \sum_{k\in B}\partial \sigma_{k}$. Hence we have $\partial \sigma_{i} \in B_{q-1}(K_{i-1})$. 
\end{proof}

\begin{lemma}
\label{can-only-kill}
    Let $(K_{i})_{i\in \{0,...,m\}}$ be a simplex-wise filtration and consider the change in homology between $K_{i-1}$ and $K_{i} = K_{i-1} \cup \sigma_{i}$, where $\sigma_{i}$ is a $q$-simplex. Then $\partial \sigma_{i} \notin B_{q-1}(K_{i-1})$ if and only if $\sigma_{i}$ kills a degree-$(q-1)$ homology class. 
\end{lemma}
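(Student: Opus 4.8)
The plan is to prove both implications, reusing the chain-level bookkeeping already established in the proof of Lemma~\ref{can-only-birth}. Since $\sigma_i$ is a $q$-simplex and $K_i = K_{i-1}\cup\sigma_i$ is a simplicial complex (so every face of $\sigma_i$ already lies in $K_{i-1}$), the chain groups satisfy $C_p(K_i)=C_p(K_{i-1})$ for every $p\neq q$; hence $Z_{q-1}(K_i)=Z_{q-1}(K_{i-1})$, $B_q(K_i)=B_q(K_{i-1})$, and $B_{q-1}(K_i)=B_{q-1}(K_{i-1})+\langle\partial\sigma_i\rangle$. In particular $\partial\sigma_i$ is automatically a $(q-1)$-cycle of $K_{i-1}$ (as $\partial\partial\sigma_i=0$), so the hypothesis $\partial\sigma_i\notin B_{q-1}(K_{i-1})$ says precisely that $[\partial\sigma_i]$ is a nonzero class of $H_{q-1}(K_{i-1})$ which becomes a boundary, hence zero, in $H_{q-1}(K_i)$. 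Also, since adding $\sigma_i$ only changes $C_q$, the only degrees in which homology can change are $q$ and $q-1$, so ``killing a class'' can only mean killing a degree-$(q-1)$ class.

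For the forward direction, assume $\partial\sigma_i\notin B_{q-1}(K_{i-1})$. First I would let $j$ be the smallest index in $\{1,\dots,i-1\}$ for which there is a cycle $z\in Z_{q-1}(K_j)$ with $[z]\neq 0$ in $H_{q-1}(K_j)$ and $f_j^{\,i-1}([z])=[\partial\sigma_i]$ in $H_{q-1}(K_{i-1})$; this set is nonempty since $j=i-1$, $z=\partial\sigma_i$ qualifies. Then I would check two things. First, $[z]$ is born at index $j$: if some $[\beta]\in H_{q-1}(K_{j'})$ with $j'<j$ mapped to $[z]$, then $[\beta]\neq 0$ (else $[z]=0$) and $f_{j'}^{\,i-1}([\beta])=[\partial\sigma_i]$, contradicting minimality of $j$. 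Second, $[z]$ dies exactly at index $i$: on the one hand $f_j^{\,i}([z])=f_{i-1}^{\,i}([\partial\sigma_i])=0$ in $H_{q-1}(K_i)$, and this equals $f_{j'}^{\,i}(0)$ for any $j'<j$, so $[z]$ dies at some index $\le i$; on the other hand, if $[z]$ had already died at some $k\le i-1$, pushing the defining equality $f_j^{\,k}([z])=f_{j'}^{\,k}([\beta])$ forward by $f_k^{\,i-1}$ yields $[\partial\sigma_i]=f_{j'}^{\,i-1}([\beta])$ with $j'<j$, which contradicts either $[\partial\sigma_i]\neq 0$ (if $[\beta]=0$) or the minimality of $j$ (if $[\beta]\neq 0$). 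Hence $\sigma_i$ kills the degree-$(q-1)$ class $[z]$.

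For the converse I would argue the contrapositive: suppose $\partial\sigma_i\in B_{q-1}(K_{i-1})$. Then $B_{q-1}(K_i)=B_{q-1}(K_{i-1})$, and combined with the observations above the inclusion $K_{i-1}\hookrightarrow K_i$ induces an isomorphism $H_p(K_{i-1})\to H_p(K_i)$ in every degree $p\neq q$, and an injection in degree $q$. So $f_{i-1}^{\,i}$ is injective in all degrees, and therefore no homology class can die at index $i$: if $[\gamma]$, born at $j\le i-1$, satisfied $f_j^{\,i}([\gamma])=f_{j'}^{\,i}([\beta])$ with $j'<j$, then injectivity of $f_{i-1}^{\,i}$ would force $f_j^{\,i-1}([\gamma])=f_{j'}^{\,i-1}([\beta])$, so the death condition of Definition~\ref{definition-of-death} would already hold at an index $\le i-1$, contradicting either the minimality of the death index or the fact that $[\gamma]$ is born at $j$. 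In particular $\sigma_i$ kills nothing, which is what we need.

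The main obstacle is the forward direction. It is tempting to argue purely by ranks --- adding $\sigma_i$ drops $\dim H_{q-1}$ by one --- but the statement demands that some specific class meet Definition~\ref{definition-of-death} verbatim, i.e.\ that it be genuinely born before $i$, not yet dead at $i-1$, and dead at $i$. The minimal-birth-index construction together with functoriality of the maps $f_\bullet^\bullet$ is exactly what pins this down; the remaining steps are the routine chain-group computations inherited from the proof of Lemma~\ref{can-only-birth}.
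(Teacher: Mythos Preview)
Your proof is correct and close in spirit to the paper's, but the two directions are each handled a bit differently. For the forward implication, the paper simply exhibits $[\partial\sigma_i]\in H_{q-1}(K_{i-1})$ and checks that $f_0^{\,i}(0)=f_{i-1}^{\,i}([\partial\sigma_i])$ while $f_0^{\,i-1}(0)\neq[\partial\sigma_i]$, taking $j'=0$ and $[\beta]=0$; it does not verify that $[\partial\sigma_i]$ itself has a well-defined birth index as Definition~\ref{definition-of-death} formally requires. Your construction---tracing $[\partial\sigma_i]$ back to the minimal $j$ at which a nonzero preimage $[z]$ exists and then checking birth and death for $[z]$---fills that gap and is more rigorous. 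For the converse, the paper argues directly: if $\sigma_i$ kills a class then the merging witnesses $\gamma,\beta$ satisfy $\beta-\gamma\in B_{q-1}(K_i)\setminus B_{q-1}(K_{i-1})$, so $B_{q-1}$ strictly grows and hence $\partial\sigma_i\notin B_{q-1}(K_{i-1})$. You instead take the contrapositive, showing that $\partial\sigma_i\in B_{q-1}(K_{i-1})$ forces $f_{i-1}^{\,i}$ to be injective in every degree, which precludes any death at index $i$. Both routes are short; yours isolates the injectivity mechanism more explicitly, while the paper's direct argument is slightly quicker.
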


\begin{proof}
    First we will show that if the addition of $\sigma_{i}$ to the filtration kills a degree-$(q-1)$ homology class then we have $\partial \sigma_{i} \notin B_{q-1}(K_{i-1})$. To this end suppose that $\sigma_{i}$ kills a degree $(q-1)$ homology class $\gamma + B_{q-1}(K_{j})$ born at index $j$. Then that must mean we have some $\beta + B_{q-1}(K_{j'})$ with $j' < j$ such that $f_{j'}^{i}(\beta + B_{q-1}(K_{j'})) = f_{j}^{i}(\gamma + B_{q-1}(K_{j}))$. Then we have that $\beta + B_{q-1}(K_{i}) = \gamma + B_{q-1}(K_{i})$ and thus we have $\beta - \gamma \in B_{q-1}(K_{i})$. By Definition \ref{definition-of-death} we also know that $\beta - \gamma \notin B_{q-1}(K_{i-1})$, otherwise $f_{j'}^{i-1}(\beta + B_{q-1}(K_{j'})) = f_{j}^{i-1}(\gamma + B_{q-1}(K_{j}))$ which would violate the minimality of $i$. Now suppose $\partial \sigma_{i} \in B_{q-1}(K_{i-1})$, then we would have $B_{q-1}(K_{i-1}) = B_{q-1}(K_{i})$, implying $\beta -\gamma \in B_{q-1}(K_{i-1})$ a contradiction. Hence $\partial \sigma_{i} \notin B_{q-1}(K_{i-1})$. Now we prove the converse, suppose $\partial \sigma_{i} \notin B_{q-1}(K_{i-1})$. Since $\partial \sigma_{i} \notin B_{q-1}(K_{i-1})$ we have at index $i-1$ that $\partial \sigma_{i} + B_{q-1}(K_{i-1})$ is a non-trivial homology class. Consider \emph{all} homology classes $\gamma + B_{q-1}(K_{j})$ born at index $j<i$ such that $f_{j}^{i-1}(\gamma + B_{q-1}(K_{j})) = \partial \sigma_{i} + B_{q-1}(K_{i-1})$. Let $\gamma' + B_{q-1}(K_{j'})$ be the oldest of such homology classes, we now show that $\gamma' + B_{q-1}(K_{j'})$ dies at index $i$. We have $\gamma' + B_{q-1}(K_{j'})$ is certainly dead no later than index $i$ since $f_{0}^{i}( 0 + B_{q-1}(K_{0})) = \partial \sigma_{i} + B_{q-1}(K_{i}) = f_{j'}^{i}(\gamma' + B_{q-1}(K_{j'}))$. Now $\gamma' + B_{q-1}(K_{j'})$ cannot die at index earlier than $i$, for if it did then there would exist some $\beta + B_{q-1}(K_{l})$ born at index $l<j'$ such that $f_{l}^{i-1}(\beta + B_{q-1}(K_{l})) = f_{j'}^{i-1}(\gamma' + B_{q-1}(K_{j'})) = \partial \sigma_{i} + B_{q-1}(K_{i-1})$ which violates the fact that $\gamma' + B_{q-1}(K_{j'})$ was selected to be the oldest homology class such that $f_{j'}^{i-1}(\gamma' + B_{q-1}(K_{j'})) = \partial \sigma_{i} + B_{q-1}(K_{i-1})$.
\end{proof}

\begin{lemma}
\label{can-only-birth-or-kill}
Let $(K_{i})_{i\in \{0,...,m\}}$ be a simplex-wise filtration and consider the change in homology between $K_{i-1}$ and $K_{i} = K_{i-1} \cup \sigma_{i}$, where $\sigma_i$ is a $q$-simplex.  Then one, and only one, of the following must occur. 

\begin{itemize}
        \item $\sigma_i$ kills a homology class of degree $q-1$

        \item $\sigma_i$ gives birth to a homology class of degree $q$. 
    \end{itemize}
\end{lemma}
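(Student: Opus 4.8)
The plan is to reduce the entire statement to a binary case split on the single condition $\partial \sigma_i \in B_{q-1}(K_{i-1})$, and then invoke Lemma \ref{can-only-birth} and Lemma \ref{can-only-kill}, both of which are stated as biconditionals. Since ``$\partial \sigma_i \in B_{q-1}(K_{i-1})$'' is a genuine yes/no question, exactly one of it and its negation holds; these two alternatives are precisely the hypotheses appearing in the two preceding lemmas, so the dichotomy there transfers directly into the dichotomy between ``gives birth'' and ``kills''.

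First I would suppose $\partial \sigma_i \in B_{q-1}(K_{i-1})$. Then Lemma \ref{can-only-birth} gives immediately that $\sigma_i$ gives birth to a degree-$q$ homology class, so the second bullet holds. Moreover, since we are not in the case $\partial \sigma_i \notin B_{q-1}(K_{i-1})$, the ``if'' direction of Lemma \ref{can-only-kill} fails to apply; more precisely, its ``kills $\Rightarrow \partial\sigma_i \notin B_{q-1}(K_{i-1})$'' direction, taken contrapositively, shows $\sigma_i$ does \emph{not} kill any degree-$(q-1)$ class. Hence in this case the second bullet holds and the first fails.

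Then, symmetrically, I would suppose $\partial \sigma_i \notin B_{q-1}(K_{i-1})$. Now Lemma \ref{can-only-kill} yields that $\sigma_i$ kills a degree-$(q-1)$ class, so the first bullet holds, while the contrapositive of the ``gives birth $\Rightarrow \partial\sigma_i \in B_{q-1}(K_{i-1})$'' direction of Lemma \ref{can-only-birth} shows $\sigma_i$ does not give birth to any degree-$q$ class. Combining the two cases: in either alternative exactly one bullet is realized, which establishes simultaneously the ``at least one'' and the ``at most one'' parts of the claim.

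I do not expect any real obstacle here, since the work has been front-loaded into Lemmas \ref{can-only-birth} and \ref{can-only-kill}; the only point requiring a little care is to use the fact that those lemmas are biconditionals, so that in each case I have available both the positive implication (to certify the bullet that occurs) and the negation of the other implication (to rule out the bullet that does not). If desired, one can also remark that this exhausts all possibilities because $\partial\sigma_i$ is automatically a $(q-1)$-cycle in $K_{i-1}$, so being a boundary is the only relevant obstruction; but strictly speaking the excluded-middle split on set membership already suffices and no further structural input is needed.
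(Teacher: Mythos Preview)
Your proposal is correct and follows essentially the same approach as the paper: split on whether $\partial\sigma_i \in B_{q-1}(K_{i-1})$ and invoke Lemmas~\ref{can-only-birth} and~\ref{can-only-kill}. Your write-up is slightly more explicit than the paper's in using the biconditional directions to rule out the other bullet in each case, but the argument is the same.
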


\begin{proof}
    Either $\partial \sigma_{i} \in B_{q-1}(K_{i-1})$ or $\partial \sigma_{i} \notin B_{q-1}(K_{i-1})$. Only one of these statements can be true and one of these statements must be true. By Lemma \ref{can-only-birth} the case  $\partial \sigma_{i} \in B_{q-1}(K_{i-1})$ corresponds to a birth of a degree-$q$ homology class and the case $\partial \sigma_{i} \notin B_{q-1}(K_{i-1})$ corresponds to a death of a degree-$(q-1)$ homology class by Lemma \ref{can-only-kill}. 
\end{proof}

\begin{definition}[Persistence pair]
Consider a simplex-wise filtration $(K_{i})_{i\in \{0,...,m\}}$. If $[\gamma] \in H_{q}(K_{i})$ was born at index $i$ and died at index $j$ then $(i,j)$ is said to be a degree-$q$ persistence pair. If there is no confusion as to what the value of $q$ is, sometimes we will refer to $(i,j)$ simply as a persistence pair. 
\end{definition}

\subsection{Vietoris-Rips complexes and filtrations}

In this section we briefly discuss one of the main structures of interest for this paper. Before doing so, we define a measure of size for a simplex.

\begin{definition}[Diameter of a simplex]
    Consider a finite point set $A \in \mathbb{R}^D$. Then the diameter $A$, denoted $\diam (A)$ is defined as $\max_{x,y\in A} d(x, y)$. For a simplex $\sigma = \langle x_{0}...x_{p}\rangle $, we have $\diam (\sigma) := \max_{i,j \in \{0,...,p\}}d(x_{i}, x_{j})$. 
\end{definition}

Vietoris-Rips complexes first appeared in \cite{vietoris1927hoheren} and were originally called Vietoris complexes. Eliyahu Rips applied the complex to the study of hyperbolic groups and Mikhail Gromov popularised the term Rips complex . The term Vietoris-Rips complex was coined by Hausmann in \cite{hausmann1994vietoris}.

\begin{definition}[Vietoris-Rips complex at scale $r$]
Let $X$ be a point cloud. For $r\in [0,\infty)$ we construct the Vietoris-Rips complex at scale $r$, $\Vr _{r} (X)$, as follows. If $\{x_0,...,x_p\} \subset X$ is such that $\diam(\{x_0,...,x_p\}) \leq r$ then $\langle x_0...x_p \rangle $ is a $p$-simplex in $\Vr_{r} (X)$. 
\end{definition}

In order to construct a filtration of simplicial complexes, we state an extremely easy to prove lemma without proof. 

\begin{lemma}
    \label{vr-is-actually-a-filtration}
    Let $X$ be a point cloud and let $0 \leq r_1 \leq r_2$. Then we have $\Vr_{r_1}(X) \subseteq \Vr_{r_2}(X)$. 
\end{lemma}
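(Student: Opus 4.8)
The plan is to unwind the definitions and observe that the only thing that changes between $\vr_{r_1}(X)$ and $\vr_{r_2}(X)$ is the diameter threshold, which only becomes more permissive as the scale grows. So I would argue by direct containment of the underlying simplex sets: take an arbitrary simplex of $\vr_{r_1}(X)$ and show it already satisfies the membership condition for $\vr_{r_2}(X)$.

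Concretely, first I would let $\sigma = \langle x_0 \ldots x_p \rangle$ be an arbitrary $p$-simplex in $\vr_{r_1}(X)$. By the definition of the Vietoris-Rips complex at scale $r_1$, this means $\{x_0,\ldots,x_p\} \subseteq X$ and $\diam(\{x_0,\ldots,x_p\}) \leq r_1$. Next, I would invoke the hypothesis $r_1 \leq r_2$ to chain the inequalities:
\[
\diam(\{x_0,\ldots,x_p\}) \leq r_1 \leq r_2 .
\]
Hence $\diam(\{x_0,\ldots,x_p\}) \leq r_2$, and since $\{x_0,\ldots,x_p\}$ is still a subset of $X$, the definition of $\vr_{r_2}(X)$ applied to this vertex set tells us $\langle x_0 \ldots x_p \rangle \in \vr_{r_2}(X)$. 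Since $\sigma$ was an arbitrary simplex of $\vr_{r_1}(X)$, every simplex of $\vr_{r_1}(X)$ lies in $\vr_{r_2}(X)$, which is exactly the assertion $\vr_{r_1}(X) \subseteq \vr_{r_2}(X)$.

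There is essentially no obstacle here: the statement is a one-line consequence of transitivity of $\leq$ together with the fact that the defining condition of a Vietoris-Rips complex is a diameter upper bound, which is monotone in the scale parameter. The only thing worth a sentence of care is that one should check the empty simplex and the vertices $\{x_i\}$ are handled — but these have diameter $0$ (or are vacuous), so they trivially satisfy the threshold at every scale and no separate argument is needed. This is why the paper states it without proof.
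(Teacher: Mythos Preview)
Your proof is correct and is exactly the direct verification the paper has in mind; the paper itself states this lemma without proof, calling it ``extremely easy to prove,'' so your argument simply fills in the one-line monotonicity check it omits.
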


\begin{definition}[Vietoris-Rips filtration]
    The Vietoris-Rips filtration on $X$ is the nested collection of spaces   $\Vr _{\bullet} (X) := \{\Vr_{r_1}(X)\subseteq \Vr_{r_2}(X)\}_{0 \leq r_1 \leq r_2}$. 
\end{definition}

\begin{remark}
    $\Vr_{\infty}(X)$ consists of the $(|X|-1)$-simplex spanning  all points of $X$ and all lower-degree faces. It is the simplicial complex built from the power set (i.e., the set of all subsets, denoted $2^{X}$) of $X$. 
\end{remark}

The Vietoris-Rips filtration is not a simplex-wise filtration, but is easily modified to be so. We follow the method described in \cite{Ripser}.  

We first extend the function $\psi: X \rightarrow \{1,...,|X|\}$ which indexes the vertices, to a function that labels each simplex in $\Vr_{\infty}(X)$. 

\begin{definition}[Extension of $\psi$]
\label{extention-of-psi}
    Given $\psi: X \rightarrow \{1,...,|X|\}$ we extend its domain and range to $\psi: \Vr_{\infty}(X)\rightarrow 2^{\{1,...,|X|\}}$ in the following fashion. Consider a simplex $\sigma = \langle x_{0}...x_{p}\rangle$, then $\psi(\sigma)$ is the set of vertex labels $\{\psi(x_{0}),...,\psi(x_{p})\}$.
\end{definition}

Next, we define a function that sorts the integer labels in an element of $2^{\{1,...,|X|\}}$ so they are listed in increasing order.

\begin{definition}
\label{sort-definition}
Let $A(n)$ be the set of \emph{ordered} subsets of $\{1,...,n\}$. 
That is, $S = \{s_1, \ldots, s_k\} \subset \{1,...,n\} $ is in $A(n)$ if and only if $s_1 < s_2 < \cdots < s_k$. 
We write $\sort(S)$ for the function that maps a set of integers to its ordered version. 
To shorten notation we will also write $\sort(\sigma)$ when we really mean $\sort(\psi(\sigma))$. 

\end{definition}

We now ``stretch out'' the Vietoris-Rips filtration and turn it into a simplex-wise filtration by using a length-lexicographic ordering on simplices with the same diameter. 

\begin{definition}
    \label{binary-relation-on-simplices}
    We define a binary relation $ <$ on $\Vr_{\infty}(X)$ as follows. 

\begin{itemize}
    \item $\sigma <\tau$ if $\diam (\sigma) < \diam(\tau)$. 

    \item If $\diam(\sigma) = \diam(\tau)$ then $\sigma < \tau$ if $\dim (\sigma) < \dim (\tau)$. 

    \item If $\diam(\sigma) = \diam(\tau)$ and $\dim(\sigma) = \dim(\tau)$ then $\sigma < \tau$ if $\sort (\sigma) <_{lex} \sort(\tau)$ according to lexicographical order, $<_{lex}$. 
\end{itemize}
\end{definition}

Recall lexicographical ordering on elements of $A(n)$ with the same cardinality is defined as follows. 
Given $S, T \in A(n)$ we have $S = \{s_1 < s_2 \cdots < s_k\}$ and $T = \{t_1 < t_2 \cdots < t_k\}$. Then $S <_{lex} T$ in lexicographic ordering if there is some $1 \leq j \leq k$ such that $s_i = t_i$ for $i < j$, and $s_j < t_j$.   

The following lemma can be readily verified as length-lexicographic ordering is known to be a total order for finite sequences. The fact that $<$ defines a total order will be used to define what will be called the ``simplex-wise Vietoris-Rips filtration". 

\begin{lemma}
    The binary relation $<$ given in Definition \ref{binary-relation-on-simplices} is a total order on $\Vr_{\infty}(X)$. 
\end{lemma}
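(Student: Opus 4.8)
The claim is that the binary relation $<$ of Definition \ref{binary-relation-on-simplices} is a total order on $\vr_{\infty}(X)$; since $\vr_{\infty}(X)$ is a finite set of simplices, it suffices to check that $<$ is irreflexive, transitive, and trichotomous (for any two distinct simplices $\sigma,\tau$, exactly one of $\sigma<\tau$, $\tau<\sigma$ holds). The plan is to decompose $<$ into its three-tiered definition and reduce each property to the corresponding property of the three orders being composed: the usual strict order $<$ on $\R$ (applied to $\diam$), the usual strict order $<$ on $\Z$ (applied to $\dim$), and $<_{lex}$ on $A(n)$ restricted to subsets of a fixed cardinality. The key observation is that $<$ is exactly the lexicographic composition of these three orders via the map $\sigma \mapsto (\diam(\sigma),\dim(\sigma),\sort(\sigma))$, so the whole statement follows from the general fact that a lexicographic product of total orders is a total order — but since this general fact is not cited in the excerpt, I would prove the three properties by hand.

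First I would record that $\diam(\cdot)$ and $\dim(\cdot)$ are well-defined $\R$- and $\Z$-valued functions on simplices, and that $\sort$ is a well-defined injection from the set of simplices of a fixed vertex-cardinality into $A(n)$ (injective because $\sort(\psi(\sigma))$ determines the vertex set $\psi(\sigma)$, which determines $\sigma$ since $\psi$ is a bijection). Irreflexivity: for any $\sigma$, none of the three clauses can apply with $\tau=\sigma$, because each clause demands a strict inequality ($<$ on $\R$, $<$ on $\Z$, or $<_{lex}$) in a coordinate where $\sigma$ and $\sigma$ agree, and each of those three base relations is irreflexive. Trichotomy: given distinct $\sigma,\tau$, compare first $\diam(\sigma)$ vs $\diam(\tau)$ (trichotomy of $<$ on $\R$); if they differ, exactly one of the first clauses fires. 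If they are equal, compare $\dim(\sigma)$ vs $\dim(\tau)$ (trichotomy of $<$ on $\Z$); if they differ, exactly one of the second clauses fires. If both $\diam$ and $\dim$ agree, then $\sigma$ and $\tau$ have the same cardinality, so $\sort(\sigma),\sort(\tau) \in A(n)$ are comparable under $<_{lex}$, which is a total order on subsets of fixed cardinality; and $\sort(\sigma)\neq\sort(\tau)$ because $\sigma\neq\tau$ and $\sort$ is injective on fixed-cardinality simplices, so by trichotomy of $<_{lex}$ exactly one of $\sigma<\tau$, $\tau<\sigma$ holds. One must also note that the three cases are mutually exclusive by construction (the clauses are guarded by the equalities $\diam(\sigma)=\diam(\tau)$ and $\dim(\sigma)=\dim(\tau)$), so no two clauses can simultaneously declare $\sigma<\tau$ and $\tau<\sigma$.

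Transitivity is the step that requires the most care, since it involves a three-way case split on which clause governs each of $\sigma<\tau$ and $\tau<\rho$. Given $\sigma<\tau$ and $\tau<\rho$, I would argue on $\diam$ first: if $\diam(\sigma)<\diam(\tau)$ or $\diam(\tau)<\diam(\rho)$, then combined with $\diam(\sigma)\le\diam(\tau)\le\diam(\rho)$ (which always holds given $\sigma<\tau<\tau<\rho$) we get $\diam(\sigma)<\diam(\rho)$, hence $\sigma<\rho$ by the first clause. Otherwise $\diam(\sigma)=\diam(\tau)=\diam(\rho)$, so both relations are governed by clause two or three; now repeat the argument with $\dim$ in place of $\diam$, and if $\dim$ is constant across all three as well, finish using transitivity of $<_{lex}$ on the common-cardinality sets $\sort(\sigma),\sort(\tau),\sort(\rho)$. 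The main obstacle is purely bookkeeping: making sure every combination of governing clauses is covered and that the ``$\le$ always holds'' reductions are stated cleanly so the case analysis collapses to three applications of transitivity of the base orders rather than nine separate sub-cases. No genuinely hard mathematics is involved; the length-lexicographic order on finite sequences being a total order is exactly the input, and the only thing to verify is that Definition \ref{binary-relation-on-simplices} really is that order pulled back along $\sigma\mapsto(\diam(\sigma),\dim(\sigma),\sort(\sigma))$ with the pullback map injective on $\vr_{\infty}(X)$.
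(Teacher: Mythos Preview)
Your proposal is correct and, in fact, considerably more detailed than what the paper does: the paper does not give a proof of this lemma at all, merely remarking that it ``can be readily verified as length-lexicographic ordering is known to be a total order for finite sequences.'' Your argument is exactly the verification the paper gestures at, carried out in full --- you identify $<$ as the pullback of a lexicographic product of three total orders along the injective map $\sigma\mapsto(\diam(\sigma),\dim(\sigma),\sort(\sigma))$ and check the axioms by hand.
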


\begin{definition}[Simplex-wise Vietoris-Rips filtration]
\label{def-VR-total-order}
Let $X$ be a point cloud. Suppose there are $m$ simplices in $\Vr _{\infty} (X)$. We use the total order $<$ on $\Vr_{\infty}(X)$ to construct a bijection $\phi:\Vr_{\infty}(X) \rightarrow \{1,...,m\}$ by mapping the lowest element according to $<$ to $1$, the next lowest element to $2$ and so on. The filtration $(K_{i})_{i \in \{0,...,m\}}$ where $K_{0} = \emptyset$ and $K_{i} = \cup_{j=1}^{i}\phi ^{-1}(j)$ for $i>0$ is referred to as the simplex-wise Vietoris-Rips filtration on $X$.
\end{definition}

\begin{remark}
    It is customary to write $\phi^{-1}(j)$ as $\sigma_j$. Thus we write $K_{i} = \cup_{j=1}^{i}\sigma_j$
\end{remark}

\begin{definition}[Birth and death values for  Vietoris-Rips filtrations]
Consider the collection $\Vr_{\bullet}(X)$ and its corresponding simplex-wise filtration. Let $(i,j)$ be a persistence pair for the simplex-wise filtration. Then $\mathrm{diam}(\sigma_i)$ is said to be the birth value of the homology class that is born when $\sigma_i$ is added and $\mathrm{diam}(\sigma_j)$ is said to be the death value of this homology class. 
\end{definition}

\begin{definition}[Persistence]
Consider $\Vr_{\bullet}(X)$ with its corresponding simplex-wise filtration. Let $(i,j)$ be a persistence pair for the simplex-wise filtration. Then the persistence of $(i,j)$ is defined as $\mathrm{diam}(\sigma_j) - \mathrm{diam}(\sigma_i)$.
\end{definition}

Note that the persistence may be zero. In this case we say that the persistence pair $(i,j)$ has trivial persistence.

\begin{definition}[Persistence barcode, $\ph{q}(X)$]
Consider a point cloud $X$. The multiset of all left-closed, right-open intervals $\left[\mathrm{diam}(\sigma_i), \mathrm{diam}(\sigma_j)\right)$ such that $(i,j)$ is a degree-$q$ persistence pair of the simplex-wise Vietoris-Rips filtration with non-trivial persistence is called the degree-$q$ Vietoris-Rips persistence barcode of $X$. We will denote it by $\ph{q}(X)$.

\end{definition}

The persistence barcode may be visualised by drawing each interval stacked above the real line as in Figure \ref{circle_of_circle_diag}. 
An alternative visualisation is the persistence diagram, where the points $(\mathrm{diam}(\sigma_i), \mathrm{diam}(\sigma_j))$ are plotted on cartesian axes for each persistence pair $(i,j)$.

\begin{figure}
\begin{subfigure}[t]{.5\textwidth}
    \centering
    \includegraphics[scale = 0.5, clip,trim=4cm 9cm 4cm 9cm]{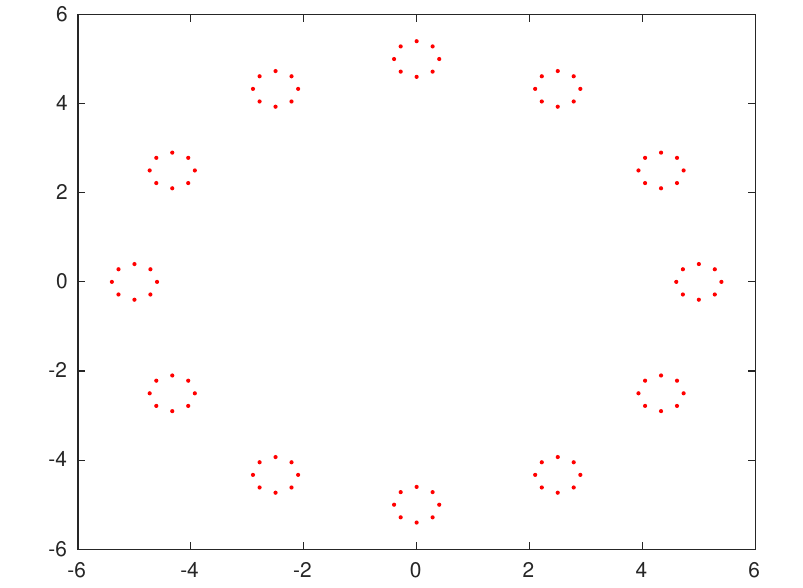}
    \subcaption{}
    
\end{subfigure}%
\begin{subfigure}[t]{.5\textwidth}
    \centering
\begin{tikzpicture}

\draw [|->] (0,0) -- (6.5,0);
\foreach \i in {0,2,4,6,8,10,12}
    \node [below] at (0.5*\i,0) {\i};

\foreach \i in {1,...,12}
    \draw [thick,red] (0.155,0.2*\i)--(0.4,0.2*\i);

\draw [thick, red] (0.9, 3.0) -- (4.6,3.0);
\end{tikzpicture}
 \subcaption{}
  
    \end{subfigure}
\caption{(a) A point cloud consisting of points on many small circles placed around a larger circle together with (b) its degree-1 persistence barcode. }
\label{circle_of_circle_diag}
\end{figure}

Throughout this paper we focus on the case $q=1$. We will sometimes say we are ``calculating the persistent homology of $X$'' and it should be understood that we mean we are calculating degree-$1$ persistent homology of the Vietoris-Rips filtration of $X$.

\subsection{The matrix reduction algorithm for computing persistent homology}
\label{matrix-reduction-algorithm}

In this section, we briefly review the standard matrix reduction algorithm for computing the persistent homology of a simplex-wise filtration. The reader wishing for more details and generality is directed towards \cite{book}. 

We start by defining the boundary matrix $\partial$. If there are $m$ simplices in the filtration then the boundary matrix will be an $m \times m$ matrix with the $i$th column encoding the boundary of $\sigma_i$.  If $\sigma_i$ is a $q$-simplex, then column-$i$ of $\partial$ has a 1 in each row corresponding to the index of a $(q-1)$-simplex that is a face of $\sigma_i$.  
The \emph{lowest 1} of a column is the entry with largest row-index. The filtration ordering ensures this row index is smaller than the column index.

\begin{figure}[h!]
    \centering
    \begin{tikzpicture}
        % \draw[very thin] (0,0) grid (13,18);

        % gridlines horizontal
        \draw[thin, gray] (0,4) -- (12,4); 
        \draw[thin, gray] (0,9) -- (12,9);
        \draw[thin, gray] (0,14) -- (12,14);
        \draw[thin, gray] (0,-0.5) -- (12,-0.5);
        \draw[thin, gray] (0,19) -- (12,19);
        \draw[thin, gray] (0,-0.5) -- (0,19);
        \draw[thin, gray] (12,-0.5) -- (12,19);
        \draw[thin, gray] (6,-0.5) -- (6,19);
        % gridlines vertical 
        % \draw[thin, gray]

        \draw[thick, fill=red] (7,1) -- (10,3) -- (11,1) -- (7,1);
        \draw[fill] (7,1) circle [radius=0.10];
        \node [below right] at (7,1) {$3$};
        \draw[fill] (11,1) circle [radius=0.10];
        \node [below right] at (11,1) {$2$};
        \draw[fill] (10,3) circle [radius=0.10];
        \node [above right] at (10,3) {$1$};

        \draw[thick] (1,1) -- (4,3) -- (5,1) -- (1,1);
        \draw[fill] (1,1) circle [radius=0.10];
        \node [below right] at (1,1) {$3$};
        \draw[fill] (5,1) circle [radius=0.10];
        \node [below right] at (5,1) {$2$};
        \draw[fill] (4,3) circle [radius=0.10];
        \node [above right] at (4,3) {$1$};

        \draw[thick] (4,8) -- (5,6);
        \draw[fill] (1,6) circle [radius=0.10];
        \node [below right] at (1,6) {$3$};
        \draw[fill] (5,6) circle [radius=0.10];
        \node [below right] at (5,6) {$2$};
        \draw[fill] (4,8) circle [radius=0.10];
        \node [above right] at (4,8) {$1$};

        \draw[thick] (7,6) -- (10,8) -- (11,6);
        \draw[fill] (7,6) circle [radius=0.10];
        \node [below right] at (7,6) {$3$};
        \draw[fill] (11,6) circle [radius=0.10];
        \node [below right] at (11,6) {$2$};
        \draw[fill] (10,8) circle [radius=0.10];
        \node [above right] at (10,8) {$1$};

        \draw[fill] (7,11) circle [radius=0.10];
        \node [below right] at (7,11) {$3$};
        \draw[fill] (11,11) circle [radius=0.10];
        \node [below right] at (11,11) {$2$};
        \draw[fill] (10,13) circle [radius=0.10];
        \node [below right] at (10,13) {$1$};

        \draw[fill] (5,11) circle [radius=0.10];
        \node [below right] at (5,11) {$2$};
        \draw[fill] (4,13) circle [radius=0.10];
        \node [below right] at (4,13) {$1$};

        \draw[fill] (10,18) circle [radius=0.10];
        \node [below right] at (10,18) {$1$};

        \node [right, blue] at (3,10) {$K_2$};
        \node [blue] at (3,15) {$K_{0} = \emptyset$};
        \node [right, blue] at (9,10) {$K_3$};
        \node [right, blue] at (9,15) {$K_1$};

        \node [right, blue] at (3,5) {$K_4$};
        \node [right, blue] at (9,5) {$K_5$};
        \node [right, blue] at (3,0) {$K_6$};
        \node [right, blue] at (9,0) {$K_7$};
        
    \end{tikzpicture}
    \caption{A simplex-wise filtration of a triangle. Each complex in the filtration differs from the previous one by a single simplex}\label{fig:simplex-wise}
\end{figure}

\newpage
\begin{example}
In this example we consider the simplex-wise filtration depicted in Figure \ref{fig:simplex-wise}. We have labelled the 0-simplices with the integers $1, 2, 3$. 
The 1-simplices are added in the order $\langle 12 \rangle, \langle 13 \rangle, \langle 23 \rangle$ and the 2-simplex $\langle 123 \rangle$ is added last. 

For this filtration the corresponding boundary matrix $\partial$ is given by 

\begin{equation}
\partial = 
\begin{bmatrix}
0 & 0 & 0 & 1 & 1 & 0 & 0\\
0 & 0 & 0 & 1 & 0 & 1 & 0\\
0 & 0 & 0 & 0 & 1 & 1 & 0\\
0 & 0 & 0 & 0 & 0 & 0 & 1\\
0 & 0 & 0 & 0 & 0 & 0 & 1\\
0 & 0 & 0 & 0 & 0 & 0 & 1\\
0 & 0 & 0 & 0 & 0 & 0 & 0\\
\end{bmatrix}
\end{equation}

Note that in column 7, corresponding to the addition of the 2-simplex, we have non-zero entries in rows 4,5 and 6, corresponding to the 1-simplices which form the boundary of the 2-simplex. 
\end{example}

To find the persistence barcode from the boundary matrix $\partial$ we reduce each column of $\partial$ in order of increasing $i$, but only  allow the addition of columns with smaller index. 
When we reduce $\partial$ in this way we obtain a matrix $R$ which has the property that no two columns have their lowest 1 in the same row. From $R$, we can extract the persistence pairs of the simplex-wise filtration. 

\begin{lemma}[Pairing lemma \cite{book}]
Consider the boundary matrix $\partial$ for a simplex-wise filtration and let $R$ be the result of reducing $\partial$. Suppose column $j$ has its lowest 1 in row $i$. Then $(i,j)$ is a persistence pair. 
\end{lemma}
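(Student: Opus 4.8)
The plan is to exhibit a single homology class that is born at index $i$ and dies at index $j$; by the paper's definition of a persistence pair, that is all that needs to be shown. Write $q = \dim\sigma_j$. I would begin by recording three facts about the reduction, each short to verify. (a) Every reduced column is \emph{homogeneous}: the reduction only ever adds to a column other columns whose lowest $1$ lies in the \emph{same} row, hence in a row indexing a simplex of the same dimension, so by induction on the column index $R_j$ is a $(q-1)$-chain and therefore $\dim\sigma_i = q-1$. (b) Unrolling the column operations, $R_j = \partial c_j$ for a chain $c_j = \sigma_j + \sum_{k\in A}\sigma_k$ with $A\subseteq\{1,\dots,j-1\}$ a set of indices of $q$-simplices; in particular $c_j \in C_q(K_j)$. (c) For every $\ell$ one has $B_{q-1}(K_\ell) = \mathrm{span}\{\,R_k : k\le\ell,\ \dim\sigma_k = q,\ R_k\neq 0\,\}$, because the column operations producing $R_1,\dots,R_\ell$ only involve columns with index $\le\ell$ and so preserve their span, while homogeneity makes them respect the dimension grading of the chain complex.

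Next I would identify the class and check it is born at $i$. Since $R_j = \partial c_j$ it is a $(q-1)$-cycle, and by definition of the lowest $1$ its highest-index simplex is $\sigma_i$; reading $\partial R_j = 0$ off then gives $\partial\sigma_i \in B_{q-2}(K_{i-1})$, so Lemma~\ref{can-only-birth} says $\sigma_i$ gives birth to a degree-$(q-1)$ class, for which I would take $[R_j]\in H_{q-1}(K_i)$ as a representative. That $[R_j]$ is genuinely born at $i$ follows by contradiction: if $[R_j] = f_{i-1}^i([\beta])$ with $\beta$ supported in $K_{i-1}$, then $R_j - \beta \in B_{q-1}(K_i)$ would be an element of a boundary group containing $\sigma_i$ — impossible, since any $q$-simplex having $\sigma_i$ as a face has diameter at least $\diam(\sigma_i)$ and strictly larger dimension, hence strictly larger index than $i$ by Definition~\ref{binary-relation-on-simplices}, so it is not in $K_i$.

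For the death I would show the class dies no later than $j$ and then exclude every earlier index. It dies by $j$ because $c_j\in C_q(K_j)$ forces $R_j\in B_{q-1}(K_j)$, i.e.\ $f_i^j([R_j]) = 0 = f_0^j([0])$, so by Definition~\ref{definition-of-death} the death index is at most $j$; and it exceeds $i$ since $[R_j]$ is born at $i$. Suppose it died at some $\ell$ with $i<\ell<j$. By Definition~\ref{definition-of-death} there is $\beta\in Z_{q-1}(K_{i-1})$ with $R_j-\beta\in B_{q-1}(K_\ell)$, so by (c), $R_j = \beta + \sum_{k\in S}R_k$ with $S\subseteq\{\,k\le\ell : \dim\sigma_k = q,\ R_k\neq 0\,\}$; note $k<j$ for all $k\in S$. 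Now $R_j$ has no nonzero entry in any row $>i$ and $\beta$ is supported in rows $<i$, so $\sum_{k\in S}R_k$ has no nonzero entry in any row $>i$. Since the $R_k$ with $k\in S$ have pairwise distinct lowest ones, none equal to $i$ (both facts from reducedness of $R$ and $\mathrm{low}(R_j) = i$, $k\neq j$), the one among them with the largest lowest one keeps that entry in the sum, forcing its lowest one below $i$ and hence all of them below $i$. Then $R_j = \beta + \sum_{k\in S}R_k$ is supported in $K_{i-1}$, contradicting that $\sigma_i$ appears in $R_j$. So the death index is exactly $j$, and $(i,j)$ is a persistence pair.

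The step I expect to be the real obstacle is fact (c): that after reduction the surviving columns still span exactly the degree-$(q-1)$ boundary group of each $K_\ell$. This identity is the bridge that turns the purely combinatorial ``distinct lowest ones'' structure of the reduced matrix into the homological statement about when cycles become boundaries; once it is in hand, the rest is routine cycle-chasing. A secondary point worth keeping in mind is that the paper's definition of persistence pair asks only for \emph{some} class born at $i$ to die at $j$, so it is enough to follow the single explicit cycle $R_j$ and never necessary to analyse the other classes $\sigma_i$ might create.
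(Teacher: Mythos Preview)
The paper does not give its own proof of this lemma; it is stated with a citation to \cite{book} and used as a known result. There is therefore nothing in the paper to compare your argument against. That said, your proof is essentially correct and follows the standard route: facts (a)--(c) set up exactly the right scaffolding, and you are right that (c) is the hinge, since it converts the combinatorial ``distinct lowest ones'' structure of $R$ into a statement about boundary groups of the truncated filtration.

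One small over-specification is worth flagging. When you argue that no $q$-simplex in $K_i$ can have $\sigma_i$ as a face, you invoke Definition~\ref{binary-relation-on-simplices}, which is specific to the Vietoris--Rips ordering, whereas the lemma is stated for an arbitrary simplex-wise filtration. The fully general (and simpler) reason is already built into Definition~\ref{def-simplex-wise-filtrations}: for $(K_i)$ to be a filtration of simplicial complexes, every proper face of a simplex must enter strictly before the simplex itself, so any coface of $\sigma_i$ has index strictly greater than $i$. With that substitution your argument goes through verbatim for general simplex-wise filtrations, matching the scope of the lemma as stated.
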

Note that in terms of the filtration $(i,j)$ being a persistence pair means that $\sigma_{j}$ killed the homology class which was born when $\sigma_{i}$ was added to the filtration. 

Some persistence pairs can be obtained from $\partial$ without the need for any reduction. In \cite{Ripser}, these are referred to as apparent pairs and a method to identify them was used to speed up the computation of Vietoris-Rips persistent homology. 

\begin{definition}[Apparent pair]
\label{apparent-pair}
Suppose that $(i,j)$ is a degree-$q$ persistence pair which satisfies the following conditions: 
\begin{itemize}
\item $\sigma_{i}$ is the $q$-simplex face of $\sigma_{j}$ which is last added to the filtration, 
\item $\sigma_{j}$ is the $(q+1)$-simplex coface of $\sigma_{i}$ which is first added to the filtration.
\end{itemize}
Then, following \cite{Ripser}, we call $(i,j)$ an apparent pair. 
\end{definition}

Indeed, if $\sigma_{i}$ is the last added face of $\sigma_{j}$ then this means that the lowest 1 in the $j$th column is at row $i$. The second condition means that $\sigma_{j}$ is the first coface to be added which has $\sigma_{i}$ as a face meaning that no column to the left of column $j$ will have its lowest 1 in row $i$. 

It should be noted that this method to calculate persistence barcodes applies to all degrees of homology. However, for Vietoris-Rips filtrations the matrix $\partial$ would be enormous. 
Since we are only interested in computing $\ph{1}(X)$ we only need columns which correspond to 2-simplices and rows which correspond to 1-simplices. This means a boundary matrix for computing $\ph{1}(X)$ has $O(n^3)$ columns and $O(n^2)$ rows. 
Note also that for Vietoris-Rips filtrations, some persistence pairs $(i,j)$ will correspond to homology classes with zero persistence, when $\textrm{diam}(\sigma_{i}) = \textrm{diam}(\sigma_{j})$. 
Apparent pairs are particularly useful in computing $\ph{1}(X)$ because they help us identify those persistence pairs which have trivial persistence. The following is Theorem 3.10 from \cite{Ripser}.

\begin{theorem}
\label{unique-distances-lemma}
Consider a point cloud $X$ with unique pairwise distances. Then the degree-1 apparent pairs for the Vietoris-Rips filtration $\Vr _{\bullet} (X)$ are precisely the persistence pairs with trivial persistence. 
\end{theorem}

%%%%%%%%%%%%%%%%%%%%%%%%%%%%%%%%%%%%%%%%%%%%%%%
\section{The Reduced Vietoris-Rips Complex}
\label{RVR-definition}

In this section we cover a new result on degree-1 Vietoris-Rips persistent homology of a point cloud $X$. The result proved here will be used extensively in later sections. The main benefit is that we reduce the number of 2-simplices to be examined from $O(n^3)$ to $O(n^2)$.

\subsection{Construction of the Reduced Vietoris-Rips Complex}

We first introduce the notion of the lune of a $1$-simplex. It differs from the geometric definition of the lune (given in Definition \ref{definition-traditional-lune}) by depending on the total order $<$ rather than geometry. 

\begin{definition}[Lune]
\label{def-lune}
Consider two points $y$ and $z$ in a point cloud $X$. The lune of a 1-simplex $\langle yz \rangle $, denoted by $\lune (\langle yz \rangle)$ is defined as the subset 
\begin{equation}
\{ x \in X \;|\; \langle yx \rangle < \langle yz \rangle \text{ and } \langle zx \rangle < \langle yz \rangle \}
\end{equation} 
\end{definition}

Recall that the symbol `$<$' refers to the total order for the simplices in the filtration as per Definition~\ref{binary-relation-on-simplices}. 
It should be noted that if one assumes pairwise unique distances then the above reduces to the geometric definition of the lune given by Toussaint in \cite{ToussaintRNG} and is repeated in Definition \ref{definition-traditional-lune}.  Note that in \cite{ToussaintRNG} the lune is only defined for point clouds with unique pairwise distances. 

\begin{definition}[Geometric definition of the lune]
\label{definition-traditional-lune}
    Consider a point cloud $X$ where $X$ has unique pairwise distances. Then for a given 1-simplex $\langle yz \rangle$ we define $\lune (\langle yz \rangle)$ as the following subset of $X$. 
    \begin{equation}
    \{ x \in X \;|\; d(x,y) < d(y,z) \text{ and } d(x,z) < d(y,z)\}
    \end{equation} 
    
\end{definition}

We can visualise the geometric definition of the lune using the intersection of two open balls of radius $d(y,z)$ centred at $y$ and $z$. This is shown in Figure \ref{lune-diagram}. It should be noted that with our definition of the lune there may be points of $X$ on the boundary of this region. 

\begin{figure}
    \centering
    \begin{tikzpicture}
        \draw[red] (-1,0) circle [radius=2];
        \draw[red, fill] (-1,0) circle [radius = 0.05];
        \draw[red] (1,0) circle [radius = 2];
        \draw[red, fill] (1,0) circle [radius = 0.05];
        \draw[red] (-1,0) -- (1,0);
        \node [below left] at (-1,0) {$y$};
        \node [below right] at (1,0) {$z$};
        \draw [blue, fill] (0.5, 0.5) circle [radius = 0.05];
        \node [left] at (0.5,0.5) {$x$};
        \draw [blue, fill] (2, 0.5) circle [radius = 0.05];
        \node [left] at (2,0.5) {$w$};
    \end{tikzpicture}
    \caption{ Depiction of the ``traditional" lune of $\langle yz \rangle$. Points of $X$ contained in the intersection of the two balls are in $\lune (\langle yz \rangle)$. In this figure, $x \in \lune (\langle yz \rangle) $ and $ w \notin \lune (\langle yz \rangle)$.}
    \label{lune-diagram}
\end{figure}

We quantify structure in this subset by its connectivity. 

\begin{definition}[Connected Components of a Lune]
\label{def-connected-components-of-lune}
Let $V = \lune (\langle yz \rangle)$ for two points $y,z \in X$, and construct a graph on $V$ by joining two points $p, q \in V$ if $\langle pq \rangle  < \langle yz \rangle$. Suppose this graph has $c$ connected components, then we say that the $\lune (\langle yz \rangle)$ has $c$ connected components. 

\end{definition}

In Section~\ref{Algorithm}, we need to quickly determine if a lune has only one connected component. To facilitate this, we introduce the lens of a lune. 

\begin{definition}[Lens]
\label{lens}
Consider a point cloud $X$. Let $\langle yz \rangle$ be a 1-simplex. Then we define the $\lens (\langle yz \rangle)$ to be all points $x \in X$ such that $\Angle (yxz) > \frac{5\pi}{6}$. In Figure \ref{lens-diagram-after-definition}, the red region shows which points of $X$ would be contained in $\lens(\langle yz \rangle)$. 
\end{definition}

\begin{figure}[h!]
    \centering
        \begin{tikzpicture}[scale = 0.7]
            % \draw[very thin] (-5,-3) grid (5,7);  
            \draw [black] (3,0) arc [radius = 6, start angle = 0, end angle = 60];
            \draw [black] (-3,0) arc [radius = 6, start angle = 180, end angle = 120];
            \draw [black] (-3,0) arc [radius = 6, start angle = 180, end angle = 240];
            \draw [black, fill] (0,0) circle [radius = 0.25];
            \draw [black] (3,0) arc [radius = 6, start angle = 360, end angle = 300];
            \draw [black] (3,0) arc [radius = 6, start angle = 60, end angle = 120];
            \draw [black] (-3,0) arc [radius = 6, start angle = 240, end angle = 300];
            \draw [black] (-3,0) -- (3,0);
            \node [below left] at (-3,0) {$z$};
            \node [below right] at (3,0) {$y$};
            \draw [black, fill= red] (-3,0) to [out=30,in=150] (3,0) to [out =210, in= 330] (-3,0); 
            \node [right] at (-0.6, -0.2) {$x$};
            \draw [black,fill = black] (-0.6,-0.2) circle [radius = 0.1];
            \node [right] at (1,2) {$w$};
            \draw [black,fill = black] (1,2) circle [radius = 0.1];
    \end{tikzpicture}
    \caption{Points of $X$ that lie in the red region are contained in $\lens (\langle yz \rangle)$. Here, $x\in \lens(\langle yz \rangle)$ and $w\notin \lens(\langle yz \rangle)$. Note that the red region itself is not $\lens (\langle yz \rangle)$! 
    }
    \label{lens-diagram-after-definition}
\end{figure}

Using this definition of the lens of a $1$-simplex we give a lemma that provides a sufficient condition for the lune to have a single connected component.

\begin{lemma}
\label{lens-lemma}
Let $X \subset \mathbb{R}^{D}$ be a Euclidean point-cloud. Consider a 1-simplex $\langle yz \rangle $. If there is a point that $x$ such that $\Angle (yxz) > \frac{5\pi}{6}$ then $\lune (\langle yz \rangle)$ consists of only one connected component. 
\end{lemma}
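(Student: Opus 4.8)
The plan is to prove a stronger statement: the hypothesised point $x$ is a vertex of the lune graph (Definition~\ref{def-connected-components-of-lune}) that is adjacent to \emph{every} other vertex. Since $\lune(\langle yz\rangle)$ is then nonempty and its graph has such a universal vertex, it has exactly one connected component. Write $R = d(y,z)$. Two things must be verified: (i) $x \in \lune(\langle yz\rangle)$; and (ii) for every other $w \in \lune(\langle yz\rangle)$ one has $\langle xw\rangle < \langle yz\rangle$.

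Claim (i) is immediate: since $\Angle(yxz) > \frac{5\pi}{6}$, the other two angles of the (possibly degenerate) triangle $yxz$ sum to less than $\frac{\pi}{6}$, so the angle at $x$ is strictly the largest, hence the opposite side $yz$ is strictly the longest. Thus $d(x,y) < R$ and $d(x,z) < R$, and comparing diameters in Definition~\ref{binary-relation-on-simplices} gives $\langle yx\rangle < \langle yz\rangle$ and $\langle zx\rangle < \langle yz\rangle$, i.e. $x \in \lune(\langle yz\rangle)$. For claim (ii) it suffices to establish the purely geometric fact: \emph{for every $w \in \R^D$ with $d(w,y) \le R$ and $d(w,z) \le R$, one has $d(w,x) < R$.} Granting this, any $w \in \lune(\langle yz\rangle)$ has $d(w,y) \le R$ and $d(w,z) \le R$ (immediate from $\langle wy\rangle < \langle yz\rangle$, $\langle wz\rangle < \langle yz\rangle$, and Definition~\ref{binary-relation-on-simplices}), so $d(w,x) < R = d(y,z)$, whence $\diam(\langle xw\rangle) < \diam(\langle yz\rangle)$ and therefore $\langle xw\rangle < \langle yz\rangle$; thus $x$ and $w$ are joined in the lune graph.

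To prove the geometric fact I would maximise the convex function $w \mapsto d(w,x)^2$ over the convex compact ``lune region'' $L = \overline{B(y,R)} \cap \overline{B(z,R)}$; the maximum is attained on $\partial L$, and not at $x$ (which lies in the interior of $L$ by claim (i)). Suppose the maximiser $w^\ast$ lay on only one of the two bounding spheres, say $d(w^\ast,y) = R$ and $d(w^\ast,z) < R$. Then the first-order condition forces $w^\ast = y + R\,\frac{y-x}{d(y,x)}$, the point of the sphere $\partial B(y,R)$ farthest from $x$. But $\Angle(yxz)$ is obtuse, so $(y-x)\cdot(y-z) > 0$, and expanding $d(w^\ast,z)^2 = d(y,z)^2 + 2R\,\frac{(y-z)\cdot(y-x)}{d(y,x)} + R^2 > 2R^2$ shows $d(w^\ast,z) > R$, contradicting $w^\ast \in L$. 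Hence $d(w^\ast,y) = d(w^\ast,z) = R$. Finally, working in the plane spanned by $y,z,x$ (if these are collinear, $x$ lies on the open segment $yz$ and $d(w,x) < R$ for $w \in L$ follows directly from convexity), such a point $w^\ast$ is one of the two ``tips'' of $L$; the locus $\{p : \Angle(ypz) = \frac{5\pi}{6}\}$ is a pair of circular arcs whose supporting circles have radius exactly $R$ and are centred precisely at those two tips, so the hypothesis $\Angle(yxz) > \frac{5\pi}{6}$ says exactly that $x$ lies strictly inside both of these disks, i.e. $d(x,w^\ast) < R$. (Alternatively, $d(w^\ast,x) < R \iff \Angle(yxz) > \frac{5\pi}{6}$ can be checked directly with the law of cosines.)

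I expect the geometric fact to be the main obstacle, and within it two points: rigorously excluding the one-sphere maximiser in $\R^D$ rather than merely in the plane of $y,z,x$; and confirming that $\frac{5\pi}{6}$ is exactly the angle threshold that places the two tips of $L$ within distance $R = d(y,z)$ of $x$, a smaller bound being insufficient.
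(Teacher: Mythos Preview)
Your proof is correct and follows the same high–level strategy as the paper (show that $x$ is adjacent to every other vertex of the lune graph), but the geometric core is argued differently. The paper fixes an arbitrary $w$ in the lune, rotates $x$ about the line $yz$ into the plane of $y,z,w$ so as to maximise $d(x,w)$, and then observes that both the rotated $x$ and $w$ lie in the Reuleaux triangle on $y,z,w'$ (where $w'$ is the tip on the $w$ side); since a Reuleaux triangle of side $r$ has diameter $r$, this gives $d(x,w)<r$. You instead fix $x$ and run a constrained optimisation over $w\in L$, using Lagrange multipliers to force the maximiser onto the $(D-2)$–sphere $C=\{d(\cdot,y)=d(\cdot,z)=R\}$, and then invoke the inscribed–angle characterisation of the $5\pi/6$ locus to bound $d(x,w^\ast)$. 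Both routes are valid; yours is more analytic and makes the role of the threshold $5\pi/6$ completely explicit, while the paper's is more synthetic but leaves both the rotation step and the Reuleaux diameter claim unproven.

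Regarding your two flagged concerns: the one–sphere exclusion you wrote already works verbatim in $\R^D$ (it is pure vector algebra). The remaining point—why $w^\ast\in C$ lies in the plane of $y,z,x$—is the only genuine gap, and it closes easily: project $x$ onto the perpendicular–bisector hyperplane $H$ of $yz$ to get $x'$; since $x'=x+\lambda(y-z)$ for some scalar $\lambda$, it lies in the affine span of $y,z,x$, and the farthest point of the sphere $C\subset H$ from $x$ is $m+\rho\,\tfrac{m-x'}{|m-x'|}$ (with $m=\tfrac{y+z}{2}$, $\rho=R\sqrt{3}/2$), which is again in that span. Hence $w^\ast$ is one of the two planar tips, and your inscribed–angle argument finishes the proof.
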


\begin{proof}

Before we begin the proof observe that for $w\in \lune(yz)$, $\Angle(ywz) \geq \frac{\pi}{3}$ because $\langle yz \rangle$ is the longest edge of the triangle $\langle w y z \rangle$. If $x$ is the only point in $\lune(\langle yz \rangle)$ then we are done. Otherwise let $w$ be another point in $\lune(\langle yz \rangle)$. Four points $x,y,z,w \in R^D$ span at most a three dimensional subspace so without loss of generality we assume $x,y,z,w\in \mathbb{R}^3$. We first show that the three dimensional case can always be reduced to the two dimensional. Note if $x$ lies on $\langle yz \rangle$ then $x,y,z,w$ can be viewed as being on the same plane so consider the case when $x$ does not lie on  $\langle yz \rangle$. 

Construct a new point $w'$ by rotating the triangle $\langle ywz\rangle$ about an axis along $\langle yz\rangle$ until it lies in the plane containing $x,y$ and $z$, such that $w'$ lies on the other side of $\langle yz\rangle$ than $x$. Let $p$ be the point in the intersection of $\langle yz\rangle$ and $\langle xw'\rangle$. This construction is illustrated in Figure \ref{fig:lens-test-proof-picture-new}.

\begin{figure}\label{fig:rotated w}
\begin{center}
\begin{tikzpicture}[scale=1.6, line join=round, line cap=round]

  % Plane
  \fill[blue!6]  (-2.5,-1.4) -- (4.7,-1.4) -- (5.5,1.05) -- (-1.7,1.05) -- cycle;
  \draw[blue!45] (-2.5,-1.4) -- (4.7,-1.4) -- (5.5,1.05) -- (-1.7,1.05) -- cycle;

  % Coordinates
  % Plane point (a,b)  -> screen (a + 0.5 b, 0.35 b)
  % 3D point (a,b,c)   -> screen (a + 0.5 b, 0.35 b + c)
  \coordinate (y)  at (0, 0);
  \coordinate (z)  at (3.5, 0);
  \coordinate (x)  at (0.5, -0.7);     % plane (1.5, -2): foreground
  \coordinate (wp) at (3.25, 0.525);   % plane (2.5, 1.5): background
  \coordinate (p)  at (2.071, 0);      % line xw' meets line yz, in plane
  \coordinate (w)  at (2.875, 1.562);  % w' rotated ~60 deg about line yz

  % In-plane edges
  \draw[thick]             (y) -- (z);
  \draw[thick, tick]       (y)  -- (wp);
  \draw[thick, twoticks]   (wp) -- (z);
  \draw[thick]             (x) -- (p);          % x--p--w' is one line
  \draw[thick, threeticks] (p) -- (wp);

  % Out-of-plane edges
  \draw[thick, tick]       (y) -- (w);
  \draw[thick, twoticks]   (z) -- (w);
  \draw[thick, threeticks] (p) -- (w);

  % Foreground edges of triangle xyz
  \draw[thick] (x) -- (y);
  \draw[thick] (x) -- (z);

  % Vertices
  \foreach \pt in {y, z, x, wp, w, p}
    \fill (\pt) circle (1.5pt);

  % Labels
  \node[left]        at (y)  {$y$};
  \node[right]       at (z)  {$z$};
  \node[below]       at (x)  {$x$};
  \node[above right] at (wp) {$w'$};
  \node[above]       at (w)  {$w$};
  \node[above left]  at (p)  {$p$};

\end{tikzpicture}
\end{center}
\caption{Reducing the proof to the case of four points in $\R^2$.}
\label{fig:lens-test-proof-picture-new}
\end{figure}

By construction we have $d(w',y)=d(w,y)|\leq d(y,z)$ and $d(w',z)=d(w,z)\leq d(y,z)$. We also have $\Angle(yw'z)=\Angle(ywz)\geq \frac{\pi}{3}$.  
By construction we have $d(x,w')=d(x,p)+d(p,w)$ and $d(p,w)=d(p,w')$. By the triangle inequality, $d(x,w)\leq d(x,p)+d(p,w)$ so $d(x,w)\leq d(x,w')$. It is this sufficient to show that when $x,y,z,w'$ all in the plane then $d(x,w')<d(y,z)$.  

Suppose by way of contradiction that $d(x,w')\geq d(y,z)$. As both $d(y,x)\leq d(y,z)$ and $d(y,w')\leq d(y,z)$ we have $\langle xw'\rangle$ must be the largest edge in the triangle (possibly with ties), and by the largest angle property of triangles $\Angle(xyw')\geq \Angle(yxw')$. Similarly, $\Angle(xzw')\geq \Angle(zxw')$. Together these imply $$\Angle(xyw')+\Angle(xzw')\geq \Angle(yxw')+\Angle(w'xz)=\Angle(yxz).$$
By assumption $\Angle(yxz)>5\pi/6$. So the angle sum of the quadrilateral with vertices $x,z,w',y$ satisfies
$$\Angle(yxz)+\Angle(xzw')+\Angle(zw'y)+\Angle(w'yx)>5\pi/6+5\pi/6+\pi/3=2\pi.$$
This contradicts that the angle sum of any quadrilateral is $2\pi$ so we can conclude that $d(w',x)<d(y,z)$.
\end{proof}

We now describe how to build the reduced Vietoris-Rips complex. First we define a Lune function that records the selection of a single point per connected component of a lune. 

\begin{definition}[Lune function]
\label{lune-function}
    Let $\Vr^{1}_{\infty}(X)$ be the set of $1$-simplices in $\Vr_{\infty}(X)$. We define a lune function $L:\Vr^{1}_{\infty}(X)\rightarrow 2^{X}$ as a function that takes a one simplex $\langle yz \rangle$, with $c_{yz}$ connected components in its lune, to a set $\{x_{1},...,x_{c_{yz}}\}$. The points $x_{1},...,x_{c_{yz}}$ are chosen from the connected components of $\langle yz \rangle$, one point from each connect component. 
\end{definition}

\begin{definition}[Reduced Vietoris-Rips complex]
\label{def-reduced-vietoris-rips-complex}
Consider a point-cloud $X$ and a lune function $L$. A Reduced Vietoris-Rips complex of $X$ with scale $r$, 
denoted $\mathcal{R}^{L}_r(X)$ 
is the simplicial complex such that 

\begin{itemize}
    \item 0-simplices are all points of $X$,

    \item 1-simplices are edges $\langle yz \rangle $ with $d(y,z) \leq r$,

    \item 2-simplices are $\langle yzx \rangle$, where $\diam(\langle yzx \rangle) \leq r$ and $x\in L(\langle yz \rangle)$. 

\end{itemize}

\end{definition}

Since $\rvr_r^{L}(X)$ is a simplicial complex and $\rvr^{L}_{r'}(X) \subset \rvr^{L}_{r}(X)$ for $r' <r$ it follows that we 
have a filtration for increasing scale $r$.   
\begin{definition} 
\label{def-reduced-vietoris-rips-filtration}
A reduced Vietoris-Rips filtration on $X$ is any filtration of the form $\rvr^{L}_{\bullet}(X)$, where $L$ is a lune function for $X$.
\end{definition}

When $r$ is the diameter of $X$, $\rvr^{L}_{r}(X)$ will have $O(n)$ 0-simplices and $O(n^2)$ 1-simplices. 
The following Lemma~\ref{bounded-connected-components-proof} shows that when $X$ is a subset of Euclidean space $\mathbb{R}^D$, the number of connected components of a lune is bounded by a constant independent of $n$. It follows that at most $O(1)$ 2-simplices will be added in each lune and thus there will be $O(n^2)$ 2-simplices in $\mathcal{R}^{L}_{r}(X)$.

\begin{lemma}
\label{bounded-connected-components-proof}
    Consider a Euclidean point-cloud $X \subset \mathbb{R}^{D}$. The number of connected components of a lune is bounded above by a constant independent of $|X| = n$ and dependent on $D$. 
\end{lemma}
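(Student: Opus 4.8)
The plan is to show that if a lune $\lune(\langle yz\rangle)$ had too many connected components, we could extract from it a set of points that are pairwise ``far apart'' relative to $r = d(y,z)$, yet all contained in a bounded region (the lune is contained in the intersection of two balls of radius $r$), and this would violate a standard packing bound in $\mathbb{R}^D$. Concretely, pick one representative point $p_i$ from each of the $c$ connected components of the lune. By Definition~\ref{def-connected-components-of-lune}, two representatives $p_i, p_j$ from distinct components satisfy $\langle p_i p_j\rangle \not< \langle yz\rangle$, which (unwinding Definition~\ref{binary-relation-on-simplices}) forces $d(p_i,p_j) \ge r$. On the other hand, every point of the lune lies within distance $r$ of both $y$ and $z$, in particular within distance $r$ of $y$, so all the representatives lie in a ball of radius $r$. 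A ball of radius $r$ in $\mathbb{R}^D$ cannot contain more than a constant number $N(D)$ of points that are pairwise at distance $\ge r$ — this is the elementary volume/packing argument: balls of radius $r/2$ centered at the $p_i$ are disjoint and all sit inside the ball of radius $3r/2$ about $y$, giving $c \le (3r/2)^D / (r/2)^D = 3^D$. Hence $c \le 3^D$, a bound depending only on $D$.

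The key steps, in order, are: (1) fix a lune function-style choice of one representative per connected component; (2) translate the graph-disconnectedness condition of Definition~\ref{def-connected-components-of-lune} into the metric inequality $d(p_i,p_j)\ge r$ for representatives in different components — here one must be slightly careful because the total order $<$ breaks ties using dimension and lexicographic order, so $\langle p_i p_j\rangle \not< \langle yz\rangle$ could in principle hold with $d(p_i,p_j) = r$; this is fine, $d(p_i,p_j)\ge r$ is all we need; (3) observe $\lune(\langle yz\rangle)\subset B(y,r)$ (indeed $\subset B(y,r)\cap B(z,r)$, but one ball suffices); (4) apply the packing bound for points pairwise at distance $\ge r$ inside a ball of radius $r$.

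I would present step (4) as a short self-contained sub-argument: the open balls $B(p_i, r/2)$ are pairwise disjoint (since $d(p_i,p_j)\ge r$ by the triangle inequality forces disjointness of radius-$r/2$ balls), they are all contained in $B(y, 3r/2)$ (since $d(p_i,y) < r$ plus the radius $r/2$), and comparing Lebesgue measures gives $c\cdot (r/2)^D \le (3r/2)^D$, i.e. $c \le 3^D$. One could optimize the constant, but any finite bound depending only on $D$ suffices for the statement, so I would not bother.

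The main obstacle — really the only subtlety — is step (2): being careful that the ``$<$'' in Definition~\ref{def-connected-components-of-lune} is the filtration total order, not a raw distance comparison, and checking that its failure still yields $d(p_i,p_j)\ge r$ rather than something weaker. Since $\langle p_i p_j\rangle$ and $\langle yz\rangle$ are both $1$-simplices, the dimension tie-break is irrelevant, and $\langle p_i p_j\rangle \not< \langle yz\rangle$ implies either $\diam(\langle p_i p_j\rangle) > \diam(\langle yz\rangle)$, i.e. $d(p_i,p_j) > r$, or $\diam(\langle p_i p_j\rangle) = \diam(\langle yz\rangle)$, i.e. $d(p_i,p_j) = r$; in both cases $d(p_i,p_j)\ge r$, which is exactly what the packing argument consumes. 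Everything else is routine Euclidean geometry.
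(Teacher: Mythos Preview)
Your proof is correct and follows essentially the same packing argument as the paper: pick one representative per component, observe representatives from distinct components are pairwise at distance $\ge r$, and then bound the number of such points that can fit in a bounded region via disjoint balls of radius $r/2$. Your version is in fact tidier than the paper's --- you contain the representatives in a single ball $B(y,r)$ rather than the full lune region, which yields the sharper constant $3^D$ in place of the paper's $4^D$, and you are explicit about why the failure of $\langle p_ip_j\rangle < \langle yz\rangle$ in the total order still forces $d(p_i,p_j)\ge r$ (a point the paper leaves implicit). One tiny nit: in step~(3) you write $d(p_i,y) < r$, but the lune definition via the total order only guarantees $d(p_i,y)\le r$; this changes nothing, since the containment $B(p_i,r/2)\subset B(y,3r/2)$ still holds under $\le$.
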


\begin{proof}
Consider a $1$-simplex $\langle yz \rangle$ and let $r = d(y,z)$. Consider the following subset:
\begin{equation}
    Q_1 := \{x\in \mathbb{R}^D \;|\; d(x,y) \leq r \text{ and } d(x,z) \leq r \}. 
\end{equation}

%\VR{Why have you defined $Q_1$ this way  instead of just saying these are the points in the lune? Also, from the figure and the text in the proof, it seems you are referring to $Q_1$ as the geometric region $B(y,r) \cap B(z,r)$? This would be appropriate because $Q_2 = B((y+z)/2, 2r)$. }
%\MK{I have checked this proof and I am ok with it.}
Note that $Q_{1}$ is the closure of $B(y,r) \cap B(z,r)$.
Then the maximum number of connected components in $\lune(\langle yz \rangle)$ is bounded by the maximum number of disjoint open balls of radius $\frac{r}{2}$ with centres in $Q_1$. We can put an upper bound on this number by considering a ball of radius $2r$, $Q_2$, centred at the midpoint of $y$ and $z$ as depicted in Figure \ref{connected-components-proof}. Certainly, the number of disjoint balls with radius $\frac{r}{2}$ that can be placed inside $Q_2$ is more than the maximum number of connected components in $\lune(\langle yz \rangle)$. To see this, consider a set of open balls of radius $\frac{r}{2}$ with centres in $Q_1$ such that all the balls are mutually disjoint. All these balls are contained in $Q_{2}$ and we can always place one more ball centred $\frac{3r}{2}$ away from the midpoint of $y$ of $z$, see Figure \ref{connected-components-proof}. This means that the number of mutually disjoint balls of radius $\frac{r}{2}$ with centres in $Q_1$ is bounded by the number of disjoint balls of radius $\frac{r}{2}$ that can be packed inside $Q_2$. 
%To bound the number of open balls of radius $\frac{r}{2}$ that can be fit into $Q_2$ whilst being mutually disjoint is 
This number is bounded by dividing the volume of $Q_2$ by the volume of a ball with radius $\frac{r}{2}$. It follows that the number of connected components of a $\lune (\langle yz \rangle)$ is bounded by $4^D$. It should be said that this is a rather crude overestimate of the maximum number of connected components.  
\end{proof}

\begin{figure}
\centering
\begin{tikzpicture}[scale = 0.5]
    % \draw[very thin] (-5,-5) grid (5,5); 
    \draw[very thick, red] (0,-2) -- (0,2);
    \draw [very thick, red] (0,2) arc [radius = 4, start angle = 90, end angle = 150];
    \draw [very thick, red] (0,-2) arc [radius = 4, start angle = 270, end angle = 210];
    \draw [very thick, red] (0,2) arc [radius = 4, start angle = 90, end angle = 30];
    \draw [very thick, red] (0,-2) arc [radius = 4, start angle = 270, end angle = 330];
    \node [below right] at (0,0) {$\frac{y+z}{2}$};
    \draw [red, fill] (0, 0) circle [radius = 0.1];
    \draw [red] (0,0) circle [radius = 8];
    \draw [very thick, blue] (0,6) circle [radius = 2];
    \draw [blue, fill] (0,6) circle [radius = 0.05];
    \draw [blue, fill] (-2.5,0.5) circle [radius = 0.05];
    \draw [blue] (-2.5,0.5) circle [radius = 2];
    \draw [blue, fill] (1.5, 1.5) circle [radius = 0.05];
    \draw [blue] (1.5, 1.5) circle [radius = 2];
    \node [above right] at (5.657, 5.657) {$Q_2$};
    \node [right] at (0,6) {$q_2$};
    \node [below right] at (2,-1.2) {$Q_1$};
    \node [above right] at (0,2) {$y$};
    \draw [red, fill] (0,2) circle [radius = 0.1];
    \draw [red, fill] (0,-2) circle [radius = -0.1];
    \node [below right] at (0,-2)  {$z$};
    \draw [blue, dashed] circle [radius = 4];
\end{tikzpicture}
\caption{Two points $p_1$ and $p_2$ inside $Q_1$ with their respective balls of radius $\frac{r}{2}$. $Q_2$ is centered at $\frac{y+z}{2}$ and has radius $2r$. Suppose we place $p_1,...,p_m$ inside $Q_1$ such that $\{B(p_i,\frac{r}{2})\}_{i\in \{1,...,m\}}$ are mutually disjoint. No matter which disjoint balls of radius $\frac{r}{2}$ we centre in $Q_1$, $B(q_2,r)$ will always be able to be placed inside $Q_2$ such that $B(q_2,\frac{r}{2})$ is disjoint from $\cup_{i=1}^{m} B(p_i,\frac{r}{2})$. The boundary of $B(\frac{y+z}{2}, r)$ is included as a dashed line for visualisation assistance. }
\label{connected-components-proof}
\end{figure}

\begin{remark}
    Lemma \ref{bounded-connected-components-proof} can be generalized for $X$ a subset of a metric space with finite doubling dimension.
\end{remark}

\begin{corollary}
\label{corollary-bounded-connected-components}
    For every $r\geq 0$, the number of $0$-simplices of $\rvr_{r}^{L}(X)$ is bounded by $O(n)$ and the number of $1$-simplices is bounded by $O(n^2)$. If $X$ is a point-cloud in $ \mathbb{R}^D$, then Lemma \ref{bounded-connected-components-proof} shows that the number of $2$-simplices in $\rvr_{r}^{L}(X)$ will be bounded above by $O(4^{D}n^2)$. 
\end{corollary}

In the interest of reducing clutter from here on in, we will drop the $L$ from $\rvr_{\bullet}^{L}(X)$ and simply write $\rvr_{\bullet}(X)$ with the implicit understanding that a specific $L$ has been chosen.

\section{The Relationship between Vietoris-Rips complexes and their reduced counterparts}
\label{RVR-theorem}

So far we have detailed what a Reduced Vietoris-Rips complex is but we have not shown why it is useful. Here we state and prove a new result 
which relates the degree-1 persistent homology of the  standard Vietoris-Rips filtration of $X$ with the degree-1 persistent homology of the Reduced Vietoris-Rips filtration of $X$.

%\mytheorem*

\begin{theorem}[Main Theorem]
\label{theorem-VR-RVR-isomorphism}
Consider a finite metric space $X$.   
Then there exists a family of isomorphisms $\theta_{\bullet}$ such that the following diagram commutes

\begin{equation}
 \begin{tikzcd}[ampersand replacement=\&]
  H_{1}(\rvr_{r_1}(X)) \arrow[r, "f_{r_1}^{r_2}"] \arrow[d, "\theta_{r_1}"'] \& H_{1}(\rvr_{r_2}(X)) \arrow[d, "\theta_{r_2}"] \\
  H_{1}(\Vr_{r_1}(X)) \arrow[r, "g_{r_1}^{r_2}"] \& H_{1}(\Vr_{r_2}(X))
  \end{tikzcd} 
\end{equation}

\noindent for all $r_1$ and $r_2$ such that $0 \leq  r_1 < r_2$.  Above, $f_{r_1}^{r_2}$ and $g_{r_1}^{r_2}$ are the maps at homology level induced by the natural inclusions. 
\end{theorem}

\paragraph{The proof of the Main Theorem} 

Let $r$ be an arbitrary non-negative number. We first define $\theta_{r}$ and then show it is an isomorphism. Let $\gamma + B_{1}(\rvr_{r}(X))$ be an element in $H_{1}(\rvr_{r}(X))$. Then we define $\theta_{r}$ as follows:

\begin{equation}
\theta_{r}(\gamma + B_{1}(\rvr_{r}(X))) := \gamma + B_{1}(\Vr_{r}(X))
\end{equation}

This mapping is well defined because $B_{1}(\rvr_{r}(X)) \subset B_{1}(\Vr_{r}(X))$.

\begin{lemma}
\label{theta-is-surjective}
    $\theta_{r}$ is surjective for all $r \geq 0$. 
\end{lemma}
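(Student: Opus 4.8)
To prove $\theta_r$ is surjective, I need to show that every degree-1 homology class in $H_1(\vr_r(X))$ has a representative cycle that lives entirely in $\rvr_r(X)$. Since $\rvr_r(X)$ and $\vr_r(X)$ have the same $0$- and $1$-skeleta (both contain all vertices and all edges of diameter $\le r$), the groups of $1$-cycles agree: $Z_1(\rvr_r(X)) = Z_1(\vr_r(X))$. Hence the only thing that can differ is the boundary group, and surjectivity of $\theta_r$ amounts to showing that $Z_1(\vr_r(X)) = Z_1(\rvr_r(X)) \subseteq Z_1(\vr_r(X))$ maps onto the quotient $H_1(\vr_r(X)) = Z_1(\vr_r(X))/B_1(\vr_r(X))$ — which is immediate once we know $Z_1(\rvr_r(X)) = Z_1(\vr_r(X))$. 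So the real content is just the observation about matching $1$-skeleta, and surjectivity follows formally: given $\gamma + B_1(\vr_r(X))$, pick the same cycle $\gamma$, note $\gamma \in Z_1(\rvr_r(X))$, and $\theta_r(\gamma + B_1(\rvr_r(X))) = \gamma + B_1(\vr_r(X))$.

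**Steps in order.** First I would record the structural fact that $\rvr_r(X)$ and $\vr_r(X)$ share the same $1$-skeleton: the $0$-simplices of both are all of $X$, and the $1$-simplices of both are exactly the edges $\langle yz\rangle$ with $d(y,z) \le r$ (this is directly from Definition~\ref{def-reduced-vietoris-rips-complex} and the definition of $\vr_r(X)$). Second, I would deduce $C_1(\rvr_r(X)) = C_1(\vr_r(X))$ and $C_0(\rvr_r(X)) = C_0(\vr_r(X))$, so the boundary maps $\partial_1$ coincide, giving $Z_1(\rvr_r(X)) = \ker \partial_1^{\rvr_r(X)} = \ker \partial_1^{\vr_r(X)} = Z_1(\vr_r(X))$. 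Third, given an arbitrary class $[\gamma] = \gamma + B_1(\vr_r(X)) \in H_1(\vr_r(X))$, I may take $\gamma \in Z_1(\vr_r(X)) = Z_1(\rvr_r(X))$, so $\gamma + B_1(\rvr_r(X))$ is a legitimate element of $H_1(\rvr_r(X))$, and by definition $\theta_r(\gamma + B_1(\rvr_r(X))) = \gamma + B_1(\vr_r(X)) = [\gamma]$. This exhibits the required preimage.

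**Main obstacle.** There isn't a serious obstacle here; the lemma is essentially a skeleton-counting observation, and the one-line argument above suffices. The genuinely hard direction of the Main Theorem is injectivity of $\theta_r$ (equivalently, showing $B_1(\vr_r(X)) \cap Z_1(\rvr_r(X))$ is not strictly larger than $B_1(\rvr_r(X))$, i.e., that every $1$-cycle which bounds in $\vr_r(X)$ using arbitrary triangles of diameter $\le r$ also bounds using only the triangles retained in the reduced complex), and that is where the lune/lens machinery and Lemma~\ref{bounded-connected-components-proof} will do real work — but that is a separate lemma. For the present statement, the only thing to be slightly careful about is making explicit that the $1$-skeleta genuinely coincide (so that $Z_1$ agrees), rather than merely that one complex is contained in the other; I would state that comparison cleanly before invoking it.
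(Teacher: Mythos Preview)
Your proposal is correct and follows essentially the same approach as the paper: both arguments rest on the observation that $\rvr_r(X)$ and $\vr_r(X)$ share the same $1$-skeleton, so any $1$-cycle representing a class in $H_1(\vr_r(X))$ already lives in $\rvr_r(X)$ and furnishes its own preimage under $\theta_r$. If anything, you are slightly more careful than the paper in explicitly identifying $Z_1(\rvr_r(X)) = Z_1(\vr_r(X))$ before invoking the representative.
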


\begin{proof}
    Let $\gamma + B_{1}(\Vr_{r}(X)) \in H_{1}(\Vr_{r}(X))$. 
    Since $\gamma \in Z_1(\Vr_{r}(X))$ and all 1-simplices of $\Vr_{r}(X)$ are also in $\rvr_{r}(X)$, it follows that $\gamma + B_{1}(\rvr_{r}(X))$ is mapped to $\gamma + B_{1}(\Vr_{r}(X))$ by $\theta_{r}$.
\end{proof}

In order to show that $\theta_{r}$ is an isomorphism for all $r$ we need to show that $\theta_{r}$ is injective for all $r$. Before we do so, we will need the following preliminary lemmas. 

\begin{lemma}
\label{isomorphism-inclusions}
    If $\theta_{s}$ is an isomorphism, then it follows that $B_{1}(\Vr_{s}(X)) = B_{1}(\rvr_{s}(X))$. 
\end{lemma}

\begin{proof}
The inclusion $B_{1}(\rvr_{s}(X)) \subset B_{1}(\Vr_{s}(X))$ follows from the fact that $\rvr_{s}(X) \subset \Vr_{s}(X)$. To get the reverse inclusion let $\gamma \in B_{1}(\Vr_{s}(X))$. Then, as the $1$-simplices are the same for $\Vr_s(X)$ and $\rvr_s(X)$, we have $\theta_{s}(\gamma + B_{1}(\rvr_{s}(X))) = \gamma + B_{1}(\Vr_{s}(X)) = 0 + B_{1}(\Vr_{s}(X))$. Since $\theta_{s}$ is an isomorphism it follows that $\gamma \in B_{1}(\rvr_{s}(X))$. Thus we have the reverse inclusion $B_{1}(\Vr_{s}(X)) \subset B_{1}(\rvr_{s}(X))$.
\end{proof}

\begin{lemma}
\label{first-edge-proof}
    Consider arbitrary $r>0$ and suppose that $\theta_{s}$ is injective for all $s<r$. Let all the $1$-simplices of diameter $r$ be $\langle y_1z_1 \rangle < \cdots < \langle y_{m_{r}} z_{m_{r}} \rangle $. Then 

    \begin{equation}
    \label{same-edges-equation-1}
         \{ \partial (\langle x y_1 z_1 \rangle)\; | \; x\in \lune (\langle y_1 z_1\rangle) \} \subset B_{1}(\rvr_{r}(X)).
    \end{equation}

\begin{proof}
    First observe that by Lemma \ref{theta-is-surjective} that $\theta_s$ is an isomorphism for all $s < r$. This means that $B_{1}(\rvr_{s}(X)) = B_{1}(\Vr_{s}(X))$ for $s < r$ by Lemma \ref{isomorphism-inclusions}. 

    We note that any point $x \in \lune (\langle y_1 z_1 \rangle)$ must necessarily have  $\max\big(d(x,y_1), d(x,z_1)\big) < r$. To see why this is so, suppose $d(x,y_1) = r$, then since $x\in \lune (\langle y_1 z_1 \rangle)$ it follows that $ \langle x y_1 \rangle < \langle y_1 z_1 \rangle $, but this is a contradiction since there should be no $1$-simplex of diameter $r$ that appears in the filtration before $ \langle y_1 z_1 \rangle$. An analogous argument can be used to show that $d(x,z_1) < r$. 

To this end, let $x$ be any point in $\lune (\langle y_1 z_1 \rangle)$ and consider the $2$-simplex $\sigma = \langle x y_1 z_1 \rangle$. 
Let $w \in L(\langle y_1 z_1 \rangle)$ be in the same connected component as $x$.

If $x = w$ then there is nothing to prove since $\sigma \in \rvr_{r}((X))$ by definition and thus $\partial \sigma \in B_{1}(\rvr_{r}(X))$. 
Otherwise, since $x$ and $w$ are in the same component we know there exist $v_0, ..., v_{q} \in \lune(\langle y_{1} z_{1} \rangle)$ such that $x = v_0,v_1,...,v_{q}=w$ forms a path with $\langle v_{i} v_{i+1} \rangle < \langle y_{1} z_{1} \rangle$,  $d(v_i, y_1) < r$ and $d(v_i, z_1) < r$. 
Using the fact that $\langle y_1 z_1 \rangle$ is the first $1$-simplex of diameter $r$ to appear in the filtration, it also follows that $d(v_{i}, v_{i+1}) < r$ for $i = 1,...,q-1$. Using the fact that $\partial (\partial (\langle y_1 z_1 v_{i} v_{i+1} \rangle)) = 0$ we see that 
\begin{equation}
\label{proof-equation}
\partial (\langle y_1 z_1 v_{i} \rangle )  = \partial (\langle y_1 z_1 v_{i+1} \rangle ) + \partial (\langle v_{i} v_{i+1} y_1 \rangle) + \partial (\langle v_{i} v_{i+1} z_1 \rangle). 
\end{equation}

%\vspace{12pt}

\medskip
\noindent
The 2-simplex $\langle v_{i}v_{i+1}y_1 \rangle$ must have diameter less than $r$ because all $1$-simplex faces have diameter less than $r$. Similarly, $\diam (\langle v_{i} v_{i+1} z_1 \rangle) < r$. Since $\theta_{\diam(\langle v_{i} v_{i+1} y_1 \rangle)}$ and $\theta_{\diam (\langle v_{i} v_{i+1} z_1 \rangle)}$ are isomorphisms, it follows that 
\begin{equation}
\partial (\langle v_{i} v_{i+1} y_1 \rangle ) \in B_{1}(\Vr_{\diam(\langle v_{i} v_{i+1} y_1 \rangle)}(X)) = B_{1}(\rvr_{\diam(\langle v_{i} v_{i+1} y_1 \rangle)}(X)) \subset \rvr_{r}(X)
\end{equation}
and 
\begin{equation}
\partial (\langle v_{i}v_{i+1}z_1 \rangle) \in B_{1}(\Vr_{\diam (\langle v_{i} v_{i+1} z_1 \rangle)}(X)) = B_{1}(\rvr_{\diam (\langle v_{i} v_{i+1} z_1 \rangle)}(X)) \subset \rvr_{r} (X)
\end{equation}
by Lemma \ref{isomorphism-inclusions}. 
Plugging in $i = q-1$ into (\ref{proof-equation}) we obtain the following. 
\begin{equation}
\label{second-proof-equation}
    \partial (\langle y_1 z_1 v_{q-1} \rangle )  = \partial (\langle y_1 z_1 v_{q} \rangle ) + \partial (\langle v_{q-1} v_{q} y_1 \rangle) + \partial (\langle v_{q-1} v_{q} z_1 \rangle). 
\end{equation}
We have already shown that the second and third term on the right hand side of (\ref{second-proof-equation}) are in $B_{1}(\rvr_{r}(X))$. 
The first term on the right hand side of (\ref{second-proof-equation}) is in $B_{1}(\rvr_{r}(X))$ because $v_q = w \in L(\langle y_1 z_1 \rangle)$, meaning the $2$-simplex $\langle y_1 z_1 v_{q} \rangle \in \rvr_{r}(X)$ by definition. Thus it follows that $\partial (\langle y_1 z_1 v_{q-1} \rangle) \in B_{1}(\rvr_{r}(X))$. Using (\ref{proof-equation}) repeatedly by letting $i = q-2, q-3, ...,0$ we can subsequently show that $\partial (\langle y_1 z_1 v_{q-2}\rangle), ..., \partial (\langle y_1 z_1 v_{0}\rangle) \in B_{1}(\rvr_{r}(X))$. Remembering that $v_{0} = x$ we thus have that $\partial \sigma =\partial( \langle x y_{1} z_{1} \rangle) \in B_{1}(\rvr_{r}(X))$.
\end{proof}

\end{lemma}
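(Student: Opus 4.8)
The plan is to exploit the hypothesis that $\langle y_1 z_1\rangle$ is the \emph{first} $1$-simplex of diameter $r$ in the total order, which forces every $1$-simplex strictly preceding it to have diameter strictly less than $r$. In particular, if $x\in\lune(\langle y_1 z_1\rangle)$ then $\langle x y_1\rangle<\langle y_1 z_1\rangle$ and $\langle x z_1\rangle<\langle y_1 z_1\rangle$ give $d(x,y_1)<r$ and $d(x,z_1)<r$, so $\diam(\langle x y_1 z_1\rangle)=r$; the $2$-simplex $\langle x y_1 z_1\rangle$ therefore enters $\vr_r(X)$, but in general it is \emph{not} a simplex of $\rvr_r(X)$. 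The goal is to show its boundary is nonetheless a reduced boundary.

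First I would pass to a representative supplied by the lune function: let $w$ be the point of $L(\langle y_1 z_1\rangle)$ lying in the same connected component of $\lune(\langle y_1 z_1\rangle)$ as $x$. If $x=w$ there is nothing to prove, since then $\langle x y_1 z_1\rangle\in\rvr_r(X)$ by Definition~\ref{def-reduced-vietoris-rips-complex}. Otherwise, by Definition~\ref{def-connected-components-of-lune} there is a path $x=v_0,v_1,\dots,v_q=w$ inside the lune with $\langle v_i v_{i+1}\rangle<\langle y_1 z_1\rangle$ for each $i$. Again using that $\langle y_1 z_1\rangle$ is the first $1$-simplex of diameter $r$, this yields $d(v_i,v_{i+1})<r$, while membership of each $v_i$ in the lune yields $d(v_i,y_1)<r$ and $d(v_i,z_1)<r$.

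The core of the argument is the telescoping identity coming from $\partial\partial\langle y_1 z_1 v_i v_{i+1}\rangle=0$, namely
\begin{equation}
\partial\langle y_1 z_1 v_i\rangle=\partial\langle y_1 z_1 v_{i+1}\rangle+\partial\langle v_i v_{i+1} y_1\rangle+\partial\langle v_i v_{i+1} z_1\rangle .
\end{equation}
Each of the two ``new'' triangles $\langle v_i v_{i+1} y_1\rangle$ and $\langle v_i v_{i+1} z_1\rangle$ has all three edges of length $<r$, hence diameter $<r$; since $\theta_s$ is an isomorphism for every $s<r$ (using Lemma~\ref{theta-is-surjective} together with the injectivity hypothesis), Lemma~\ref{isomorphism-inclusions} gives $B_1(\vr_s(X))=B_1(\rvr_s(X))$, so both of these boundaries lie in $B_1(\rvr_s(X))\subset B_1(\rvr_r(X))$ at the relevant scale $s<r$. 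For the base of the telescope, $\langle y_1 z_1 v_q\rangle=\langle y_1 z_1 w\rangle$ is a genuine $2$-simplex of $\rvr_r(X)$ because $w\in L(\langle y_1 z_1\rangle)$ and $\diam(\langle y_1 z_1 w\rangle)=r$, so $\partial\langle y_1 z_1 v_q\rangle\in B_1(\rvr_r(X))$. Running the identity downward for $i=q-1,q-2,\dots,0$ propagates membership in $B_1(\rvr_r(X))$ at each step, and $v_0=x$ finishes the proof.

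I expect the only delicate point to be the careful bookkeeping of strict versus non-strict diameter inequalities: the whole argument hinges on $\langle y_1 z_1\rangle$ being the \emph{first} $1$-simplex of its diameter, which is exactly what forces all the auxiliary triangles to have diameter strictly below $r$, so that the inductive hypothesis (isomorphism of $\theta_s$ for $s<r$) can be invoked. The homological manipulation itself is routine once those metric inequalities are secured.
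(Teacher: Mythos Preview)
Your proposal is correct and follows essentially the same approach as the paper's proof: pick the lune-function representative $w$ in the same component as $x$, walk along a path $x=v_0,\dots,v_q=w$ in the lune, use the identity $\partial\partial\langle y_1 z_1 v_i v_{i+1}\rangle=0$ to telescope, and invoke the isomorphism of $\theta_s$ for $s<r$ (via Lemmas~\ref{theta-is-surjective} and~\ref{isomorphism-inclusions}) to handle the auxiliary triangles of diameter strictly less than $r$. Your emphasis on the strict-inequality bookkeeping is exactly the point the paper also leans on.
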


\begin{lemma}
\label{the-big-lemma}
    Consider $r>0$ arbitrary and suppose that $\theta_{s}$ is injective for all $s<r$. Let $\sigma$ be a $2$-simplex in $\Vr_{r}(X)$ such that $\diam (\sigma) = r$, then we have that $\partial \sigma \in B_{1}(\rvr_{r}(X))$. That is to say, $B_{1}(\rvr_{r}(X)) = B_{1}(\Vr_{r}(X))$. 
\end{lemma}

\begin{proof}
    First observe by Lemma \ref{theta-is-surjective} that $\theta_s$ is an isomorphism for all $s < r$. This means that $B_{1}(\rvr_{s}(X)) = B_{1}(\Vr_{s}(X))$ by Lemma \ref{isomorphism-inclusions}. 
    
    Let all the $1$-simplices of diameter $r$ be $\langle y_1z_1 \rangle < \cdots < \langle y_{m_{r}} z_{m_{r}} \rangle $. We wish to show that all $2$-simplices $\sigma$, with $\diam(\sigma) = r$ are such that $\partial \sigma \in B_{1}(\rvr_{r}(X))$. This is equivalent to showing that 
    \begin{equation}
    \label{same-edges-equation}
         \{ \partial (\langle x y_j z_j \rangle)| x\in \lune (\langle y_j z_j \rangle) \} \subset B_{1}(\rvr_{r}(X))
    \end{equation}
for $j = 1,...,m_{r}$. We will prove that Equation \ref{same-edges-equation} holds for $j = 1,...,m_{r}$ by induction. That is, we will show the following:

\begin{itemize}
    \item Equation \ref{same-edges-equation} holds for $j=1$. This is just Lemma \ref{first-edge-proof}.

    \item If Equation \ref{same-edges-equation} holds for $j=1,...,k$, then it holds for $j = k+1$. 
\end{itemize}

The proof of the second statement is similar to that of Lemma \ref{first-edge-proof} but has some subtle differences. 

Let $x$ be any point in $\lune (\langle y_{k+1} z_{k+1} \rangle)$ and consider the $2$-simplex $\langle x y_{k+1} z_{k+1} \rangle$. 
Let $w \in L(\langle y_{k+1}z_{k+1} \rangle)$ belong to the same connected component as $x$. If $x=w$ then there is nothing to prove since $\sigma \in \rvr_{r}(X))$ by definition and thus $\partial \sigma \in B_{1}(\rvr_{r}(X))$. Otherwise, since $x$ and $w$ are in the same component we know there exists $v_{0},...,v_{l} \in \lune (\langle y_{k+1} z_{k+1} \rangle)$ such that $x = v_0, v_1, ..., v_l = w$ forms a path such that $\langle v_{i}v_{i+1} \rangle < \langle y_{k+1} z_{k+1} \rangle $ for $i = 0,...,l-1$. We know that $\partial (\partial (\langle y_{k+1} z_{k+1} v_{i}v_{i+1} \rangle )) = 0$ and thus 
\begin{equation}
\label{equation-general-proof}
    \partial (\langle y_{k+1}z_{k+1} v_{i} \rangle ) = \partial (\langle y_{k+1}z_{k+1} v_{i+1} \rangle) + \partial (\langle v_{i}v_{i+1}y_{k+1} \rangle) + \partial (\langle v_{i}v_{i+1}z_{k+1} \rangle).
\end{equation}
We now show that $\partial (\langle v_{i}v_{i+1}y_{k+1} \rangle) \in B_{1}(\rvr_{r}(X))$:

\begin{itemize}
    \item We already established that $\langle v_{i}v_{i+1} \rangle < \langle y_{k+1}z_{k+1} \rangle$. %from $x$ and $w$ being in the same connected component. 

    \item $\langle v_{i} y_{k+1} \rangle < \langle y_{k+1}z_{k+1} \rangle$ since $v_{i} \in \lune (\langle y_{k+1}z_{k+1} \rangle)$. 

    \item $\langle v_{i+1}y_{k+1} \rangle < \langle y_{k+1}z_{k+1} \rangle$ since $v_{i+1} \in \lune (\langle y_{k+1}z_{k+1} \rangle)$. 
\end{itemize}

Since all faces of $\langle v_{i}v_{i+1}y_{k+1} \rangle$ appear in the filtration before $\langle y_{k+1}z_{k+1} \rangle$ it follows that 
either $\diam(\langle v_{i}v_{i+1}y_{k+1} \rangle) < r$, or one of the edges has diameter equal to $r$, in which case that edge must be one of $\langle y_jz_j \rangle$ for $j= 1, \ldots, k$. 
By our inductive assumption, it follows that $ \partial (\langle v_{i}v_{i+1}y_{k+1} \rangle) \in B_1(\rvr_r(X))$.  
Similarly, $ \partial (\langle v_{i}v_{i+1}z_{k+1} \rangle) \in B_1(\rvr_r(X))$.

Plugging $i = l-1$ into (\ref{equation-general-proof}) we obtain the following.
\begin{equation}
\label{equation-general-proof-q-1}
    \partial (\langle y_{k+1}z_{k+1} v_{l-1} \rangle ) = \partial (\langle y_{k+1}z_{k+1} v_{l} \rangle) + \partial (\langle v_{l-1}v_{l}y_{k+1} \rangle) + \partial (\langle v_{l-1}v_{l}z_{k+1} \rangle).
\end{equation}

We have just shown that the second and third term on the right hand side of (\ref{equation-general-proof-q-1}) are in $B_{1}(\rvr_{r}(X))$. Remembering that $v_{l} = w \in L(\langle y_{k+1}z_{k+1} \rangle)$, the first term on the right hand side of (\ref{equation-general-proof-q-1}) is in $B_{1}(\rvr_{r}(X))$ because $\langle y_{k+1}z_{k+1}v_{l} \rangle$ is a $2$-simplex in $\rvr_{r}(X)$ by the definition of the Reduced Vietoris-Rips complex. Thus it follows that \\ $\partial (\langle y_{k+1}z_{k+1}v_{l-1} \rangle) \in B_{1}(\rvr_{r}(X))$. 
Using (\ref{equation-general-proof}) repeatedly by letting $i = l-2, l-3, ...,0$ we can subsequently show that $\partial (\langle y_{k+1}z_{k+1}v_{l-2}),...,\partial(\langle y_{k+1}z_{k+1}v_{0}\rangle) \in B_{1}(\rvr_{r}(X))$. Remembering that $v_{0} = x$ we thus have that $\partial \sigma = \partial (\langle xy_{k+1}z_{k+1} \rangle ) \in B_{1}(\rvr_{r}(X))$.

\end{proof}

We are finally in a position to show that $\theta_{r}$ is injective for all $r \geq 0$.

\begin{lemma}
    $\theta_{r}$ is injective for all $r\geq 0$.
\end{lemma}
%We now show that $\theta_{r}$ is injective for all values of $r$. 

\begin{proof}
We will use a proof by induction by inducting on the value of $r$. That is to say, we will prove the following.

\begin{itemize}
    \item $\theta_{0}$ is injective. 

    \item If $\theta_{s}$ is injective for all $s<r$, then $\theta_{r}$ is injective. 
\end{itemize}

Since $H_{1}(\Vr_{0}(X))$ and $H_{1}(\rvr_{0}(X))$ are zero the injectivity of $\theta_{0}$ is immediate. Now we show that if $\theta_{s}$ is injective for all $s<r$, then $\theta_{r}$ is injective. Suppose that \\ $\theta_{r}(\gamma + B_{1}(\rvr_{r}(X))) = \gamma + B_{1}(\Vr_{r}(X)) = 0 + B_{1}(\Vr_{r}(X))$. If we can show that \\ $\gamma \in B_{1}(\rvr_{r}(X))$ then it would follow that $\theta_{r}$ is injective. Since \\ $\gamma + B_{1}(\Vr_{r}(X)) = 0 + B_{1}(\Vr_{r}(X))$ it follows that $\gamma \in B_{1}(\Vr_{r}(X))$. By Lemma \ref{the-big-lemma} we know that $B_{1}(\rvr_{r}(X)) = B_{1}(\Vr_{r}(X))$. Hence $\gamma \in B_{1}(\rvr_{r}(X))$ and thus $\theta_{r}$ is injective. 

\end{proof}

Finally, we show that the diagram in Theorem \ref{theorem-VR-RVR-isomorphism} is commutative to complete the proof of Theorem $\ref{theorem-VR-RVR-isomorphism}$. 

\begin{lemma}
    The diagram in Theorem \ref{theorem-VR-RVR-isomorphism} is commutative.
\end{lemma}

\begin{proof}
    Let $0 < r_1 < r_2$ since the case where $r_1 = 0$ is trivial. Let $\gamma + B_{1}(\rvr_{r_1}(X))$ be an element of $H_{1}(\rvr_{r_1}(X))$. Then \\
    $g_{r_1}^{r_2}(\theta_{r_1} (\gamma + B_{1}(\rvr_{r_1}(X))) = g_{r_1}^{r_2}(\gamma + B_{1}(\Vr_{r_1}(X))) = \gamma + B_{1}(\Vr_{r_2}(X))$ and \\ $\theta_{r_2}(f_{r_1}^{r_2}(\gamma + B_{1}(\rvr_{r_1}(X))) = \theta_{r_2}(\gamma + B_{1}(\rvr_{r_2}(X))) = \gamma + B_{1}(\Vr_{r_2}(X))$. Thus commutativity is proven.
\end{proof}

\begin{remark}
    \label{remark:not-a-homotopy}
    Theorem \ref{theorem-VR-RVR-isomorphism} does not arise from an underlying homotopy. In general, $\rvr_{\bullet}(X)$ will not be homotopy equivalent to $\Vr_{\bullet}(X)$ since $\Vr_{\bullet}(X)$ is able to have higher degree homology groups. The subfiltration $\rvr_{\bullet}(X)$ is not even homotopy equivalent to the $2$-skeleton of $\Vr_{\bullet}(X)$. Consider four points in $\mathbb{R}^3$ such that they span a tetrahedron. Then the $2$-skeleton of $\Vr_{\bullet}(X)$ consists of all four $2$-simplex faces, whereas $\rvr_{\bullet}(X)$ consists of only three of those faces. These are clearly not homotopy equivalent as one is contractible and the other is not. 
\end{remark}

\section{Geometric data structures}
\label{GDS}
In this section we introduce some geometric data structures used to implement our method for computing VRPH. These data structures are frequently utilised in algorithms working with finite sets of points in Euclidean spaces and finite metric spaces more generally.

\subsection{Minimum Spanning Trees}

Minimum Spanning Trees are fundamental structures used in the study of graphs. The first algorithm for computing a minimum spanning tree was developed by Bor\r{u}vka in order to find an efficient electrical coverage of Moravia \cite{boruvka1926prispevek}. Other commonly used algorithms for computing the minimum spanning tree include Prim's algorithm \cite{prim1957shortest}, actually first invented by Vojtech Jarnik \cite{jarnik1930jistem} and Kruskal's Algorithm \cite{kruskal1956shortest}. For us, minimum spanning trees are useful because the edges are the 1-simplices that correspond to deaths of degree-0 homology classes in the Vietoris-Rips filtration. We briefly go over the definition of a minimum spanning tree and state some results about their relation to Vietoris-Rips persistent homology. Whilst we work in the context of Euclidean point-clouds, these results hold for arbitrary point-clouds. In what follows, recall that $\spx{1}(\Vr_{\infty}(X))$ denotes the set of $1$-simplices in $\Vr_{\infty}(X)$ and, in a similar fashion, $\spx{0}(\Vr_{\infty}(X))$ denotes the set of $0$-simplices in $\Vr_{\infty}(X)$. 

\begin{definition}
Consider a point-cloud $X$. Then a subset of $1$-simplices $S \subset \spx{1}(\Vr_{\infty}(X))$ is said to be acyclic if $H_{1}(S\cup \spx{0}(\Vr_{\infty}(X))) = 0$. 
\end{definition}
 
\begin{definition}
Consider a point-cloud $X$. Then a subset of $1$-simplices $S \subset \spx{1}(\Vr_{\infty}(X))$ is said to be spanning if $H_{0}(S\cup \spx{0}(\Vr_{\infty}(X)))=\mathbb{Z}_{2}$.
\end{definition}

\begin{definition}
Consider a point-cloud $X$. Then a subset of $1$-simplices $S \subset \spx{1}(\Vr_{\infty}(X))$ is said to be a spanning tree of $X$ if it is both acyclic and spanning.     
\end{definition}

We now define the notion of the weight of a graph so that we can then define a \emph{minimum} spanning tree. 

\begin{definition}
Consider a graph $G$ consisting of edges from $\spx{1}(\Vr_{\infty}(X))$ and vertices from $\spx{0}(\Vr_{\infty}(X))$.  For $\sigma \in G$, let $w(\sigma) = \diam (\sigma)$ and define 

\begin{equation}
w(G) := \sum_{\sigma \in G} w (\sigma)
\end{equation}

We call $w(G)$ the weight of the graph. 
\end{definition}

\begin{definition}
A minimum spanning tree is a spanning tree $S$ for $X$ with minimum value of $w(S)$ out of all spanning trees on $X$.  
\end{definition}

It should be noted that if all pairwise distances are unique then we can talk about \emph{the} minimum spanning tree for $X$ which we denote as $\mst (X)$. If $X$ does not have unique pairwise distances, we use Definition \ref{def-second-mst-definition} to select a specific minimum spanning tree. 

\begin{definition}
\label{def-second-mst-definition}
    Consider a point cloud $X$, the labelling of its points $\psi: X \rightarrow \{1,...,n\}$
    %where the pairwise distances may not be unique 
    and the Vietoris-Rips simplex-wise filtration of $\Vr_{\bullet}(X)$ defined by the ordering $\phi$ given in Definition~\ref{def-VR-total-order}. 
    Then we construct a graph, $G$, with vertex set $X$ and edges chosen as all 1-simplices that correspond to the death of a degree-0 homology class in $\Vr_{\bullet}(X)$. 
\end{definition}

We now show that $G$ is a minimum spanning tree for $X$. 

\begin{lemma}
    \label{second-def-is-actually-a-mst}
    The construction of $G$ in Definition \ref{def-second-mst-definition} is indeed a minimum spanning tree for $X$. 
\end{lemma}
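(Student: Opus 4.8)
### Proof Plan

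The plan is to show that the graph $G$ from Definition~\ref{def-second-mst-definition} is spanning, acyclic, and of minimum weight, using the machinery of Lemma~\ref{can-only-birth-or-kill} to connect degree-$0$ deaths with the filtration structure. First I would recall that in the simplex-wise Vietoris-Rips filtration every $1$-simplex either kills a degree-$0$ class or gives birth to a degree-$1$ class (Lemma~\ref{can-only-birth-or-kill}). The edges of $G$ are, by definition, exactly those $1$-simplices that kill a degree-$0$ class. Since we start with $n$ connected components (the $0$-simplices, each added before any edge because they have diameter $0$ and lower dimension) and end with one connected component in $\vr_\infty(X)$, exactly $n-1$ deaths of degree-$0$ classes occur, so $G$ has exactly $n-1$ edges. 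Spanning-ness follows because each edge that kills a $0$-class merges two components: after processing all edges of $G$ in filtration order, $H_0(G \cup \vr_\infty^0(X)) = 1$. Acyclicity is then automatic: a connected graph on $n$ vertices with exactly $n-1$ edges is a tree, so $H_1(G \cup \vr_\infty^0(X)) = 0$. Hence $G$ is a spanning tree.

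Next I would establish minimality. The cleanest route is the standard cut/cycle property argument adapted to the filtration. Suppose $S$ is any spanning tree of $X$; I want $w(G) \le w(S)$. Enumerate the edges of $G$ in filtration order as $e_1 < e_2 < \cdots < e_{n-1}$ and similarly the edges of $S$ as $e'_1 < e'_2 < \cdots < e'_{n-1}$. I claim $\diam(e_i) \le \diam(e_i')$ for every $i$, which immediately gives $w(G) \le w(S)$. To prove the claim, fix $i$ and consider the subcomplex $K$ of $\vr_\bullet(X)$ consisting of all simplices of diameter strictly less than $\diam(e_i')$ together with all $1$-simplices appearing strictly before $e_i'$ in the total order. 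The edges $e_1', \dots, e_{i-1}'$ all lie in $K$ and span at most $n - i + 1$ components; hence at least one of the $i$ edges $e_1', \dots, e_i'$ joins two distinct components of the subgraph formed by $\vr^0_\infty(X) \cup \{\text{edges of } G \text{ before } e_i'\}$ — more carefully, since $e_1, \dots, e_{i-1}$ are the first $i-1$ merging edges of the whole filtration, after adding all edges strictly before $e_i'$ the graph $G \cap K$ has been formed up to some point, and a counting argument (Kruskal-style exchange) shows that $e_i$ cannot come after $e_i'$. I would present this as: among $e_1', \ldots, e_i'$, consider the first one, say $e'_j$, whose endpoints lie in different components of the graph $\langle \text{vertices}, \{e_1, \ldots, e_{i-1}\}\rangle$; such a $j$ exists because otherwise $e_1', \ldots, e_i'$ would create a cycle with $e_1, \ldots, e_{i-1}$, contradicting that $S$ is acyclic when combined appropriately — and then $e'_j$ is a candidate edge available to the filtration at the moment $e_i$ is chosen, forcing $e_i \le e'_j \le e'_i$ in the total order, hence $\diam(e_i) \le \diam(e_i')$.

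The main obstacle will be making the exchange argument in the previous paragraph fully rigorous within the filtration/persistence language rather than the usual graph-algorithmic language: specifically, pinning down that ``the $1$-simplex $\sigma_k$ that kills the $k$-th degree-$0$ class'' is precisely the next edge (in the total order $<$) whose two endpoints lie in distinct components of the graph built from $\vr^0_\infty(X)$ and the previously-chosen killing edges. This is essentially a restatement of how degree-$0$ persistence pairing works — an edge kills a $0$-class iff its boundary is not already a boundary iff its two vertices are in different connected components of the subcomplex built so far — and I would isolate it as a short sub-claim proved via Lemma~\ref{can-only-kill}. Once that correspondence is in hand, the matroid exchange property of forests yields $\diam(e_i) \le \diam(e_i')$ termwise and the minimality of $w(G)$ follows. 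An alternative, perhaps cleaner, presentation avoids termwise comparison entirely: show directly via the cut property that every edge of $G$ is a minimum-diameter edge across some cut of $X$ (the cut separating the components present just before it is added), and invoke the standard fact that any edge set selected by the cut property forms a minimum spanning tree; I would likely choose whichever of these two framings produces the shortest self-contained writeup given the lemmas already available.
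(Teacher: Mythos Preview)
Your proposal is correct, but it takes a substantially longer route than the paper. The paper's proof is essentially one observation: an edge kills a degree-$0$ class precisely when its endpoints lie in distinct components of the complex built so far, and since the simplex-wise filtration processes $1$-simplices in the total order $<$ (which refines the ordering by diameter), the set $G$ is exactly the edge set selected by Kruskal's algorithm run on the complete graph with weights $d(\cdot,\cdot)$ and ties broken by $<$. The correctness of Kruskal's algorithm is then cited directly.

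You recover the same underlying identification --- your sub-claim that ``the $k$-th killing edge is the next edge in $<$ whose endpoints lie in distinct components'' is precisely the statement that $G$ is Kruskal's output --- but instead of invoking Kruskal you re-prove its optimality from scratch via a termwise exchange argument (or the cut property). That buys self-containment, at the cost of length and some delicacy in the exchange step, where your sketch is a bit loose (the sentence about ``otherwise $e_1',\ldots,e_i'$ would create a cycle with $e_1,\ldots,e_{i-1}$'' needs tightening; the standard matroid argument is that $\{e_1,\ldots,e_{i-1}\}$ spans a flat of rank $i-1$, so the independent set $\{e_1',\ldots,e_i'\}$ of rank $i$ cannot lie entirely inside it). The cut-property alternative you mention at the end is cleaner and closer in spirit to the paper's appeal to Kruskal; if you want a self-contained version, that is the one to write out.
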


\begin{proof}
    Let $X$ be a point cloud and consider the Vietoris-Rips filtration $\Vr_{\bullet}(X)$. Adding a $1$-simplex $\sigma$ that kills a degree-$0$ homology class means that it joins two distinct connected components that are now merged. 
    By Lemma \ref{can-only-birth-or-kill} this means a $1$-cycle is not formed and by Kruskal's algorithm \cite{kruskal1956shortest} for building a minimum spanning tree, $\sigma$ would be added to the $\mst(X)$. Thus the $\mst(X)$ described in Definition \ref{def-second-mst-definition} is just the minimum spanning tree that would be constructed from Kruskal's algorithm, and thus is indeed a minimum spanning tree. 
\end{proof}

From now on we refer to the graph $G$ as `the' minimum spanning tree $\mst(X)$.
We know the edges of $\mst (X)$ correspond to the deaths of degree-0 homology classes  and we also know that every 1-simplex \emph{not} in $\mst (X)$ corresponds to the birth of a degree-1 homology class from Lemma \ref{can-only-birth-or-kill}. We will later show in Lemma \ref{RNG-lemma-3} that most of these degree-$1$ homology classes have trivial persistence.

\subsection{The Relative Neighborhood Graph}
\label{section-rng}

Relative neighborhood graphs were invented by \cite{ToussaintRNG}, who  proposed them as a graph that could be constructed from a point-set which would match the human perception of the shape of the point-set. 

\begin{definition}[Relative neighborhood graph]
\label{def-rng}
Consider a point cloud $X \in \mathbb{R}^D$. The Relative Neighborhood Graph of $X$, denoted $\rng (X)$ is the graph with vertices $X$ and edges $E =\{\langle yz \rangle \;|\; \lune (\langle yz \rangle ) = \emptyset\}$.

\end{definition}

The following lemma is an extension of Theorem 1 and 2 from \cite{ToussaintRNG} which only apply to point clouds with unique pairwise distances in the plane. 
In \cite{jaromczyk1992relative} they claimed Lemma \ref{lemma-mst-in-rng} holds for any Euclidean point cloud, even those with non-unique pairwise distances. However, the definition of $\rng (X)$ in \cite{jaromczyk1992relative} differs from the definition we give as it  uses Definition \ref{definition-traditional-lune} for the lune but replaces $<$ with $\leq$. Here, we give a proof using our modified definition of the relative neighborhood graph.

\begin{lemma}
\label{lemma-mst-in-rng}
Consider a point cloud $X \subset \R^D$, then 

\begin{equation}
\mst (X) \subset \rng (X) 
\end{equation}

\end{lemma}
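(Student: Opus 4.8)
The plan is to prove the three inclusions $\mst(X)\subset\rng(X)$, $\rng(X)\subset\mathrm{GG}(X)$, and $\mathrm{GG}(X)\subset\mathrm{DT}(X)$ separately, each by contraposition. Since the paper's $\lune$ and $\rng$ are defined through the total order $<$ of Definition~\ref{binary-relation-on-simplices} rather than through the metric, the only real care needed beyond the classical arguments of \cite{ToussaintRNG,jaromczyk1992relative} is to make sure every metric inequality that arises is \emph{strict}, so that it upgrades to a comparison in $<$.

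For $\mst(X)\subset\rng(X)$ I would argue: suppose $\langle yz\rangle\notin\rng(X)$, so by Definition~\ref{def-rng} there is a point $x\in\lune(\langle yz\rangle)$, i.e.\ $\langle yx\rangle<\langle yz\rangle$ and $\langle zx\rangle<\langle yz\rangle$. By Lemma~\ref{second-def-is-actually-a-mst}, $\mst(X)$ is precisely the tree produced by Kruskal's algorithm applied to the $1$-simplices in the order $<$. When $\langle yz\rangle$ is processed, both $\langle yx\rangle$ and $\langle zx\rangle$ have already been processed, and whether each was added or rejected, afterwards $y$ and $x$ lie in a common component and so do $z$ and $x$; since components only ever merge, $y$ and $z$ are already connected when $\langle yz\rangle$ is reached, so $\langle yz\rangle$ closes a cycle and is rejected. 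Hence $\langle yz\rangle\notin\mst(X)$.

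For $\rng(X)\subset\mathrm{GG}(X)$ I would again take the contrapositive: if $\langle yz\rangle\notin\mathrm{GG}(X)$, then by \eqref{GGcondition} there is $x\in X$ with $d\!\left(x,\tfrac{y+z}{2}\right)<\tfrac{d(y,z)}{2}$, and $x\notin\{y,z\}$ since $y,z$ lie on the boundary sphere, not its interior. By Thales' theorem this strict containment forces $\Angle(yxz)>\tfrac{\pi}{2}$, and then the law of cosines gives $d(y,z)^2>d(x,y)^2+d(x,z)^2$, so $d(x,y)<d(y,z)$ and $d(x,z)<d(y,z)$ strictly; comparing diameters yields $\langle yx\rangle<\langle yz\rangle$ and $\langle zx\rangle<\langle yz\rangle$, hence $x\in\lune(\langle yz\rangle)$ and $\langle yz\rangle\notin\rng(X)$.

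For $\mathrm{GG}(X)\subset\mathrm{DT}(X)$: let $\langle yz\rangle\in\mathrm{GG}(X)$, so the open ball $B_0$ centred at $\tfrac{y+z}{2}$ of radius $\tfrac{d(y,z)}{2}$ meets $X$ in nothing and has $y,z$ on its boundary. I would then grow this ball while keeping $y$ and $z$ on its boundary --- translating the centre along the perpendicular bisector hyperplane of $\overline{yz}$ --- until the boundary first captures $D-1$ more points of $X$, producing an empty open $D$-ball $B(z_0,\rho)$ with $D+1$ points of $X$ on its boundary including $y$ and $z$, hence a $D$-simplex of $\mathrm{DT}(X)$ with $\langle yz\rangle$ as a face; equivalently, lifting $X$ to the paraboloid in $\R^{D+1}$, the empty Gabriel ball supplies a hyperplane supporting $\mathrm{conv}(\hat X)$ through $\hat y,\hat z$, making $\{\hat y,\hat z\}$ a lower face and $\langle yz\rangle$ a Delaunay edge. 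I expect this last inclusion to be the main obstacle: one must argue rigorously that the growing ball actually catches $D-1$ additional points (controlling what happens as the centre runs off to infinity) and reconcile the argument with the paper's deliberately loose, possibly non-embedded definition of $\mathrm{DT}(X)$ --- though, as observed after Definition~\ref{def-delaunay-triangulation}, cospherical degeneracies are harmless since $\mathrm{DT}(X)$ then contains all competing $D$-simplices and hence all their edges, so the paraboloid-lifting formulation together with the standard empty-ball characterisation of Delaunay faces is the cleanest route.
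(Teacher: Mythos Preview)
Your proposal is correct and follows essentially the same route as the paper: contraposition for $\mst(X)\subset\rng(X)$ via the filtration order, contraposition for $\rng(X)\subset\mathrm{GG}(X)$ via strict metric inequalities, and the classical argument for $\mathrm{GG}(X)\subset\mathrm{DT}(X)$. Two minor remarks: for the middle inclusion the paper obtains $d(x,y)<d(y,z)$ directly from the triangle inequality $d(x,y)\le d\!\left(x,\tfrac{y+z}{2}\right)+d\!\left(\tfrac{y+z}{2},y\right)<d(y,z)$, which is shorter than your Thales/law-of-cosines detour; and for the last inclusion the paper simply defers to the classical result rather than reproving it, so your worry about the growing-ball argument and degeneracies, while well-founded, is more than the paper itself undertakes.
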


\begin{proof}

We show $ \mst (X) \subset \rng (X)$ by showing the contrapositive. 
Suppose the edge $ \langle yz \rangle \notin \rng (X)$.
This means there is a point $x\in X$ with  $x \in \lune (\langle yz \rangle )$.
From our definition of $\lune (\langle yz \rangle)$ we have that $\langle xz \rangle, \langle xy \rangle < \langle yz \rangle$ in the filtration ordering. 
This means that $y$ and $z$ are part of the same connected component before $ \langle yz \rangle$ is added to the filtration and  it follows that $\langle yz \rangle$ cannot be part of $\mst (X)$.  
\end{proof}

Note that both the $\mst (X)$ and $\rng (X)$ can be defined when $X$ is any finite metric space, and that the inclusion $\mst (X) \subset \rng (X)$ still holds in this setting. 

The next lemma establishes the importance of edges which are in $\rng (X)$ but are not in $\mst (X)$. 

\begin{lemma}
\label{RNG-lemma-3}
Consider a point-cloud $X$. Then $\rng (X)$  contains all $1$-simplices that give birth to a degree-1 homology class corresponding to a persistence pair which is \emph{not} an apparent pair. Furthermore there is a one-to-one correspondence between the $1$-simplices in $\rng (X) \setminus \mst (X)$ and the homology classes that correspond to a persistence pair which is \emph{not} an apparent pair.  
\end{lemma}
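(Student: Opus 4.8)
The plan is to exhibit the asserted correspondence explicitly as the map sending a degree-$1$ homology class to the $1$-simplex that gives birth to it, and to reduce the whole statement to one equivalence concerning a single edge.

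First I would invoke the dichotomy supplied by Lemma~\ref{can-only-birth-or-kill}: every $1$-simplex of the simplex-wise Vietoris--Rips filtration either kills a degree-$0$ class or gives birth to a degree-$1$ class, and by Lemma~\ref{second-def-is-actually-a-mst} the edges of the first kind are exactly those of $\mst(X)$. Thus the birth $1$-simplices are precisely the edges of $\vr^{1}_{\infty}(X)\setminus\mst(X)$, each giving birth to exactly one class, so $[\gamma]\mapsto(\text{birth simplex of }[\gamma])$ is a bijection from the degree-$1$ classes onto $\vr^{1}_{\infty}(X)\setminus\mst(X)$. Hence it suffices to prove: for $\langle yz\rangle\notin\mst(X)$, the persistence pair of the class born at $\langle yz\rangle$ is an apparent pair if and only if $\lune(\langle yz\rangle)\neq\emptyset$, equivalently if and only if $\langle yz\rangle\notin\rng(X)$. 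Granting that, the birth simplices of the non-apparent pairs are exactly $(\vr^{1}_{\infty}(X)\setminus\mst(X))\cap\rng(X)=\rng(X)\setminus\mst(X)$, using $\mst(X)\subseteq\rng(X)$ from Lemma~\ref{lemma-mst-in-rng}; restricting the bijection above then yields the stated one-to-one correspondence, and the first assertion of the lemma follows a fortiori.

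One direction of the key equivalence is immediate. If $(\langle yz\rangle,\tau)$ is an apparent pair then, by Definition~\ref{apparent-pair}, $\langle yz\rangle$ is the last-added facet of the $2$-simplex $\tau$; writing $\tau=\langle wyz\rangle$, the remaining two facets satisfy $\langle wy\rangle<\langle yz\rangle$ and $\langle wz\rangle<\langle yz\rangle$, which is precisely the condition $w\in\lune(\langle yz\rangle)$ from Definition~\ref{def-lune}, so the lune is nonempty. For the converse, suppose $\lune(\langle yz\rangle)\neq\emptyset$ (which already forces $\langle yz\rangle\notin\mst(X)$, hence a birth, by Lemma~\ref{lemma-mst-in-rng}); fix $x\in\lune(\langle yz\rangle)$ and set $r=\diam(\langle yz\rangle)$. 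From $\langle xy\rangle,\langle xz\rangle<\langle yz\rangle$ we get $d(x,y)\le r$ and $d(x,z)\le r$, so $\diam(\langle xyz\rangle)=r$ and $\langle yz\rangle$ is the last-added facet of $\langle xyz\rangle$; moreover $\langle xy\rangle$ and $\langle xz\rangle$ are in the filtration before $\langle yz\rangle$, so $\partial\langle xyz\rangle$ is a cycle through $\langle yz\rangle$ that appears exactly at the birth and thus represents the born class, which therefore dies no later than scale $r$. What remains is to promote this to the two conditions of Definition~\ref{apparent-pair}: that the death simplex of $\langle yz\rangle$ is the \emph{first}-added $2$-cofacet $\sigma^{*}$ of $\langle yz\rangle$, and that $\langle yz\rangle$ is the last-added facet of $\sigma^{*}$.

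I expect this last step to be the main obstacle. Identifying the death simplex of $\langle yz\rangle$ and verifying the two facet/cofacet conditions amounts to a purely combinatorial claim about how the length-lexicographic tie-break of Definition~\ref{binary-relation-on-simplices} orders the $2$-cofacets of $\langle yz\rangle$, and it is delicate precisely because, without distinctness of the pairwise distances of $X$, several $2$-cofacets of $\langle yz\rangle$ may share the diameter $r$. When $X$ has distinct pairwise distances the statement is exactly the content of Lemma~\ref{unique-distances-lemma}, so the natural route is to settle that case first and then handle a general finite metric space by a careful order-respecting argument that tracks which $2$-simplex the matrix reduction pairs with $\langle yz\rangle$ and shows it must be the first $2$-cofacet of $\langle yz\rangle$ whose largest facet is $\langle yz\rangle$. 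That bookkeeping is where I would concentrate the effort.
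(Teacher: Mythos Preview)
Your argument for the direction ``apparent pair $\Rightarrow$ nonempty lune'' (equivalently, ``$\langle yz\rangle\in\rng(X)\setminus\mst(X)\Rightarrow$ not part of an apparent pair'') matches the paper's proof exactly: the paper also writes the third vertex of the apparent $2$-simplex as $x$ and observes $\langle xy\rangle,\langle xz\rangle<\langle yz\rangle$, hence $x\in\lune(\langle yz\rangle)$. That direction is all the paper actually proves.

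The converse direction you single out as ``the main obstacle'' is not established in the paper's proof at all, and your caution there is warranted: without the distinct-distances hypothesis the converse is \emph{false}. Take $X=\{1,2,3,4\}$ with $1,2,3$ the vertices of an equilateral triangle of side $r$ and $4$ its centre, so $d(i,4)<r$ for $i=1,2,3$. Then $\rng(X)=\mst(X)=\{\langle14\rangle,\langle24\rangle,\langle34\rangle\}$, so $\rng(X)\setminus\mst(X)=\emptyset$, yet the reduced boundary matrix gives the degree-$1$ persistence pairs $(\langle12\rangle,\langle124\rangle)$ and $(\langle13\rangle,\langle134\rangle)$, neither of which is apparent (for each, $\langle123\rangle$ is the first cofacet of the birth edge, but its last-added face is $\langle23\rangle$). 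Thus there are two non-apparent degree-$1$ pairs but no edges in $\rng(X)\setminus\mst(X)$, so the claimed bijection cannot hold in this generality.

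In short: the part of your proposal that succeeds coincides with the paper's proof; the part you flagged as difficult is simply omitted in the paper, and the ``careful order-respecting argument'' you envisage for the non-unique-distances case cannot be completed because the statement fails there. Your instinct to fall back on Lemma~\ref{unique-distances-lemma} for the distinct-distances case is the right move, and that hypothesis is where the full one-to-one correspondence genuinely holds.
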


\begin{proof}
Since $\mst (X)$ consists of all the 1-simplices that correspond to the death of a degree-0 homology class it follows that the $1$-simplices of $\rng (X) \setminus \mst (X)$ correspond to births. Thus we  need to show that these $1$-simplices cannot be part of an apparent pair. Consider a $1$-simplex $\langle yz \rangle \in \rng (X) \setminus \mst (X)$. Suppose for contradiction that $\langle yz \rangle$ was part of an apparent pair $(\langle yz \rangle, \sigma)$. From Definition \ref{apparent-pair} we know that $\sigma$ must take the form $\langle xyz \rangle$ since $\langle yz \rangle$ needs to be the face of $\sigma$ that appears latest in the filtration. From the definition of the simplex-wise Vietoris-Rips filtration this means that $\langle xy \rangle, \langle xz \rangle < \langle yz \rangle$. Thus $x \in \lune (\langle yz \rangle)$ which contradicts the fact that $\langle yz \rangle \in \rng (X)$. 

We now show that all $1$-simplices not in $\rng(X)$ must be part of an apparent pair. Consider $\langle yz \rangle \notin \rng(X)$. Then let $x \in \lune (\langle yz \rangle)$ such that $\langle x \rangle$ appears in the filtration before all other $\langle x' \rangle, x'\in \lune(\langle yz \rangle)$. Then $(\langle yz \rangle, \langle yzx \rangle)$ is an apparent pair.  
\end{proof}

Note that a related result appears in \cite{smith2024generic} and is stated in terms of short, medium and long edges of a filtration; Lemma 
\ref{RNG-lemma-3} was established independently of their work.

Lemma \ref{RNG-lemma-3} combined with Lemma \ref{unique-distances-lemma} means that we know the birth values for degree-1 Vietoris-Rips persistence intervals of non-apparent pairs simply by constructing the $\rng (X)$ and $\mst (X)$. The birth values of the degree-1 Vietoris-Rips persistence intervals of the non apparent pairs will be the diameters of the $1$-simplices in $\rng(X)\setminus \mst(X)$.  In the case of unique pairwise distances, this means we have the birth values for all intervals of positive length.  In the case of non-unique pairwise distances, there can be non-apparent pairs of simplices with the same birth and death values. 
In either case, knowing the number of non-apparent birth values gives us a stopping criterion when building the Reduced Vietoris-Rips complex: once the correct number of death simplices is found, no further edges need to be analysed when computing degree-1 persistent homology. 

\subsection{$kd$-Trees}

In this section we briefly recall the notion of a $kd$-tree data structure and the reader who desires more details is directed towards \cite{toth2017handbook}. 
The use of $kd$-trees is restricted to point sets in Euclidean space and is one reason we have introduced this assumption for $X$ in this paper.  

Given a point cloud $X \subset \R^D$, a $kd$-tree is a data structure that facilitates efficient geometric searching within $X$. A $kd$-tree of $X$ is a balanced binary tree with nodes formed by the points of $X$. Each node of the $kd$-tree partitions its descendants according to their coordinate values along a given axis, changing the ordinate used as we progress through layers of the tree. $kd$-trees are particularly useful for answering ``nearest neighbor queries" and ``radius search queries", as required when testing whether there are points in the lune of an edge.   
Recall that a nearest neighbor query is one where a point $z \in \R^D$ is given and the point from $X$ closest to $z$ is returned. A radius search is when a point $z$ and a search radius $R$ are given and all points $x\in X$ such that $d(x,z) < R$ are returned.

\subsection{Algorithms and complexity analysis }
\label{algorithms-and-complexity-analysis}

Let us examine the algorithms for $\rng (X)$ and their complexities. Supowit \cite{supowit_RNG} presents an algorithm which finds $\rng (X)$ in $O\left(n \log (n)\right)$ time if $X\subset \mathbb{R}^2$ by first computing $\mathrm{DT} (X)$.  Supowit also presents another algorithm which finds $\rng (X)$ in $O(n^2)$ time if $X \subset \mathbb{R}^D$ for $D \geq 3$. For the latter algorithm, there is an implicit dependence on the dimension $D$. Let $c(D)$ be the minimum number of radius-$\frac{1}{2}$ balls needed to cover the unit sphere $S^{D-1}$. Then  the complexity of Supowit's second algorithm is actually $O(c(D)n^2)$. A lower bound $c(D) > D^{\frac{3}{2}} \cdot (2)^{D-1}$ was given in \cite{verger2005covering} and thus, despite its quadratic dependence on $n$, the complexity is at least exponential  in the dimension  $D$. In \cite{Agarwal1992RelativeNG}, a randomised algorithm is given which computes the $\rng (X)$ in expected time $O(n^{2(1-\frac{1}{D+1})+\epsilon})$ where $\epsilon > 0$ is arbitrarily small. The algorithm we utilise in our implementation is given in \cite{r10_rng} and has a complexity of $O(n^2)$ and performs better than Supowit's algorithm in practice. 

One can always find $\mst (X)$ in $O(n^2 \log (n))$ time since there are $O(n^2)$ edges in the complete graph with vertex set $X$.  

Lastly, we discuss the algorithms for $kd$-trees and their complexities. For a point-cloud $X \subset \mathbb{R}^D$ the complexity of building the $kd$-tree is $O(n\log (n))$ \cite{cormen2022introduction}. Searching for the $k$ nearest neighbors of a given query point has an average complexity $O(k\log (n))$ and worse case complexity $O(kn)$.

\section{An algorithm for computing degree-1 Vietoris-Rips persistent homology}
\label{Algorithm}

In this section we discuss our algorithm, and its implementation $\mathsf{EuclideanPH1}$ to compute $\ph{1}(X)$ for a point cloud $X$ in Euclidean space, using the Reduced Vietoris-Rips complex. We will first briefly describe some differences between $\mathsf{EuclideanPH1}$ and $\mathsf{Ripser}$. 
Then we will give a high level description of the algorithm underpinning $\mathsf{EuclideanPH1}$, highlighting the main components and then we  give a detailed explanation of the implementation used for each part of the algorithm. 

There are a few main differences between $\mathsf{EuclideanPH1}$ and $\mathsf{Ripser}$ \cite{Ripser}. One of the biggest differences is that $\mathsf{EuclideanPH1}$ does not compute cohomology, instead computing homology directly. Another difference is $\mathsf{Ripser}$ will analyse all $1$-simplices up to the radius of the bounding sphere of the point cloud (or up to a user chosen maximum parameter value), even if there are no more non-apparent pairs to be found. In the case of $\mathsf{EuclideanPH1}$, it stops after finding the last persistent pair corresponding to a non-apparent pair. There is also a difference in the number of simplices utilized in each Algorithm. $\mathsf{Ripser}$ utilizes an indexing system which assigns a natural number to each simplex in the filtration up to the maximum diameter. With larger point clouds, the largest such index can be $O(n^3)$, and this can cause issues since it can lead to numbers larger than the machine can deal with. In the case of $\mathsf{EuclideanPH1}$, we also assign an index to simplices, but since we only use $O(n^2)$ simplices, this is less of an issue for $\mathsf{EuclideanPH1}$. 

\subsection{Outline of $\mathsf{EuclideanPH1}$}
\label{high-level-description-of-the-algorithm}

$\mathsf{EuclideanPH1}$ proceeds by analysing each 1-simplex in order of increasing length with ties broken using Definition~\ref{binary-relation-on-simplices}. For each 1-simplex, we find its lune and select a point to represent each connected component, in effect constructing the lune function; these are used to build the Reduced Vietoris-Rips complex. The matrix reduction and persistence pairing is performed as each 2-simplex is added.  The iteration ceases when we have found the pre-determined number of homology classes that do not correspond to apparent pairs.

\subsubsection*{Initialise data structures}

The input is a list of $n$ points, $X \subset \mathbb{R}^D$, and an integer $k$. 
The value of $k$ can be set by the user, but defaults to $\sqrt{n}$. 
Guidelines and considerations on what constitutes an ideal value of $k$ are discussed further in Section \ref{implementation-details}.

The first procedure is building the $kd$-tree for $X$. 
We then use this to initialise two further data structures that assist in enumerating 1-simplices by increasing length. These are a table, $N$, listing up to $k$ nearest neighbors for each point in $X$ and a minimum heap, $H$, for efficient sorting of 1-simplices. Details about minimum heap data structures are given in~\cite{williamsalgorithm}.

\subsubsection*{Build the RNG}

We next compute $\rng(X)$ of the point-cloud $X$ to establish the stopping condition for the main loop. 
The quantity $\mathsf{total\_bars}$ is calculated as the number of edges in $\rng(X)$ minus $(n-1)$, the number of edges that must be in any $\mst(X)$; this 
tells us the number of persistence intervals that do not correspond to an apparent pair. 

Initialise $\mathsf{death\_count} = 0$ to count the number of non-apparent pairs found while iterating the main loop. 

\subsubsection*{Start of Main Loop}

\subsubsection*{Step 1: Find the next 1-simplex}

Find the next 1-simplex $\langle yz \rangle$ in the simplex-wise filtration ordering. This is facilitated by the minimum heap $H$ which contains the next candidate 1-simplex for each point, and stores these sorted by the total order of Definition \ref{binary-relation-on-simplices}. 

\subsubsection*{Step 2: Find the lune and compute its connected components}

Find the lune of the 1-simplex $\langle yz \rangle$ from Step 1. 
If the lune is empty return to Step 1, otherwise continue. 
Now find $c$, the number of connected components of $\lune (\langle yz \rangle)$ and define the lune function $L$. 
That is, for $i=1,...,c$, choose a point $x_{i}$ from each connected component and set $L(\langle yz \rangle) = \{x_1,...,x_c\}$. 

\subsubsection*{Step 3: Construct a ``column'' for each connected component}

If $\mathrm{lune}(\langle yz \rangle)$ has $c=1$ then we add one column to the boundary matrix corresponding to the boundary of $\langle yzx_{1} \rangle$. This column does not need to be reduced as its lowest~1  corresponds to the row-index of $\langle yz \rangle$ which has only just been added to the filtration. 
Return to Step 1. 

Otherwise, if $\mathrm{lune}(\langle yz \rangle)$ has $c\geq 2$ then we need to add one column for each 2-simplex $\langle yzx_i  \rangle$, in increasing lexicographic order.  With the exception of the first column added, all other columns need to be reduced.   Proceed to Step 4. 

As will be elaborated in Section~\ref{implementation-details}, we don't actually construct a matrix but store columns that require reduction in a hash table.

\subsubsection*{Step 4: Perform matrix reduction and persistence pairing}

In the case that $c\geq 2$, each of $c-1$ matrix columns must be reduced. 
If a column reduces to zero, that 2-simplex has given birth to a degree-2 homology class so $\mathsf{death\_count}$ remains the same.
Otherwise, if a lowest~1 remains then its location in the matrix at row-$i$ and column-$j$ defines a non-apparent persistence pair and we increment $\mathsf{death\_count}$ by 1. 

The barcode interval end points are found as the diameters of the appropriate birth and death simplices (i.e., $\sigma_i$ and $\sigma_j$). If the persistence of this interval is 0 then it is not added to the persistence barcode. 

\subsubsection*{Iteration test}

If $\mathsf{death\_count} = \mathsf{total\_ bars}$ then end the iteration, otherwise proceed to Step 1 and execute the main loop again.  
 
\subsubsection*{Output}

Once all non-apparent pairs are found, the list of barcode intervals with non-trivial persistence is returned.  

\subsection{Implementation details}
\label{implementation-details}

In this section we document our implementation of the Reduced Vietoris-Rips algorithm in the $C{++}$ code, $\mathsf{EuclideanPH1}$ \cite{EuclideanPH1} which accepts point clouds $X \in \mathbb{R}^D$.
We also briefly discuss the computational time complexity of each step. In what follows, $B(x,r)$ denotes the open ball centred at $x$ with radius $r$. For a set $A$, $\overline{A}$ denotes the closure of $A$.

\paragraph{Initialize Data Structures}
\label{initialize-data-structures}
We first construct a $kd$-tree on the points of $X$; this has a time complexity of $O(n\log (n))$. 
In $\mathsf{EuclideanPH1}$ we use the package nanoflann~\cite{blanco2014nanoflann}. 
We then construct a table $N$ with row-$i$ containing nearest neighbors of $\psi^{-1}(i) = x \in X$, recall $\psi$ labelled each $x\in X$ with an index value and was mentioned in Definition \ref{extention-of-psi}. 
To populate row-$i$, we start by listing the $k$ closest neighbors to $x$ and then  
remove those points $y$ having $\psi(y) < \psi(x)$. 
The remaining points in this row are then sorted by increasing distance from $x$, with ties ordered by their index. 
Note that the last row in this table is empty.
The parameter $k$ has a default value of $\sqrt{n}$, but its optimisation is left to the user; we discuss some heuristics below.

Building the table $N$ has time complexity  $O(kn\log (n))$. If one chooses $k = n$, then the construction of $N$ will have complexity bounded by $O(n^2\log (n))$. 

To facilitate the execution of the main loop we need a method that returns the next $1$-simplex in the filtration ordering.
Enumerating and sorting all $1$-simplices by diameter has a complexity of $O(n^2 \log (n))$, which is rather costly. 
Instead, we use the table $N$ and a minimum heap data structure, $H$. Elements of the heap $H$ are tuples $(\psi(y),\psi(z),t,r)$ with $\psi(y) < \psi(z)$. The variable $t$ records the location of $z$ in the $\psi(y)$-row of the table $N$, i.e., $N[\psi(y)][t] = \psi(z)$, and $r = \diam(\langle yz \rangle) = d(y,z)$. The heap is initialised with $n-1$ elements 
$(i, N[i][1] , 1 , r)$, where $r$ is the distance between point $\psi^{-1}(i)$ and $\psi^{-1}(N[i][1])$, the nearest neighbor with larger index. Constructing the initialised minimum heap has complexity $O(n\log (n))$. 

During the main loop we remove the element with smallest value of $r$ (and point indices) and then insert the next candidate 1-simplex from $N$. Inserting a new element into the minimum heap has $O(\log (n))$ complexity. The point of creating and maintaining the table $N$ and the heap $H$ is that they  \emph{may} allow us to avoid computing all pairwise distances. 

This is an opportune point to discuss the choice of $k$. 
Recall that the table $N$ has at most $k$ points listed in each row. 
Suppose that at some point in the iteration of the main loop, we pop the element $(\psi(y),\psi(z),t,r)$ from the heap and find that there are no further neighbors of $y$ stored in $N$. 
We must now supplement the table by finding and sorting further pairwise distances from $y$. 
In our current implementation, if we reach this situation for a given $y$, we compute and sort all additional pairwise distances $d(y, z)$ with $\psi(y) < \psi(z)$. 
If we have to do this for every row, we incur a computational complexity of $O(n^2 \log (n))$ and a memory cost of $O(n^2)$, which we would like to avoid. Let $(\tau, \sigma)$ be the non-apparent persistent pair such that $\sigma$ appears latest in the filtration. Let $\eta $ be the face of $\sigma$ that appears latest in the filtration. Thus one wants to choose $k$ large enough so that $N$ includes both $0$-dimensional faces of the $1$-simplex $\eta$, with one face being in the neighbor list of the other. 
 
Without prior knowledge of the structure of $X$, we do not have any way to determine an optimal value of $k$ except by trial and error. 
In practice, if a given run of the code seems to be using too much memory, it could be that the chosen value of $k$ is too small, causing many extra extensions of the table $N$. 
Conversely, one does not want to choose $k$ \emph{too} large, otherwise $N$ will use more storage than actually required to complete the calculation of $\ph{1}(X)$, hence the implementation uses  the choice of $k = \sqrt{n}$ in the absence of a user chosen value of $k$.

\paragraph{Build the RNG}
The algorithm in \cite{r10_rng} will construct $\rng(X)$ in $O(n^2)$ time. In lower dimensions, one could utilize the Delaunay triangulation in order to obtain $\rng(X)$ faster, however this is not the focus of this paper, since in low dimensional Euclidean space, one can make use of Alpha-shape persistent homology.

\paragraph{Start of Main Loop}
Here we discuss the implementation details of the main loop. 

\paragraph{Step 1: Find the next 1 simplex}
At the start of each iteration, we pop the top element of the minimum heap $H$, $(\psi (y), \psi (z), t, r)$. This element has the smallest value of $r$, with ties broken by lexicographic order of the pair ($\psi(y)$, $\psi(z)$).  
We then insert $(\psi(y), \psi(z'), t+1, r')$ where $z'$ is the $(t+1)$th neighbor of $y$ in the table $N$ and $r' = d(y,z') \geq r$. 
As described above, if $(t+1)$ is larger than the number of neighbors currently stored in $N$, then we extend the row $N[\psi(y)]$ by finding and sorting all points $x$ with $\psi(x) > \psi(y)$. 
Once we have exhausted all possible pairs with $\psi(x) > \psi(y)$ nothing is inserted.

\paragraph{Step 2: Find the lune and compute its connected components}
Just as when building $\rng(X)$, we find $\lune (\langle y z \rangle)$ by using the $kd$-tree to return the points of $X$ in $\overline{B}(y,d(y,z)) \cap \overline{B}(z,d(y,z))$.

We then find the number of connected components of $\lune (\langle yz \rangle)$.
To do this, we perform union-find on the points of $\lune(\langle yz \rangle)$. This will require us to do up to $O(n^2)$ unions on up to $O(n)$ points meaning this step will have time complexity bounded by $O(n^2 \alpha (n))$. This worst case complexity is rather high and applies to \emph{each} $1$-simplex that is analysed. 
Thus it is worth some effort to speed up the analysis of lunes, especially since the overwhelming majority of $1$-simplices will have a lune with one connected component. To speed things up we conduct some preliminary tests based on Lemma~\ref{lens-lemma} that will guarantee that there is only one connected component in the lune.

For a given 1-simplex $\langle yz \rangle$ the first test we do is to check the ball of radius $\frac{1}{2}(2-\sqrt{3})r$ centred at the midpoint $(y+z)/2$. 
This is the ball circumscribed by the boundary of the region  $\{x\in \mathbb{R}^D \;|\; \Angle (yxz) > \frac{5\pi}{6} \}$.
Points of $X$ in this region are in $\lens(\langle yz \rangle)$ (Definition~\ref{lens}), and  Lemma~\ref{lens-lemma} guarantees that $\lune(\langle yz \rangle)$ has a single connected component if $\lens(\langle yz \rangle) \neq \emptyset$. Listing points of $X$ that are in this ball is considerably faster than finding all points of $\lune(\langle yz \rangle)$, and can provide a significant short-cut.  
To define the lune function in this case, we simply take one of the points found within the ball, $x$ say, and form the 2-simplex $\langle xyz \rangle$. 

The next test we implement requires us to find all points within the lune. We then test each point $x$ within the lune to see if $\Angle (yxz) > \frac{5\pi}{6}$. If we find a point $x$ satisfying this condition then we know that $\lens(\langle yz \rangle) \neq \emptyset$ and again deduce that  $\lune (\langle yz \rangle)$ has one connected component. Therefore we stop at the first such point found and form the 2-simplex $\langle xyz \rangle$.

If both of these lens tests come back negative, then we simply perform union-find on the points in the lune as described above. 
Neither of the lens tests will increase the worst case complexity of $O(n^2\alpha (n))$ as both tests have worst case complexity of $O(n)$.

\paragraph{Step 3: Construct a ``column'' for each connected component}
Here we discuss $\mathsf{EuclideanPH1}$'s implementation of the boundary matrix for the simplex-wise filtration 
and explain why the word ``columns'' appears in quotation marks. 

If we \emph{were} building a matrix, then each time 
a $2$-simplex $\langle xyz \rangle$ is added to the filtration we would need to create a column vector with non-zero entries at rows $\phi(\langle xy \rangle), \phi (\langle yz \rangle)$ and $\phi (\langle xz \rangle)$. Recall $\phi$ was defined in Definition \ref{def-VR-total-order}.
The subsequent matrix reduction step searches the previous columns to determine the correct persistence pairing. 
Since this is the only operation required of the boundary matrix, our implementation makes use of hash table $T$ to improve the efficiency of the matrix reduction process.   
We use hash tables \cite{10.1145/356643.356645} via the standard $\mathsf{unordered \_ map}$ structure in $C{++}$.

So, a ``column'' of the boundary matrix is instead represented by a (key, value) pair of $T$. The entries of the value field and the key field depend on how many connected components $\lune (\langle yz \rangle)$ has, this will be explained in more detail in Step 4.
Ultimately, at the end of the algorithm $T$ will consist of the same information as the reduced boundary matrix, albeit in a slightly different form.  

\paragraph{Step 4: Perform matrix reduction and persistence pairing}

We now discuss what we are actually adding to $T$. If $\lune (\langle yz \rangle)$ is empty, then no (key, value) pair is added to $T$. Suppose $\lune (\langle yz \rangle)$ has only one connected component and let $L(\langle yz \rangle)$, the lune function of $\langle yz \rangle$, be $\{x \}$. We then add a (key, value) pair to $T$ whose value field is $\{ \phi(\langle xy \rangle), \phi (\langle yz \rangle), \phi (\langle xz \rangle) \}$ and set its key field to  the highest index of the three 1-simplices. This must be $\phi (\langle yz \rangle)$ because $\langle xy \rangle, \langle xz \rangle < \langle yz \rangle$. 
In other words, the value field stores only the non-zero row-indices and the key field stores the row index of the lowest 1. 
In this case of  $\lune (\langle yz \rangle)$ having one component, we do not need to perform any matrix reduction as no previous columns can have a lowest $1$ in the $\phi(\langle yz \rangle)$-th row.

Now suppose that $\lune (\langle yz \rangle)$ has more than one connected component, that is to say $|L(\langle yz \rangle)| \geq 2$. 
For each element, $x \in L(\langle yz \rangle)$ we add a node to $T$ as follows. 
First, we create a (key, value) pair $g_{x}$ which sets the column field as $\{\phi(\langle xy \rangle, \phi (\langle xz \rangle), \phi(\langle yz \rangle) \}$ and the key field as $\phi(\langle yz \rangle)$. If $g_{x}$ is the first such (key,value) pair created then we add it to $T$ without needing to perform any reduction, as no previous columns can have a lowest $1$ in the $\phi(\langle yz \rangle))$-th row. 
Before we add this (key, value) pair to $T$ we perform the matrix reduction procedure as follows. 
We search $T$ to see if there are any columns with the same lowest one (key value) as $g_x$. 
% This is where the benefit of using self-balancing binary search trees is realised.  
If $\langle yzx \rangle$ is the $j$th two-simplex added to the filtration, then since we are using a hash table this takes worst case $O(j)$ time, though in practice it will take $O(1)$ time.  

%as opposed to standard matrix reduction where one would potentially have to check $O(j)$ columns. 
If we find a (key, value) pair with the same key (lowest one) then we ``add'' the value field of this (key, value) pair to the value field of $g_x$. 
Since we are working with coefficients in $\mathbb{Z}_{2}$, ``adding'' in this context is  simply taking the symmetric difference of the two sets of indices. 
We then set the key field of $g_x$ to be the maximum element of the reduced value field. 
We repeat this process until either the key of $g_x$ cannot be found in $T$ or the value field of $g_x$ is empty. 
The first case is equivalent to reducing the column in the standard matrix reduction algorithm until the column has lowest 1 different from all the columns reduced thus far; we add this reduced $g_x$ to $T$. 
In the case that the column field is empty, this means a degree-2 homology class was created by adding the 2-simplex $\langle yzx \rangle$ to the filtration; we do not add anything to $T$ in this case. We will discuss the complexities of constructing and searching $T$ in Section \ref{complexity-analysis}.

\subsection{Overall complexity analysis}
\label{complexity-analysis}

In this section we find an upper bound on the complexity of the algorithm described in sections \ref{high-level-description-of-the-algorithm} and \ref{implementation-details}. 

\paragraph{Worst time complexity bound for  Euclidean point clouds.} We now bound the overall worst case time complexity for the algorithm in the case $X \subset \mathbb{R}^D$ using the discussion in Section \ref{implementation-details}. 
\begin{prop}
\label{overall-time-complexity}
    For a finite point cloud $X\subset \R^D$ with $n$ points the overall time complexity of \textsf{EuclideanPH1} can be bounded above by $O(n^6)$.
\end{prop}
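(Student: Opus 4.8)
The plan is to bound the cost of each phase of \textsf{EuclideanPH1} separately and then take the dominant term, relying on the per-step complexities already established in Section~\ref{implementation-details}. The phases are: (i) initialisation of the $kd$-tree, the neighbor table $N$, and the heap $H$; (ii) construction of $\rng(X)$; and (iii) the main loop, which itself decomposes into Steps 1--4 repeated once per analysed $1$-simplex. For (i), the $kd$-tree costs $O(Dn\log n)$, building $N$ costs $O(kn\log n)$ which in the worst case $k=n$ is $O(n^2\log n)$, and initialising $H$ costs $O(n\log n)$; so initialisation is $O(n^2\log n)$. For (ii), the discussion following the ``Build the RNG'' paragraph gives $O(n^2)$ in $\R^2$ and $O(n^3)$ in $\R^3$; in general Euclidean space one may bound it crudely by examining all $O(n^2)$ candidate edges and $O(n)$ points per edge, i.e. $O(n^3)$, which is already dominated by what follows.

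The heart of the argument is bounding the main loop. First I would bound the number of iterations: each iteration pops one $1$-simplex from $H$, and there are at most $\binom{n}{2}=O(n^2)$ distinct $1$-simplices, so the loop runs $O(n^2)$ times. Next I would bound the cost of a single iteration. Steps 1 and the heap maintenance are $O(\log n)$, possibly plus an $O(n\log n)$ row-extension of $N$. Step 2 (finding the lune and its connected components) is bounded, as computed in Section~\ref{implementation-details}, by $O(n^3)$ in the worst case (the Delaunay triangulation on up to $O(n)$ lune points plus union-find, with the lens pre-tests not increasing this bound). Step 3 is $O(1)$ bookkeeping per $2$-simplex. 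For Step 4, by Lemma~\ref{bounded-connected-components-proof} each lune has at most $O(1)$ connected components (a constant depending only on $D$), so at most $O(1)$ columns/nodes are added per iteration; over the whole run this gives $O(n^2)$ nodes in the AVL tree $T$, and each reduction of a node searches $O(\log j)=O(\log n)$ nodes and performs symmetric differences on column sets of size $O(n^2)$ (a column can, in the worst case, accumulate $O(n^2)$ distinct $1$-simplex indices), so a single Step-4 reduction costs $O(n^2\log n)$. Putting these together, the per-iteration cost is dominated by the $O(n^3)$ of Step 2, and multiplying by the $O(n^2)$ iterations yields an $O(n^5)$ bound for the main loop; absorbing the lower-order initialisation and RNG costs, and allowing the slack implicit in the statement, we conclude the overall time complexity is $O(n^6)$.

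The main obstacle is making sure no hidden factor is undercounted when passing from ``cost per $1$-simplex'' to the global bound --- in particular, confirming that Lemma~\ref{bounded-connected-components-proof} really does cap the number of columns added per iteration by a constant (so that the matrix-reduction work does not blow up the iteration count), and bounding the size of a column's support (the number of stored row-indices) so that the symmetric-difference operations in Step~4 are correctly charged. Once the column-support size is bounded by $O(n^2)$ and the number of reductions per iteration by $O(1)$, the remaining arithmetic is routine: the product $O(n^2)$ iterations $\times\; O(n^3)$ per iteration gives $O(n^5)$, comfortably within the claimed $O(n^6)$, and the generous exponent in the statement leaves room for the $\R^3$ Delaunay cost and any further slack. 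I would therefore present the bound as a sequence of displayed inequalities summing the three phases, note explicitly where Lemma~\ref{bounded-connected-components-proof} is invoked, and remark that the $O(n^6)$ figure is a deliberate overestimate since typical point clouds behave far better.
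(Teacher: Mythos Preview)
Your overall decomposition into initialisation, RNG construction, and main loop matches the paper's approach exactly, and your use of Lemma~\ref{bounded-connected-components-proof} to cap the number of columns added per iteration by a dimension-dependent constant is the right move. However, there is a genuine gap in your bound for Step~4.

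You estimate that a single Step-4 reduction costs $O(n^{2}\log n)$: one AVL search of depth $O(\log n)$ followed by one symmetric difference of $O(n^{2})$-sized index sets. This undercounts the work, because reducing a single column $g_{x}$ is an \emph{iterative} process. After the first symmetric difference the resulting column may again share its lowest~$1$ with some earlier node of $T$, forcing another search and another merge, and so on until either the lowest~$1$ is new or the column is empty. In the worst case this loop runs up to $j$ times, where $j=O(n^{2})$ is the number of nodes already in $T$. The paper accordingly bounds the reduction of one column by $O\bigl(j(\log j + j)\bigr)=O(n^{4})$, not $O(n^{2}\log n)$. With this correction Step~4, not Step~2, is the dominant per-iteration cost; the per-iteration bound becomes $O(n^{4})$, and multiplying by the $O(n^{2})$ iterations gives precisely the stated $O(n^{6})$. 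In other words, the $O(n^{6})$ in the proposition is not slack over an $O(n^{5})$ argument but the actual bound this decomposition yields once the reduction loop is counted correctly.
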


\begin{proof}

First we find the complexity of constructing $N$ and $\rng(X)$. The time complexity of constructing $N$ is bounded above by $O(n^2\log (n))$ and the time complexity of constructing $\rng(X)$ is bounded above by $O(n^2)$. Thus the ``Initialise data structures" and ``Build the RNG" parts of Section \ref{implementation-details} is bounded by $O(n^2\log (n))$. 

Now let us analyse the complexity of the main loop. 
Consider the $j$-th 1-simplex $\langle yz \rangle$ analysed in the algorithm. Popping the $j$-th element from the heap $H$ (and then restoring the heap property) can be done in $O(\log (j))$ time and inserting a new element into the heap  takes $O(\log(j))$ time. Since $j$ is bounded above by $O(n^2)$ it follows this step is bounded by $O(\log (n))$. 

Finding $\lune (\langle yz \rangle)$ has complexity bounded by $O(n)$ and finding the connected components of the lune has complexity $O(n^2 \alpha(n))$, where $\alpha(n)$ is the inverse Ackerman function. Thus Step 2 of Section \ref{implementation-details} has complexity bounded by $O(n^2 \alpha (n))$. 

The complexity of Steps 3 and 4 depend on the number of connected components of $\lune(\langle yz \rangle)$.
If $\lune (\langle yz \rangle)$ is empty then there is nothing more for us to do and we move onto the next iteration of the main loop. 

If $\lune (\langle yz \rangle)$ has one connected component and $L(\langle yz \rangle) = \{x\}$ then we only need to add a (key, value) pair to $T$, the hash table \cite{10.1145/356643.356645} introduced in Step 4 of Section \ref{implementation-details}. The worst case complexity of searching for an element in a self-balancing binary search tree, \cite{guibas1978dichromatic} is lower than that of a hash table. Therefore we will take $T$ to be a self-balancing binary search tree in this analysis. This is despite hash tables being actually utilised in the implementation, since their average time complexity is $O(1)$. 

If $T$ is a self balancing binary search tree, then inserting an element into $T$ has worst case complexity $O((\log(j)))$. Since $j$ is bounded by $O(n^2)$ this step has worst case complexity bounded by $O(\log (n))$ for all $j$. 

If $\lune(\langle yz \rangle)$ has more than one connected component then we need to search $T$ up to $j$ times. Each time we search and find another node with the same lowest 1 we add the value fields of the respective (key, value) pairs, which has complexity bounded by $O(j)$. This results in an overall complexity of $O(j(\log(j) + j))$. If $\lune (\langle yz \rangle)$ has $c$ connected components then we need to do this step $c-1$ times. However, for fixed $D$, $c$ is bounded by a constant by Lemma \ref{bounded-connected-components-proof}, so it follows that the overall complexity for inserting a node in this case is still $O(j(\log (j) + j))$. Since $j$ is no more than $O(n^2)$ it follows that we can bound the complexity of Steps 3 and 4 by $O(n^2(\log (n) + n^2)) = O(n^4)$. 

Regardless of how many connected components the lune has, the complexity of the main loop is bounded by $O(n^4)$. Since the main loop will be executed a maximum of $O(n^2)$ times it follows that the complexity of the whole algorithm can be bounded by $O(n^6)$.
\end{proof}

\section{Experiments: Comparing EuclideanPH1 with $\mathsf{Ripser}$}
\label{sec:experiments}

Here we present timing results for computing $\ph{1}(X)$ of various Euclidean point clouds processed using $\mathsf{Ripser}$~\cite{Ripser} and $\mathsf{EuclideanPH1}$.
All results in this section were obtained using a high-end personal computer with the following specifications. 

\begin{itemize}
            \item Intel Xeon(R) w7-2475X × 40

            \item 128GB DDR5 RAM

            \item OS: 64bit Ubuntu 22.04.5 LTS 
        \end{itemize}

In some experiments, for the machine used, $\mathsf{Ripser}$ was unable to compute $\ph{1}(X)$ for the point clouds listed. This was due to $\mathsf{Ripser}$ exhausting the memory of the machine. 

All runtimes were ``wall-clock" measurements and code was run with no other processes, except those necessary for the operating system to function. While running the code, the machine was not connected to the internet. In all runs of the code, the O3 optimisation was used. The specific version of $\mathsf{Ripser}$ used was 1.2.1 and the specific version of $\mathsf{EuclideanPH1}$ used was the one featured on the following github address \\ \url{https://github.com/musashi-7430251/EuclideanPH1/tree/main}.

\subsection{Solid torus in $\mathbb{R}^3$ embedded in $\mathbb{R}^{10}$}
\label{sec:solid-torus-in-r3-embedded-in-r10-ripser-experiment}
For this set of experiments we generated point-clouds of size $n=1000, 2000, 3000, ..., 25000$ in a solid torus of major radius $3$ and minor radius $1$ in $\mathbb{R}^3$. For each value of $n$ we generated five point-clouds. We then rotated these point-clouds in $\mathbb{R}^{10}$ using the Householder transformation, which preserves distances, with a randomly selected ten dimensional non-zero vector $v$. 

\begin{equation}
    Q(v) = I - \frac{2}{v^{T}v} vv^{T}
\end{equation}

This results in a $10$ dimensional point-cloud isometric to the original point-cloud where the last seven co-ordinates are not always zero, allowing us to test for ambient co-ordinate dimension effects. For these point-clouds, $\ph{1}(X)$ was computed using $\mathsf{Ripser}$ and $\mathsf{EuclideanPH1}$. The runtimes for these computations are shown in Figure \ref{solid-torus-in-r10}. In every run of $\mathsf{EuclideanPH1}$ $k$ was set to $100$. 

\begin{figure}[h!]

    \begin{subfigure}[t]{.45\textwidth}
    \includegraphics[scale = 0.45]{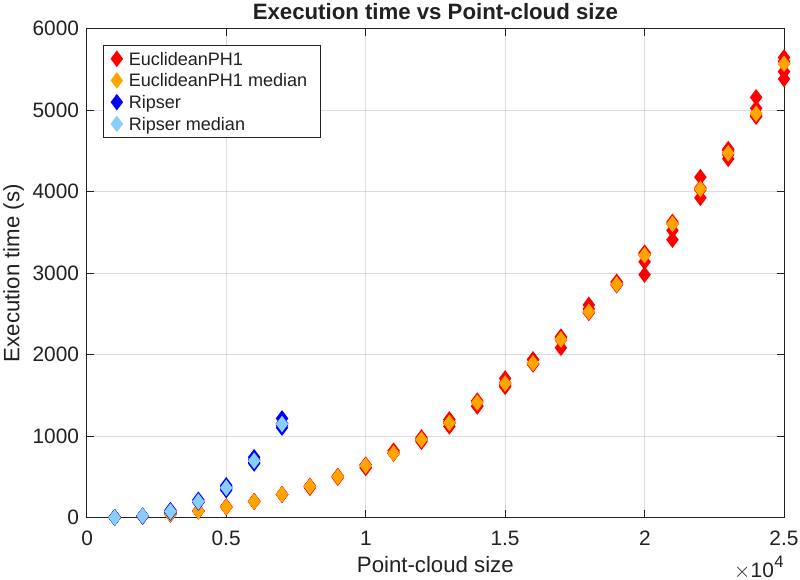}
    \caption{Execution time against point-cloud size for the solid torus embedded in $\mathbb{R}^{10}$. $\mathsf{Ripser}$ could not compute $\ph{1}(X)$ for point clouds with more than 7000 points as it exhausted the memory of the machine.}
    \label{solid-torus-in-r10}
    \end{subfigure}
    \hfill
    \begin{subfigure}[t]{.45\textwidth}
    \includegraphics[scale = 0.45]{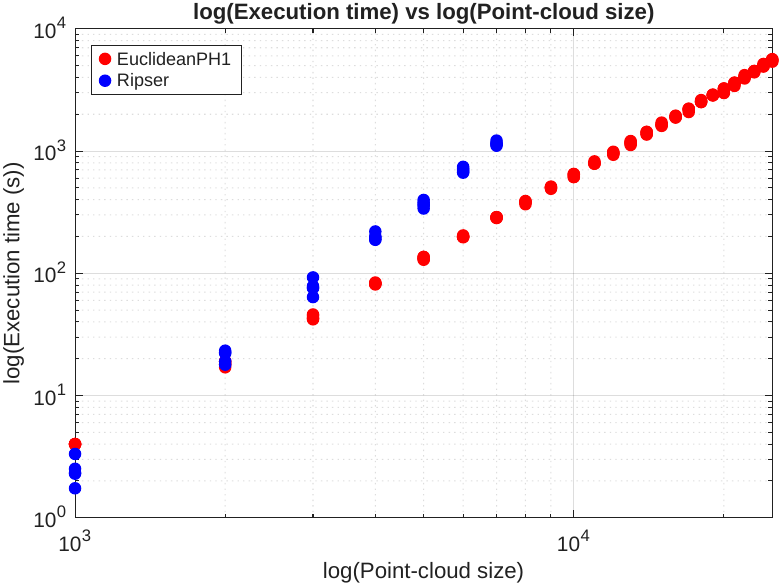}
    \caption{The log-log graph of execution time against point-cloud size for the solid torus data set. MATLAB gives the slope of the line of best fit through the $\mathsf{EuclideanPH1}$ points as $2.26$. MATLAB gives the slope of the line of best fit through the $\mathsf{Ripser}$ points as $3.17$.}
    \label{log-log-solid-torus}
\end{subfigure}
\caption{Runtimes and corresponding log-log plot for the solid tori embedded in $\mathbb{R}^{10}$. }
\label{solid-torus-runtimes-and-log}
\end{figure}

\subsection{Noisy hollow $2$-torus in $\mathbb{R}^{3}$ embedded in $\mathbb{R}^{10}$.}
\label{sec:noisy-hollow-torus-in-r3-embedded-in-r10-ripser-experiment}
For this set of experiments point-clouds of size $n= 1000, 2000, 3000, ..., 15000$ were generated on a hollow torus of major radius 3 and minor radius 1 and then noise was added. For each value of $n$ five point-clouds were generated. Like for the solid torus, these point-clouds were then rotated in $\mathbb{R}^{10}$ using the householder transformation with a randomly selected non-zero ten dimensional vector $v$. For these point-clouds, $\ph{1}(X)$ was computed using $\mathsf{Ripser}$ and $\mathsf{EuclideanPH1}$. The runtimes for these computations are shown in Figure \ref{hollow-torus-in-r10}. In every run of $\mathsf{EuclideanPH1}$ $k$ was set to $100$. 

\begin{figure}[h!]

    \begin{subfigure}[t]{.45\textwidth}
    \includegraphics[scale = 0.45]{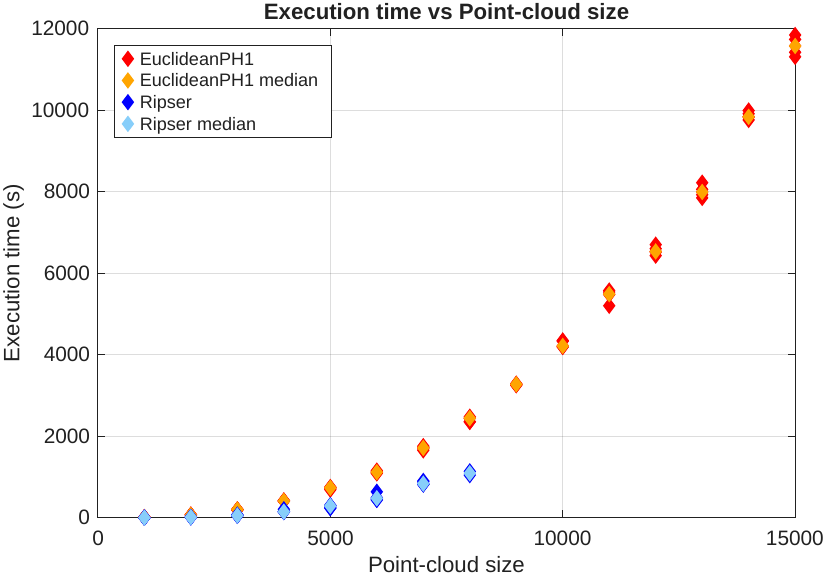}
    \caption{Execution time against point-cloud size for the hollow $2$-torus embedded in $\mathbb{R}^{10}$. At \\$n=8000$, $\mathsf{Ripser}$ completed 2 out of 5 runs.}
    \label{hollow-torus-in-r10}
    \end{subfigure}
    \hfill
    \begin{subfigure}[t]{.45\textwidth}
    \includegraphics[scale = 0.45]{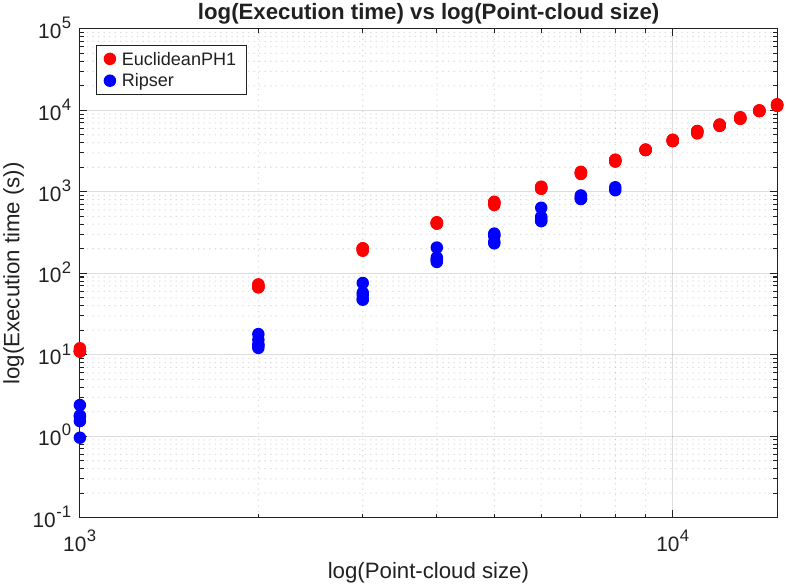}
    \caption{The log-log graph of execution time against point-cloud size for the hollow $2$-torus data set. MATLAB gives the slope of the line of best fit through the $\mathsf{EuclideanPH1}$ points as $2.56$. MATLAB gives the slope of the line of best fit through the $\mathsf{Ripser}$ points as $3.20$.}
    \label{log-log-hollow-torus}
\end{subfigure}
\caption{Runtimes and corresponding log-log plot for the hollow $2$-tori embedded in $\mathbb{R}^{10}$. For $n \geq 9000$, $\mathsf{Ripser}$ could not compute $\ph{1}(X)$ due to exhausting the memory of the machine.}
\label{hollow-torus-runtimes-and-log}
\end{figure}

\subsection{The noisy $5$-torus}
\label{sec:noisy-cursed-torus-experiments-ripser}
%One may have qualms with the previous data sets, since they are inherently $3$-dimensional data sets. 
One might argue that the previous data sets are not really high-dimensional as the point-clouds are still intrinsically three-dimensional. 
Here we compare $\mathsf{Ripser}$ against $\mathsf{EuclideanPH1}$ on an intrinsically higher dimensional data set. For this set of experiments, point-clouds of size \\ $n = 1000, 2000, ..., 10000$ were generated by sampling from circles of radii $0.5, 1.0, 1.5, 2.0, 2.5$, with each circle using two of the the ten co-ordinates and then uniform random noise with  range $[0, 0.05]$ was added to each coordinate.

The noise was added by taking a value from the uniform distribution over $[0, 0.05]$ to each ordinate. For each value of $n$, five point-clouds were generated. In Figure \ref{cursed-torus-pictures}, a 4D projection of the data set is shown, with color representing the fourth dimension. 

\begin{figure}[h!]

    \begin{subfigure}[t]{.45\textwidth}
    \includegraphics[scale = 0.25]{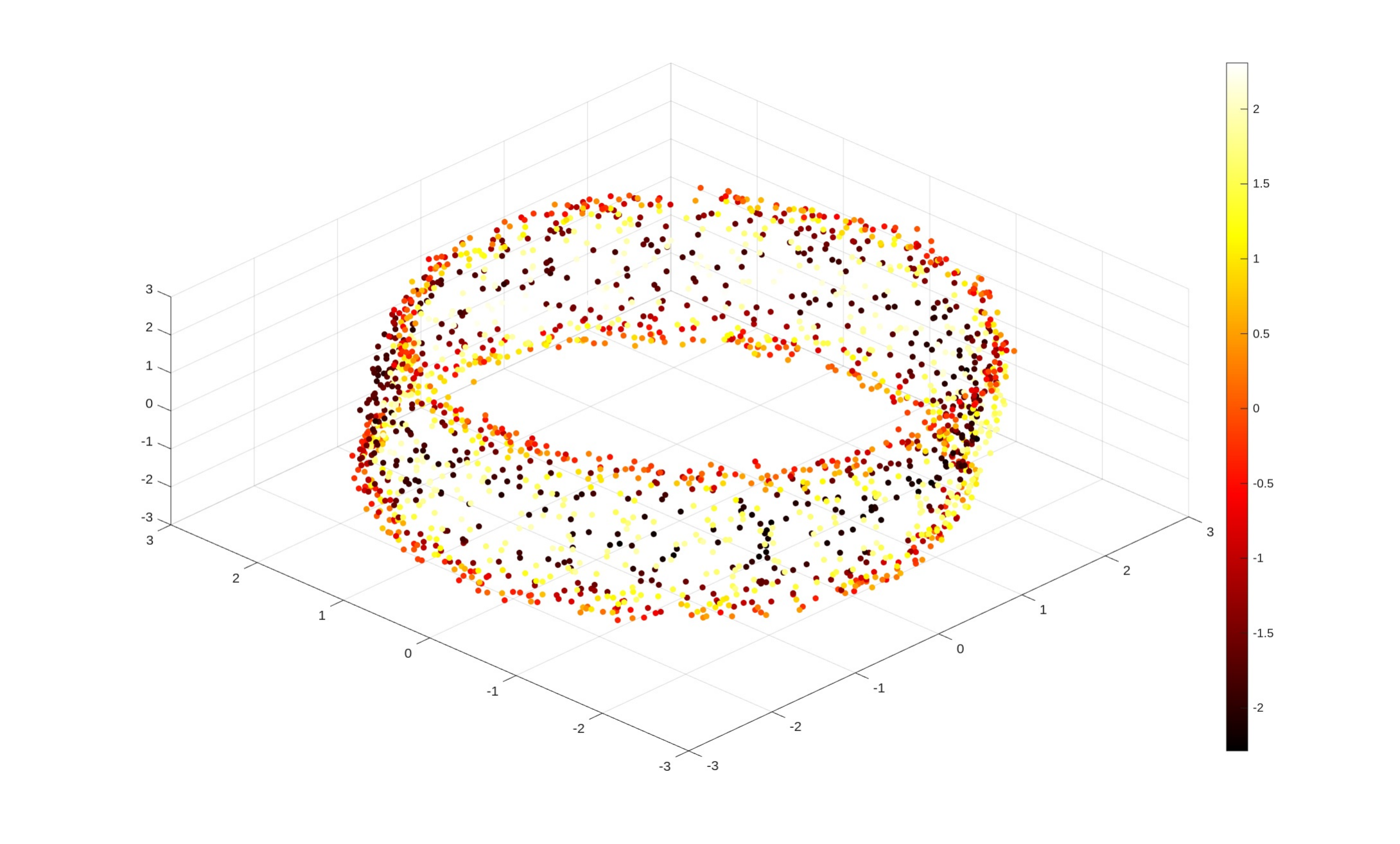}
    \label{cursed-torus-view-1}
    \end{subfigure}
    \hfill
    \begin{subfigure}[t]{.45\textwidth}
    \includegraphics[scale = 0.25]{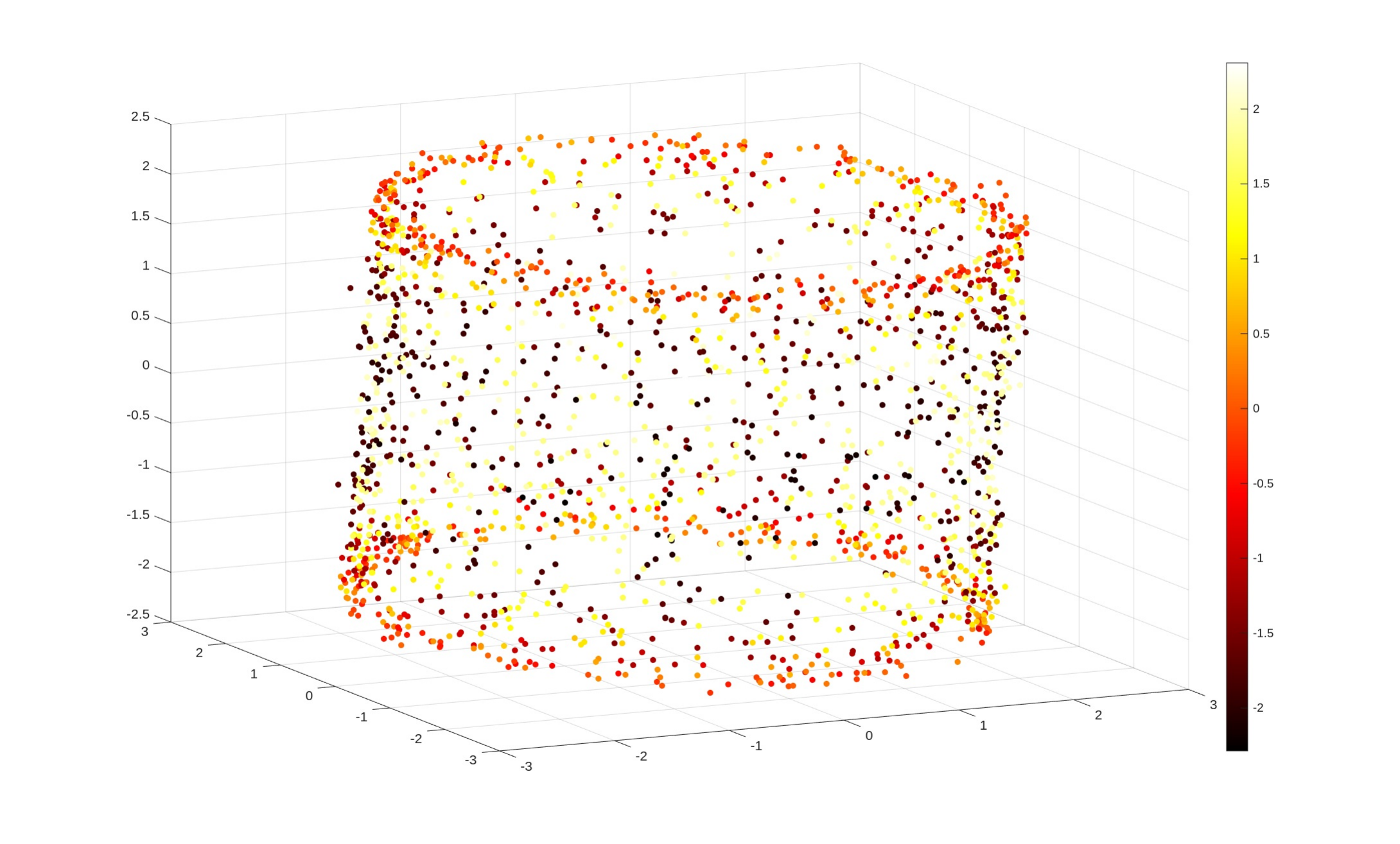}
    \label{cursed-torus-view-2}
\end{subfigure}
\caption{Two different viewing angles of a ``4D" projection of the $5$-torus. The fourth dimension is represented by colour.}
\label{cursed-torus-pictures}
\end{figure}

For these point-clouds, $\ph{1}(X)$ was computed using $\mathsf{Ripser}$ and $\mathsf{EuclideanPH1}$. The runtimes for these computations are shown in Figure \ref{cursed-torus-runtimes-and-log}. In every run of $\mathsf{EuclideanPH1}$ $k$ was set to 100. 

\begin{figure}[h!]

    \begin{subfigure}[t]{.45\textwidth}
    \includegraphics[scale = 0.45]{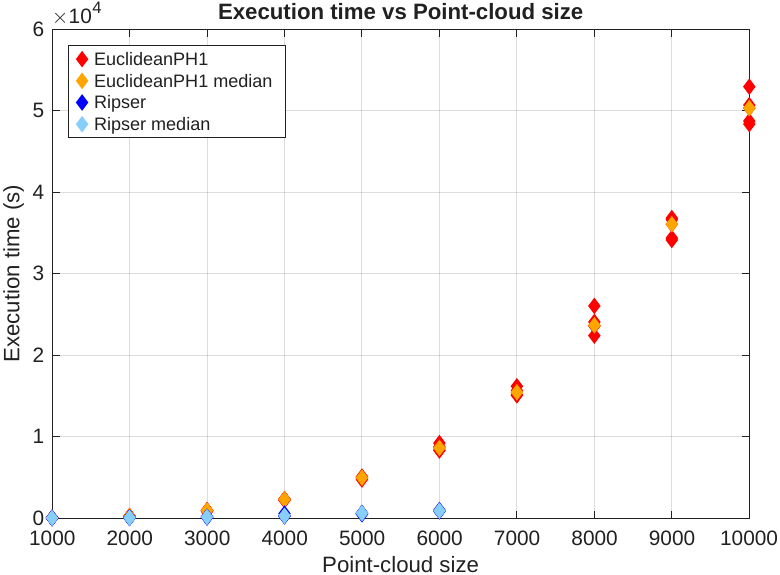}
    \caption{Execution time against point-cloud size for the noisy $5$-torus. At $n=6000$, $\mathsf{Ripser}$ completed 3 out of 5 runs.}
    \label{cursed-torus-in-r10}
    \end{subfigure}
    \hfill
    \begin{subfigure}[t]{.45\textwidth}
    \includegraphics[scale = 0.45]{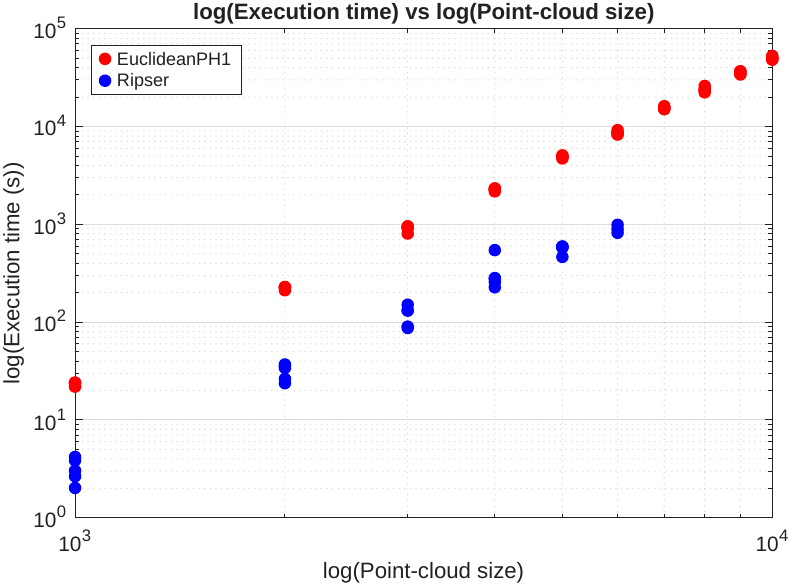}
    \caption{The log-log graph of execution time against point-cloud size for the noisy $5$-torus data set. MATLAB gives the slope of the line of best fit through the $\mathsf{EuclideanPH1}$ points as $3.34$. MATLAB gives the slope of the line of best fit through the $\mathsf{Ripser}$ points as $3.22$.}
    \label{log-log-cursed-torus}
\end{subfigure}
\caption{Runtimes and corresponding log-log plot for the $5$-torus. For $n \geq 7000$ $\mathsf{Ripser}$ could not compute $\ph{1}(X)$ as it exhausted the memory of the machine.}
\label{cursed-torus-runtimes-and-log}
\end{figure}

\section{Discussion: $\mathsf{EuclideanPH1}$ vs $\mathsf{Ripser}$.}
\label{Discussion:Ripser}
The results show that overall $\mathsf{EuclideanPH1}$  provides a practical advantage on larger
point-clouds by completing computations that $\mathsf{Ripser}$ could not complete within the available memory.
%The results show that overall $\mathsf{EuclideanPH1}$ provides a definitive ``doability" advantage when compared to $\mathsf{Ripser}$, in the sense that it can compute $\ph{1}(X)$ for point-clouds which $\mathsf{Ripser}$ cannot. 
However when it comes to smaller point-clouds, $\mathsf{Ripser}$ maintained its speed advantage over $\mathsf{EuclideanPH1}$ in two out of three of the experiments. It should be said that none of the experiments using $\mathsf{EuclideanPH1}$ exhausted the memory of the machine, and it is was certainly possible to compute $\ph{1}(X)$ for larger point clouds, but we did not due to the large run times.  
These three experiments are discussed in more detail in the following sections. 

\subsection{Solid tori embedded in $\mathbb{R}^{10}$}

It is not that surprising that $\mathsf{EuclideanPH1}$ fared best against $\mathsf{Ripser}$ for the solid tori point-clouds. Even if these point-clouds are embedded in $\mathbb{R}^{10}$, the fact that the tori are solid means that the lens test described in Lemma \ref{lens-lemma} can be used more often. This means that the time taken to compute the number of connected components of a given lune is less, leading to a dramatic reduction in runtime. This is observed in the runtimes shown in Figure \ref{solid-torus-in-r10}, where one can see that even for smaller point-clouds, $\mathsf{EuclideanPH1}$ is faster than $\mathsf{Ripser}$. 

\subsection{Hollow noisy  $2$-tori embedded in $\mathbb{R}^{10}$}
In the case of the the hollow $2$-tori we see a less favourable performance from $\mathsf{EuclideanPH1}$. Whilst $\mathsf{EuclideanPH1}$ is able to compute $\ph{1}(X)$ for larger point-clouds than $\mathsf{Ripser}$, when $\mathsf{Ripser}$ is able to compute $\ph{1}(X)$ the data in Figure \ref{hollow-torus-in-r10} shows that $\mathsf{Ripser}$ is faster. Unlike the case with solid tori, the lens test in Lemma \ref{lens-lemma} will be utilised considerably less due to the fact that a larger number of lunes will have empty lens, a consequence of the fact that the $2$-torus is hollow. Thus it is not particularly surprising that we see the time performance of $\mathsf{EuclideanPH1}$ suffer in comparison to that of the solid tori. Despite this disadvantage we can see from the log-log graph in Figure \ref{log-log-hollow-torus} that $\mathsf{EuclideanPH1}$  has a better log-log slope than $\mathsf{Ripser}$ over the tested range, suggesting that even with unlimited memory one would potentially see more favourable runtimes from $\mathsf{EuclideanPH1}$ in the long run. 

\subsection{$5$-tori}
In this section we see the time performance of $\mathsf{EuclideanPH1}$ suffers particularly badly in comparison to the previous two experiments. As in the case of the hollow tori, the lens test will not be utilised as frequently meaning that there will not be as many saving from that. Furthermore, since the point-cloud is \emph{genuinely} a higher dimensional point-cloud the lunes are ``bigger" in the sense that the maximum number of connected components in a given lune is higher, meaning that it will take longer to determine the number of connected components in each lune, leading to the large increase in runtime we observe in Figure \ref{cursed-torus-in-r10}. For the point-clouds where $\mathsf{Ripser}$ can compute $\ph{1}(X)$, $\mathsf{Ripser}$  defeats $\mathsf{EuclideanPH1}$ in terms of runtime. However, like in the previous experiments, $\mathsf{EuclideanPH1}$ provides a practical advantage on longer instances by completing computations that $\mathsf{Ripser}$ could not with the available memory, albeit at the cost of a longer runtime. It is interesting to note in Figure \ref{log-log-cursed-torus} that $\mathsf{EuclideanPH1}$ and $\mathsf{Ripser}$ have similar log-log slopes over the tested range, though the slope for $\mathsf{EuclideanPH1}$ is slightly larger. 

\section{Experiments: Comparing $\mathsf{EuclideanPH1}$ with \\ GUDHI's $\mathsf{Edge\_collapse}$}
\label{sec:comparing-euclideanph1-with-gudhis-edge-collapse}

%\textcolor{red}{How can I fix the formatting of this heading}. 
GUDHI \cite{10.1007/978-3-662-44199-2_28} is a software package developed by INRIA and is an extensive library containing many tools useful in the field of persistent homology. Edge collapses, which we will refer to as $\mathsf{Edge\_collapse}$ described in \cite{boissonnat2020edge} and \cite{glisse2022swap} is a method which has a similar philosophy to $\mathsf{EuclideanPH1}$: reduce the overall number of simplices in the filtration and then use that filtration to obtain the persistence pairs. In GUDHI, persistent homology can be computed using the function $\mathsf{Persistent\_Cohomology}$ which can be found in \cite{persistent-cohomology}. This function uses the algorithm of \cite{de2009persistent} and \cite{tamal_paper} and the compressed annotation matrix implementation of \cite{compressed_paper}. From here on in, for the sake of brevity, we will refer to the combined use of first using $\mathsf{Edge\_collapse}$ and then using $\mathsf{Persistent\_Cohomology}$ simply by $\mathsf{Edge\_collapse}$ as it is the method that fits with the theme of this paper: reducing the number of simplices used in computation. One of the key ways this is done is by the use of dominated edges. Dominated edges can be seen, in some sense, as a middle ground between lunes which have non-empty lens and lunes which simply have one connected component. We recast the definition of a dominated edge in \cite{glisse2022swap} in our notation. 

\begin{definition}
    Consider a $1$-simplex $\langle yz \rangle$. The $1$-simplex $\langle yz \rangle$ is said to be dominated by a point $x\in \lune (\langle yz \rangle)$ if all points in $x' \neq x,\; x' \in \lune (\langle yz \rangle)$ satisfy $\langle x x' \rangle < \langle yz \rangle$. A $1$-simplex $\langle yz \rangle$ is said to be dominated if there exists such an $x\in \lune(\langle yz \rangle)$. 
\end{definition}

\begin{remark}
\label{remark:lens-inside-dominated-inside-onecc}
    A $1$-simplex $\langle yz \rangle$ such that $\lens (\langle yz \rangle) \neq \emptyset$ is dominated by all points that lie in its lens. The converse is not true, one can have $\langle yz \rangle$ be dominated by a point $x$ with $\lens (\langle yz \rangle)$. By definition, all dominated $1$-simplices must be such that $\lune(\langle yz \rangle)$ has one connected component, but the converse is not true. A $1$-simplex $\langle yz \rangle$ can have one connected component without being dominated by any of the points in $\lune(\langle yz \rangle)$. An example is shown in Figure \ref{fig:dominated-non-dominated}.
\end{remark}

\begin{figure}[h!]

    \begin{subfigure}[t]{.45\textwidth}
    \centering
    \includegraphics[scale = 0.5]{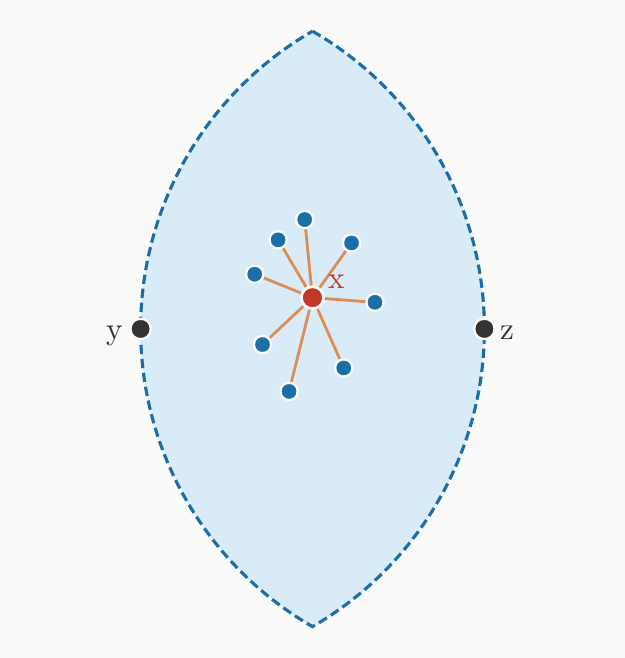}
    \label{dominated-edge-example}
    \end{subfigure}
    \hfill
    \begin{subfigure}[t]{.45\textwidth}
    \centering
    \includegraphics[scale = 0.5]{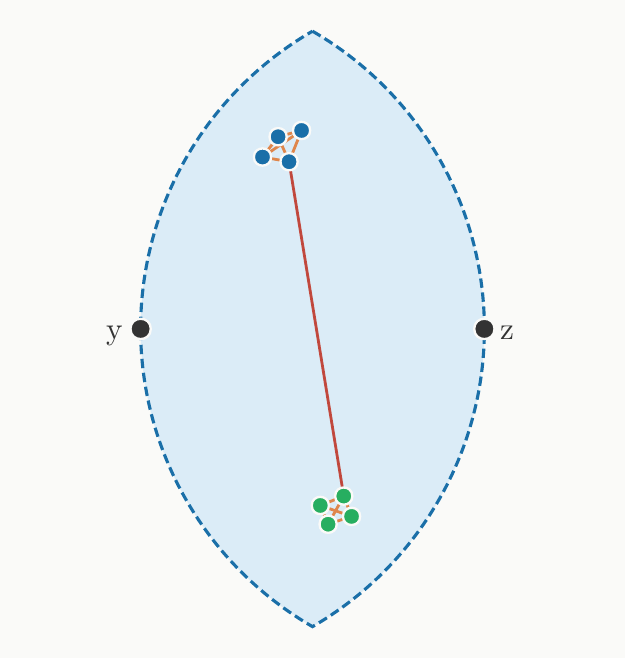}
    \label{one-cc-non-dominated}
\end{subfigure}
\caption{On the left: The $1$-simplex $\langle yz \rangle$ is dominated by $x$. Note that not all edges of the proximity graph are shown. On the right: $\lune(\langle yz \rangle)$ has one connected component but is not dominated by any of the points in $\lune(\langle yz \rangle)$.}
\label{fig:dominated-non-dominated}
\end{figure}

Dominated $1$-simplices are important because they can allow one to remove several simplices from the filtration. The reader interested in further details is directed towards \cite{glisse2022swap}, \cite{boissonnat2020edge}. 

The experiments in this section were performed using the same machine as in Section \ref{sec:experiments}. The same point-clouds used in the experiments in Sections \ref{sec:solid-torus-in-r3-embedded-in-r10-ripser-experiment}, \ref{sec:noisy-hollow-torus-in-r3-embedded-in-r10-ripser-experiment} and \ref{sec:noisy-cursed-torus-experiments-ripser} were used in the experimental results depicted in Figures \ref{fig:gudhi-experiments solid}, \ref{fig:gudhi-experiments-hollow} and \ref{fig:gudhi-experiments-cursed}. All results for $\mathsf{EuclideanPH1}$ were not continued in the interest of time, none of the $\mathsf{EuclideanPH1}$ experiments exhausted the memory of the machine. For the $\mathsf{Edge\_collapse}$ results featured in Figures \ref{fig:gudhi-experiments solid} and \ref{fig:gudhi-experiments-hollow}, $\mathsf{Edge\_collapse}$ also did not exhaust the memory of the machine and also were not continued in the interest of time. However, for the $\mathsf{Edge\_collapse}$ results in Figure \ref{fig:gudhi-experiments-cursed}, these were not continued because $\mathsf{Edge\_collapse}$ \emph{did} exhaust the memory of the machine. For $\mathsf{EuclideanPH1}$, the value of $k$ was taken to be 100 for all runs of the code.

In these experiments as well, the O3 optimisation was used and all other processes, except those necessary to run the operating system, were shut down. The machine was also disconnected from the internet. All measured times are ``wall-clock" times. The specific version of GUDHI used was 3.5.0. The version of $\mathsf{EuclideanPH1}$ used was the same version as the one used in Section \ref{sec:experiments}.

As stated above, the process $\mathsf{Edge\_ collapse}$ stands for the process of performing the edge collapse (obtaining the subfiltration) and then computing  the persistent homology of that filtration using $\mathsf{Persistent\_Cohomology}$. All runtimes listed in the following experiments refer to the time to construct the subfiltration and then compute persistent homology from the subfiltration. $\mathsf{Edge\_collapse}$ had some parameters which could be chosen. We state here our choices and why we believe these to give a fair comparison to $\mathsf{EuclideanPH1}$:

\begin{itemize}
    \item Threshold: This sets the diameter of the largest simplex that $\mathsf{Edge\_collapse}$ will analyse. This was not set since $\mathsf{EuclideanPH1}$ is also not given a maximal diameter of simplex to analyse. 

    \item Maximum dimension: This refers to the maximum degree of persistent homology to compute. This was naturally set to $1$ as $\mathsf{EuclideanPH1}$ only computes $\ph{1}(X)$. 

    \item Edge collapse iteration number: This is the number of times $\mathsf{Edge\_collapse}$ performs the process of finding the subfiltration. This was set to $1$ as $\mathsf{EuclideanPH1}$ only makes a single pass of the $1$-simplices when constructing the Reduced Vietoris-Rips filtration. 

    \item Minimum persistence: This tells $\mathsf{Edge\_collapse}$ to ignore persistence pairs with persistence smaller than a certain size. This was set to $0$, meaning $\mathsf{Edge\_collapse}$ reports all persistence pairs. This is because $\mathsf{EuclideanPH1}$ also reports all persistence pairs. 
\end{itemize}

\begin{figure}[htbp]
    \centering
 
    % ── Row 1 ───────────────────────────────────────────────
 
    \begin{subfigure}[b]{0.45\textwidth}
        \centering
        \includegraphics[width=\textwidth]{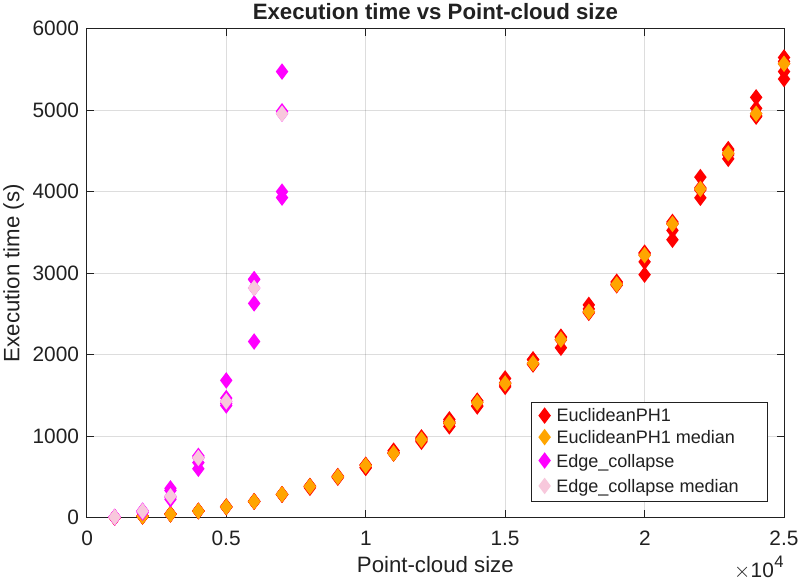}
        \caption{Execution times vs point-cloud size for point-clouds taken from the solid torus in $\mathbb{R}^{10}$. Both the results for $\mathsf{EuclideanPH1}$ and $\mathsf{Edge\_collapse}$ were not taken further in the interest of time. Neither exhausted the memory of the machine.}
        \label{fig:gudhi-experiments-solid-torus-runtimes}
    \end{subfigure}
    \hfill
    \begin{subfigure}[b]{0.45\textwidth}
        \centering
        \includegraphics[width=\textwidth]{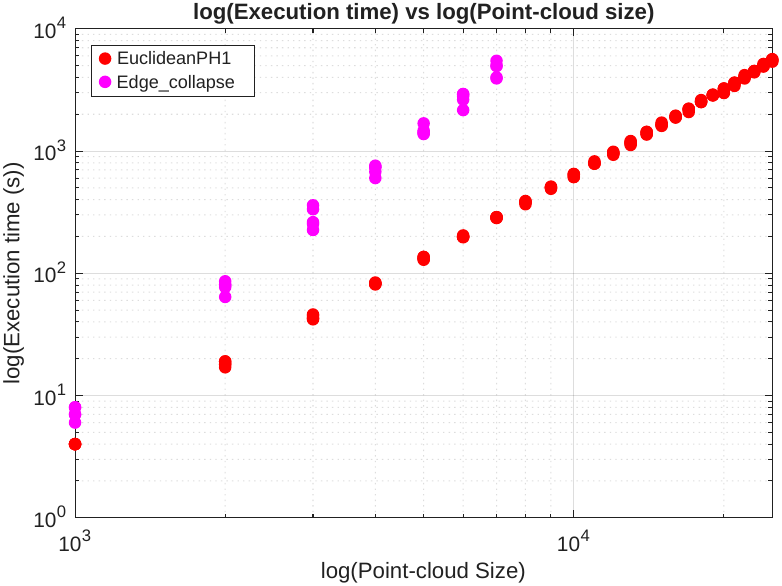}
        \caption{The log-log plot of Figure \ref{fig:gudhi-experiments-solid-torus-runtimes}. The $\mathsf{EuclideanPH1}$ slope is 2.26 and the $\mathsf{Edge\_collapse}$ slope is 3.30. }
        \label{fig:gudhi-experiments-solid-torus-log-log-runtimes}
    \end{subfigure}
 
    \vspace{1em}   % vertical gap between rows
 
    % ── Row 2 ───────────────────────────────────────────────
 
    \begin{subfigure}[b]{0.45\textwidth}
        \centering
        \includegraphics[width=\textwidth]{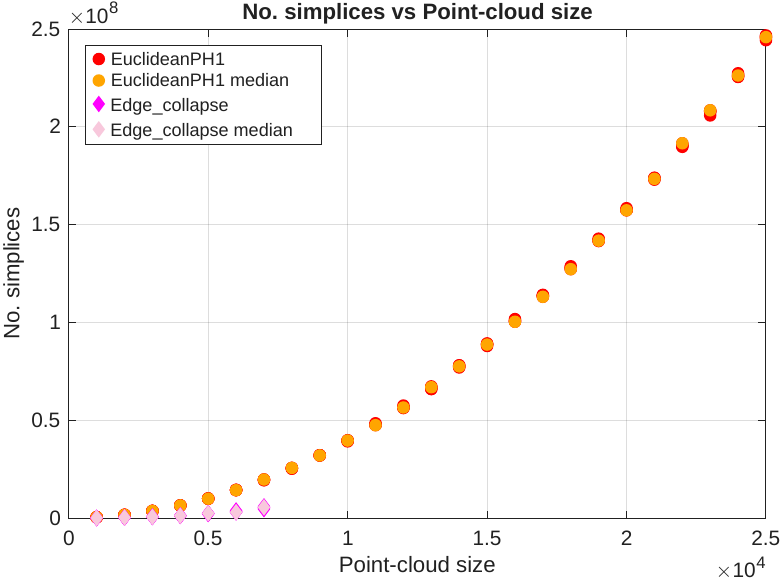}
        \caption{Number of simplices used in the computation of $\ph{1}(X)$ vs point-cloud size for point-clouds taken from the solid torus in $\mathbb{R}^{10}$. }
        \label{fig:gudhi-experiments-solid-torus-simplices}
    \end{subfigure}
    \hfill
    \begin{subfigure}[b]{0.45\textwidth}
        \centering
        \includegraphics[width=\textwidth]{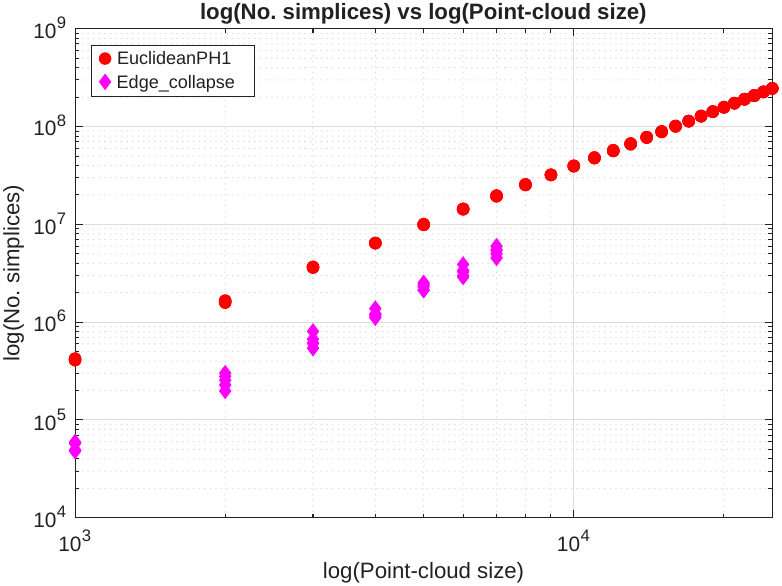}
        \caption{The log-log plot of Figure \ref{fig:gudhi-experiments-solid-torus-simplices}. The $\mathsf{EuclideanPH1}$ slope is 1.99 and the $\mathsf{Edge\_collapse}$ slope is 2.35.}
        \label{fig:gudhi-experiments-solid-torus-log-log-simplices}
    \end{subfigure}
 
    % ── Overall caption & label ─────────────────────────────
    \caption{%
        Experimental results comparing $\mathsf{EuclideanPH1}$ against $\mathsf{Edge\_collapse}$ for point-clouds taken from a solid torus embedded in $\mathbb{R}^{10}$. 
    }
    \label{fig:gudhi-experiments solid}
 
\end{figure}

\begin{figure}[htbp]
    \centering
 
    % ── Row 1 ───────────────────────────────────────────────
 
    \begin{subfigure}[b]{0.45\textwidth}
        \centering
        \includegraphics[width=\textwidth]{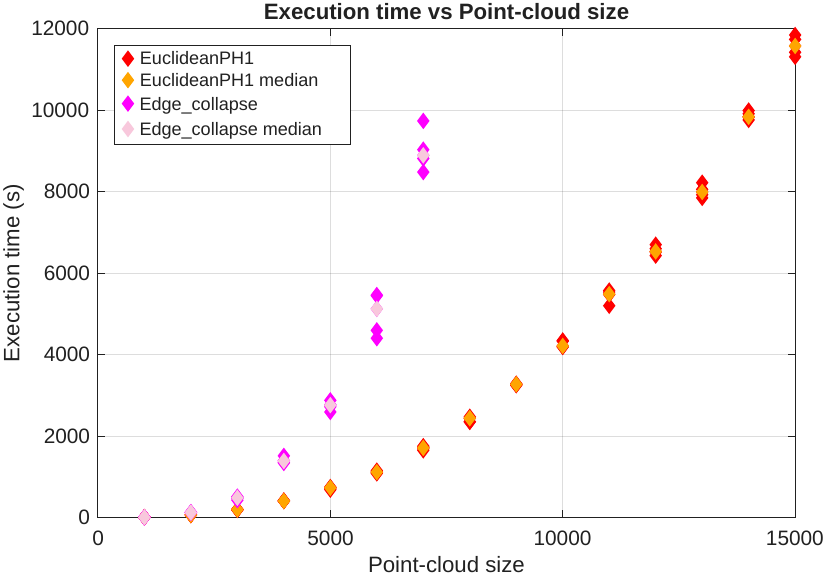}
        \caption{Execution times vs point-cloud size for point-clouds taken from a hollow noisy $2$-torus in $\mathbb{R}^{10}$. Both the results for $\mathsf{EuclideanPH1}$ and $\mathsf{Edge\_collapse}$ were not taken further in the interest of time. Neither exhausted the memory of the machine.}
        \label{fig:gudhi-experiments-hollow-torus-runtimes}
    \end{subfigure}
    \hfill
    \begin{subfigure}[b]{0.45\textwidth}
        \centering
        \includegraphics[width=\textwidth]{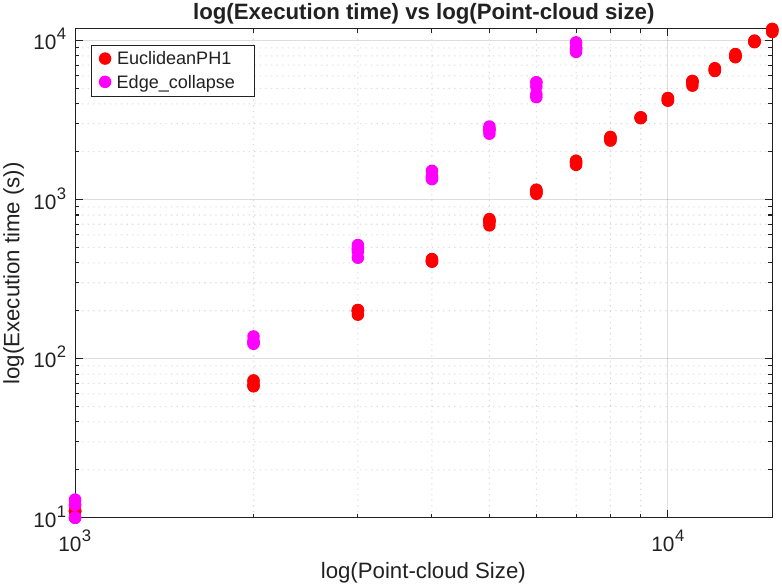}
        \caption{The log-log plot of Figure \ref{fig:gudhi-experiments-hollow-torus-runtimes}. The $\mathsf{EuclideanPH1}$ slope is 2.56 and the $\mathsf{Edge\_collapse}$ slope is 3.42. }
        \label{fig:gudhi-experiments-hollow-torus-log-log-runtimes}
    \end{subfigure}
 
    \vspace{1em}   % vertical gap between rows
 
    % ── Row 2 ───────────────────────────────────────────────
 
    \begin{subfigure}[b]{0.45\textwidth}
        \centering
        \includegraphics[width=\textwidth]{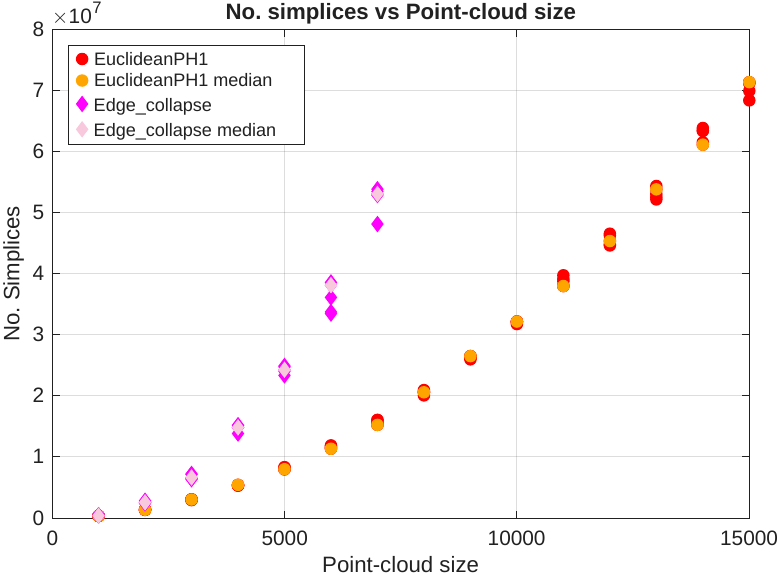}
        \caption{Number of simplices used in the computation of $\ph{1}(X)$ vs point-cloud size for point-clouds taken from a hollow noisy $2$-torus in $\mathbb{R}^{10}$.}
        \label{fig:gudhi-experiments-hollow-torus-simplices}
    \end{subfigure}
    \hfill
    \begin{subfigure}[b]{0.45\textwidth}
        \centering
        \includegraphics[width=\textwidth]{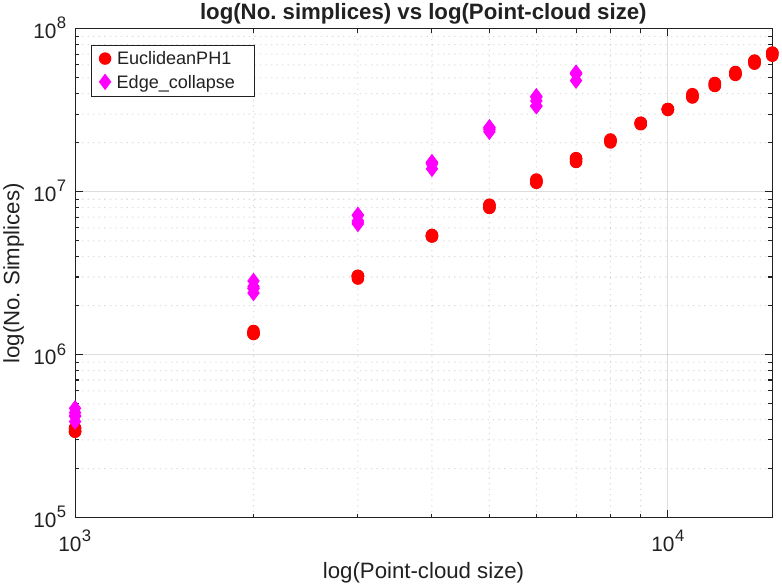}
        \caption{The log-log plot of Figure \ref{fig:gudhi-experiments-hollow-torus-simplices}. The $\mathsf{EuclideanPH1}$ slope is 1.96 and the $\mathsf{Edge\_collapse}$ slope is 2.46. }
        \label{fig:gudhi-experiments-hollow-torus-log-log-simplices}
    \end{subfigure}
 
    % ── Overall caption & label ─────────────────────────────
    \caption{%
        Experimental results comparing $\mathsf{EuclideanPH1}$ against $\mathsf{Edge\_collapse}$ for point-clouds taken from the noisy hollow $2$-torus in $\mathbb{R}^{10}$. 
    }
    \label{fig:gudhi-experiments-hollow}
 
\end{figure}

\begin{figure}[htbp]
    \centering
 
    % ── Row 1 ───────────────────────────────────────────────
 
    \begin{subfigure}[b]{0.45\textwidth}
        \centering
        \includegraphics[width=\textwidth]{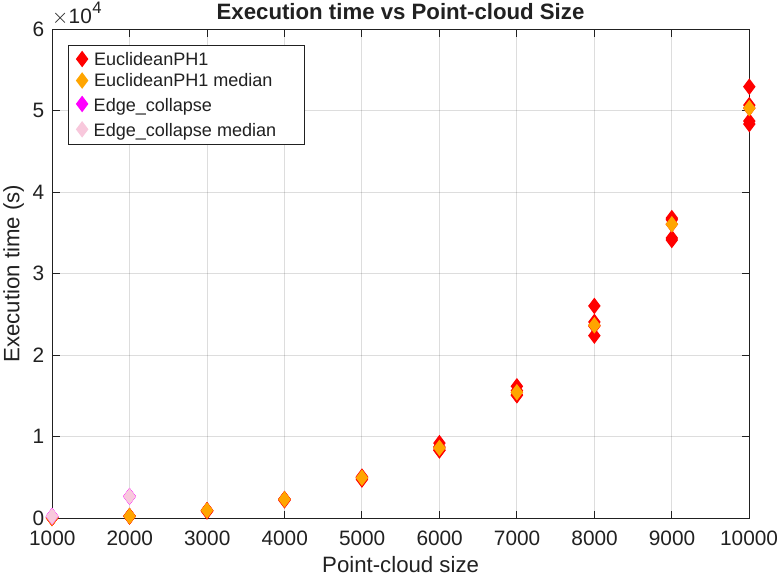}
        \caption{Execution times vs point-cloud size for point-clouds taken from the noisy $5$-torus. The results for $\mathsf{EuclideanPH1}$ were not taken further in the interest of time. The results of $\mathsf{Edge\_collapse}$ were not continued as $\mathsf{Edge\_collapse}$ exhausted the memory of the machine.}
        \label{fig:gudhi-experiments-cursed-torus-runtimes}
    \end{subfigure}
    \hfill
    \begin{subfigure}[b]{0.45\textwidth}
        \centering
        \includegraphics[width=\textwidth]{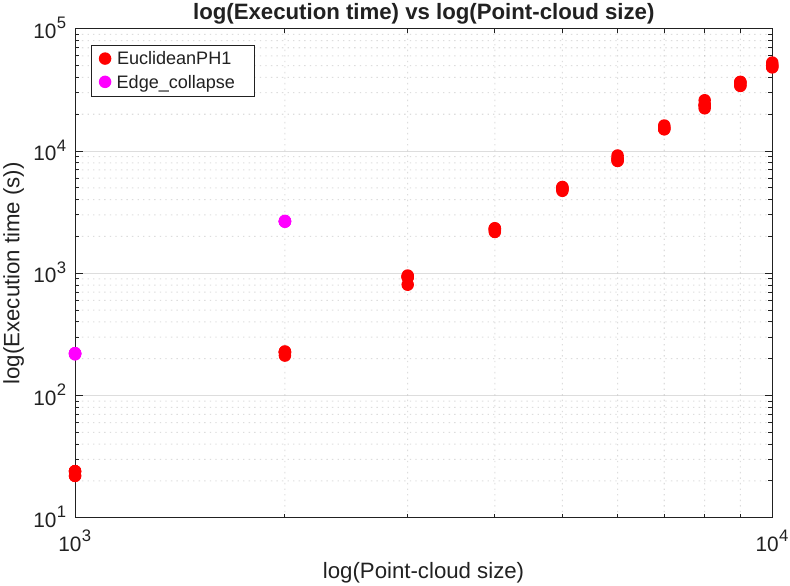}
        \caption{The log-log plot of Figure \ref{fig:gudhi-experiments-cursed-torus-runtimes}. The $\mathsf{EuclideanPH1}$ slope is 3.34 and the $\mathsf{Edge\_collapse}$ slope is 3.59. }
        \label{fig:gudhi-experiments-cursed-torus-log-log-runtimes}
    \end{subfigure}
 
    \vspace{1em}   % vertical gap between rows
 
    % ── Row 2 ───────────────────────────────────────────────
 
    \begin{subfigure}[b]{0.45\textwidth}
        \centering
        \includegraphics[width=\textwidth]{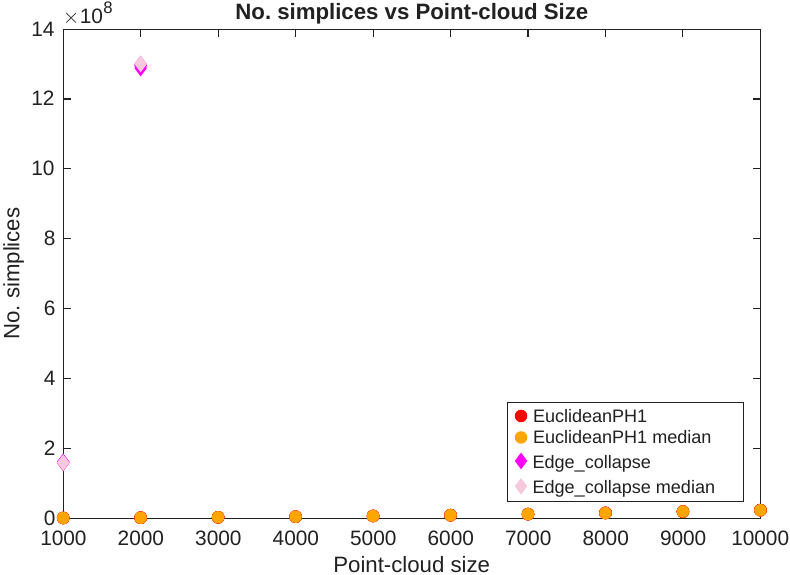}
        \caption{Number of simplices used in the computation of $\ph{1}(X)$ vs point-cloud size for point-clouds taken from the noisy $5$-torus.}
        \label{fig:gudhi-experiments-cursed-torus-simplices}
    \end{subfigure}
    \hfill
    \begin{subfigure}[b]{0.45\textwidth}
        \centering
        \includegraphics[width=\textwidth]{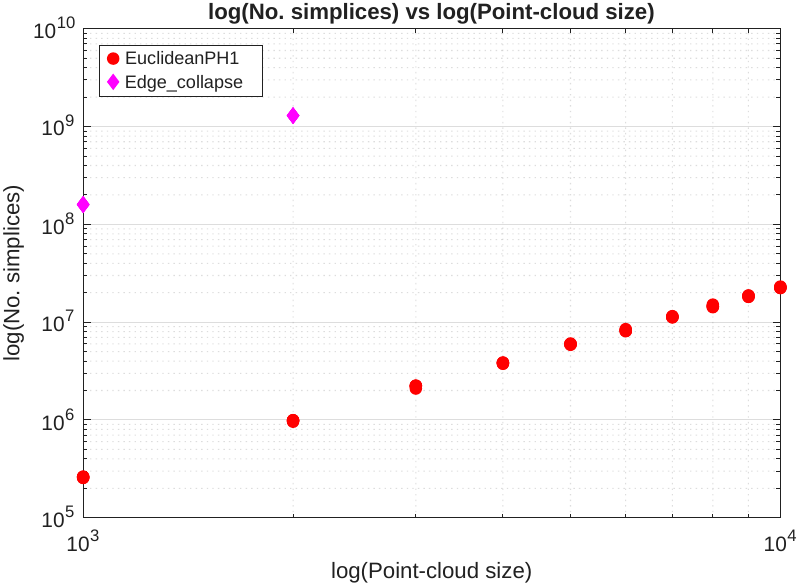}
        \caption{The log-log plot of Figure \ref{fig:gudhi-experiments-cursed-torus-simplices}. The $\mathsf{EuclideanPH1}$ slope is 1.94 and the $\mathsf{Edge\_collapse}$ slope is 3.02.}
        \label{fig:gudhi-experiments-cursed-torus-log-log-simplices}
    \end{subfigure}
 
    % ── Overall caption & label ─────────────────────────────
    \caption{%
        Experimental results comparing $\mathsf{EuclideanPH1}$ against $\mathsf{Edge\_collapse}$ for point-clouds taken from the noisy $5$-torus. 
    }
    \label{fig:gudhi-experiments-cursed}
 
\end{figure}

\section{Discussion: $\mathsf{EuclideanPH1}$ vs $\mathsf{Edge\_collapse}$}
\label{Discussion:EdgeCollapse}
The experiments indicate that $\mathsf{EuclideanPH1}$ generally performs more favorably than $\mathsf{Edge\_collapse}$ in terms of runtime and memory usage when computing $\ph{1}(X)$ for the point-clouds tested. It should be stressed however, that the smaller filtration provided by $\mathsf{Edge\_collapse}$ can also be used to compute higher-degree persistent homology, while the smaller filtration constructed for $\mathsf{EuclideanPH1}$ can only be used to compute $\ph{1}(X)$. For the point-clouds studied, $\mathsf{EuclideanPH1}$ was better at reducing the number of simplices needed to compute $\ph{1}(X)$. For all point-clouds in these experiments it consistently reduced the number of simplices used from $O(n^3)$ to $O(n^2)$ as can be seen from the slopes in Figures
\ref{fig:gudhi-experiments-solid-torus-log-log-simplices}, \ref{fig:gudhi-experiments-hollow-torus-log-log-simplices} and
\ref{fig:gudhi-experiments-cursed-torus-log-log-simplices}, which are all very close to 2.

Whilst $\mathsf{EuclideanPH1}$ generally provides a lower runtime for computing $\ph{1}(X)$ than $\mathsf{Edge\_collapse}$, it should be noted that $\mathsf{EuclideanPH1}$ has been specifically optimized for Euclidean point-clouds, taking advantage of the geometry of Euclidean space by using $kd$-trees and specialized algorithms for computing $\rng(X)$. On the other hand, the $\mathsf{Edge\_collapse}$ implementation in $\mathrm{GUDHI}$ can be used for any filtration of flag complexes.

In what follows, we will more closely analyze the number of simplices used by $\mathsf{EuclideanPH1}$ and $\mathsf{Edge\_collapse}$. 

\subsection{Solid tori embedded in $\mathbb{R}^{10}$}
\label{subsection: gudhi-discussion-solid-3d-tori-embedded-in-R10}
In this set of experiments $\mathsf{Edge\_collapse}$ had the most favourable performance in terms of the scaling in the number of simplices, over the tested range, used for computation of $\ph{1}(X)$. This is not surprising, a $1$-simplex $\langle yz \rangle$ will be dominated provided there is a point $v\in \lune(\langle yz \rangle)$ such that for all $x\in \lune(\langle yz \rangle)$, $\langle vx \rangle < \langle yz \rangle$. This will be the case for many $1$-simplices that lie ``within" the overall torus structure. In fact, many of these $1$-simplices will have non empty lens, which can be seen by the comparably favourable $\mathsf{EuclideanPH1}$ run times.   

\subsection{Hollow noisy $2$-tori embedded in $\mathbb{R}^{10}$}
In this set of experiments, in terms of the scaling of the numbers of simplices used, $\mathsf{Edge\_collapse}$'s performance did not experience a significant change when compared to the results of Section \ref{subsection: gudhi-discussion-solid-3d-tori-embedded-in-R10}. As can be inferred from the longer runtimes of $\mathsf{EuclideanPH1}$ there are less simplices with non-empty lens, however recall from Remark \ref{remark:lens-inside-dominated-inside-onecc} that this does not mean these edges are not dominated. Many $1$-simplices with empty lens will still be dominated, a simple example being $1$-simplices which have one point in their lune which does not lie in the lens. 

\subsection{Noisy $5$-torus}
In this set of experiments we witnessed the poorest performance of $\mathsf{Edge\_collapse}$. Whilst the run times for $\mathsf{EuclideanPH1}$ were very large, $\mathsf{Edge\_collapse}$ could not even compute $\ph{1}(X)$ for point-clouds for point-clouds containing $3000$ or more points. The number of simplices $\mathsf{Edge\_collapse}$ used was basically equal to the complete Vietoris-Rips filtration. For $n=1000$ an average of $95.74\%$ of all simplices in the complete filtration were used in the subfiltration. For $n=2000$ an average $97.16\%$ of all simplices in the complete filtration were used in the subfiltration. This can be confirmed from the results of Figure \ref{fig:gudhi-experiments-cursed-torus-simplices} and the following numerical facts. 

\begin{equation}
\binom{1000}{3} + \binom{1000}{2} + \binom{1000}{1} = 166667500 \approx 1.67 \times 10^8
\end{equation}

and 

\begin{equation}
\binom{2000}{3} + \binom{2000}{2} + \binom{2000}{1} = 1333350000 \approx 1.33 \times 10^9
\end{equation}

This suggests that $\mathsf{Edge\_collapse}$ is only removing a small fraction of simplices and that very few $1$-simplices were dominated. This is not surprising considering the nature of the $5$-torus. Unlike the other two spaces this space is not intrinsically three dimensional. Principal component analysis shows the percentages for the components of one of the $n=1000$ point-clouds to be \\$23.23\%, 22.74\%, 14.86\%, 13.81\%, 8.51\%, 7.78\%, 3.70\%, 3.54\%, 0.97\%, 0.86\%$. Due to the inherent higher dimensionality of the data, lunes are in some sense ``bigger" and are able to have more connected components, which in turn leads to more spatial configurations of points that cause a lune of a $1$-simplex to have one connected component but that $1$-simplex to not be dominated by any point. This means that in this setup, the condition for domination of an edge is more demanding. 

\section{Conclusion and Future Work}

In this paper we have defined the notion of Reduced Vietoris-Rips complexes and filtrations on a point cloud $X$ and have shown in Theorem \ref{theorem-VR-RVR-isomorphism} that they give the same degree-1 persistent homology as the standard Vietoris-Rips filtration. We also provide a higher degree analogue of Theorem \ref{theorem-VR-RVR-isomorphism} in the appendix in the form of Theorem \ref{deg-q-theorem-VR-RVR-isomorphism}. We have also designed and implemented code as the $\mathsf{EuclideanPH1}$ package \cite{EuclideanPH1} which can compute $\ph{1}(X)$ for a point cloud in Euclidean space. An output analysis of this code has been presented in Sections \ref{sec:experiments} and \ref{sec:comparing-euclideanph1-with-gudhis-edge-collapse}. We have shown that for smaller point-clouds $\mathsf{Ripser}$ generally has better runtimes. However for larger point clouds the lower memory requirements of $\mathsf{EuclideanPH1}$ mean that $\mathsf{EuclideanPH1}$ is able to compute $\ph{1}(X)$ while $\mathsf{Ripser}$ is unable to do so due to running out of memory. We have also shown $\mathsf{EuclideanPH1}$ fares favourably against $\mathsf{Edge\_collapse}$ in terms of runtime and memory requirements, albeit at the sacrifice of producing a subfiltration which only computes degree-$1$ persistent homology. 

A potential avenue of future work is to develop an algorithm for $\ph{q}(X)$ where $q \geq 2$. It is possible to extend Theorem \ref{theorem-VR-RVR-isomorphism} to higher dimensions, but the benefits diminish as the degree of homology increases. The extension to Theorem \ref{theorem-VR-RVR-isomorphism} is given in Appendix~\ref{higher-degree-homologies-section}.

This work focuses on point clouds in Euclidean space, so another avenue of future work is to find alternative algorithms for point clouds in non-Euclidean metric spaces.  
Theorem $\ref{theorem-VR-RVR-isomorphism}$ applies to point clouds in general metric spaces but it may not lead to such significant computational efficiencies without the use of a $kd$-tree. 
One would need to an algorithm to find the lune of each 1-simplex without resorting to testing all points to see if they are in the lune. 
It might be more fruitful to restrict oneself to a certain class of metric spaces rather than find an algorithm which can be applied to all metric spaces. One such class might be the set of doubling metric spaces. These metric spaces are useful since the number of connected components in a lune is  bounded above in a way that is not dependent on $n$. 
This means that we can still assert that the use of Theorem \ref{theorem-VR-RVR-isomorphism} will reduce the overall number of 2-simplices from $O(n^3)$ to $O(n^2)$. 

\begin{appendices}

\section{Extension into higher degree homology}
\label{higher-degree-homologies-section}

Here we present the higher degree homology equivalent of Theorem \ref{theorem-VR-RVR-isomorphism}. Before we do so, we need to define the higher degree notions of a lune and Reduced Vietoris-Rips complex. 

\begin{definition}[Lune]
Consider a point-cloud $X$ with its corresponding Vietoris-Rips filtration $\Vr _{\bullet}(X)$. Then for a $q$-simplex $\sigma = \langle y_0....y_{q}\rangle$ we define $\lune (\sigma)$ in the following fashion. 
\begin{equation}
    \lune (\sigma) = \{x \in X \;| \; \langle x y_0...\hat{y_i}...y_{q} \rangle < \langle y_0...y_{q} \rangle \; \forall i\in \{0,...,q\} \}
\end{equation}
\end{definition}

Here $\hat{y_i}$ denotes the omission of $y_i$ from $\langle xy_{0}...\hat{y_i}...y_q\rangle$. Note that if $q = 1$ the definition reduces to the definition of a lune for an edge. Having defined the lune for higher dimension simplices we define the number of connected components of a lune for a higher dimension simplex. 

\begin{definition}[Connected components of a lune]
\label{def-connected-components-of-lune-appendix}
Consider a $q$-simplex $\sigma = \langle y_{0}...y_{q} \rangle$. Consider a graph with vertices consisting of the points in $\lune (\sigma)$. Join two points $p_{1},p_{2} \in \lune ( \sigma )$ by an edge if $\langle p_{1}p_{2} y_{0}...\hat{y}_{i}...\hat{y}_{j}...y_{q} \rangle  < \sigma $ for all $0 \leq i < j \leq q$. Suppose this graph has $c$ connected components, then we say that $\lune (\sigma)$ has $c$ connected components. 
\end{definition}

We will also need a degree-$q$ lune function. 

\begin{definition}[Degree-$q$ lune function]
\label{lune-function-appendix}
    Recall $\spx{q}(\Vr_{\infty}(X))$ denotes the set of $q$-simplices in $\Vr_{\infty}(X)$. We define a Lune function $L^{q}:\spx{q}(\Vr_{\infty}(X))\rightarrow 2^{X}$ as a function that takes a  $q$-simplex $\sigma$, with $c_{\sigma}$ connected components in its lune, to a set $\{x_{1},...,x_{c_{\sigma}}\}$. The points $x_{1},...,x_{c_{\sigma}}$ are chosen from the connected components of $\sigma$, one point from each connect component. 
\end{definition}

We are now ready to define the higher degree analogue to Definition \ref{def-reduced-vietoris-rips-complex}. 

\begin{definition}[Degree-$q$ reduced Vietoris-Rips complex]
\label{deg-q-def-reduced-vietoris-rips-complex}
Consider a point-cloud $X$. The degree-$q$ Reduced Vietoris-Rips Complex of $X$ with scale $r$, 
denoted $\mathcal{R}^{q}_r(X)$ 
is the simplicial complex which consists of the following:

\begin{itemize}
    \item All $i$-simplices for $i = 0,...,q$. 

    \item ($q+1$)-simplices are as follows. 
    For each $q$-simplex $\sigma = \langle y_{0}...y_{q} \rangle \in \mathcal{R}^{q}_{r}(X)$ the $(q+1)$-simplices $\langle x y_0...y_{q} \rangle, x\in L^{q}(\sigma)$ are in $\mathcal{R}^{q}_{r}(X)$. 
\end{itemize}
\end{definition}

Now we present the higher degree analogue of Theorem \ref{theorem-VR-RVR-isomorphism}. 

\begin{theorem}
\label{deg-q-theorem-VR-RVR-isomorphism}
Let $q \geq 1$ and consider a point-cloud $X$.   
Then there exists a family of isomorphisms $\theta_{\bullet}$ such that the following diagram commutes for all $r_1$ and $r_2$ such that $0 \leq  r_1 < r_2$

\begin{equation}
\begin{tikzcd}
H_{q}(\rvr^{q}_{r_1}(X)) \arrow[r, "f_{r_1}^{r_2}"] \arrow[d, "\theta_{r_1}"'] & H_{q}(\rvr^{q}_{r_2}(X)) \arrow[d, "\theta_{r_2}"] \\
H_{q}(\Vr_{r_1}(X)) \arrow[r, "g_{r_1}^{r_2}"] & H_{q}(\Vr_{r_2}(X))
\end{tikzcd}
\end{equation}

 Above, $f_{r_1}^{r_2}$ and $g_{r_1}^{r_2}$ are the maps at homology level obtained by applying $H_{q}(-)$ to the inclusions $\rvr_{r_1}^{q}(X) \subset \rvr_{r_2}^{q}(X)$ and $\Vr_{r_1}(X) \subset \Vr_{r_2}(X)$. 

\end{theorem}

\paragraph{The proof of Theorem \ref{deg-q-theorem-VR-RVR-isomorphism}} 

Let $r$ be arbitrary. We first define $\theta_{r}$ and then show it is an isomorphism. Let $\gamma + B_{q}(\rvr^{q}_{r}(X))$ be an element in $H_{q}(\rvr^{q}_{r}(X))$. Then we define $\theta_{r}$ as follows:
\begin{equation}
\theta_{r}(\gamma + B_{q}(\rvr_{r}^{q}(X))) := \gamma + B_{q}(\Vr_{r}(X))
\end{equation}
This mapping is well defined because $B_{q}(\rvr_{r}^{q}(X)) \subset B_{q}(\Vr_{r}(X))$.

\begin{lemma}
\label{q-theta-is-surjective}
    $\theta_{r}$ is surjective for all $r \geq 0$. 
\end{lemma}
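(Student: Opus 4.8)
The plan is to mirror the degree-$1$ argument of Lemma~\ref{theta-is-surjective}. The crucial structural observation is that, by Definition~\ref{deg-q-def-reduced-vietoris-rips-complex}, the complex $\rvr^{q}_{r}(X)$ contains \emph{every} $i$-simplex of $\vr_{r}(X)$ for $i = 0,1,\ldots,q$; the complex only discards simplices starting in dimension $q+1$. In particular the chain groups $C_{q}(\rvr^{q}_{r}(X)) = C_{q}(\vr_{r}(X))$ and $C_{q-1}(\rvr^{q}_{r}(X)) = C_{q-1}(\vr_{r}(X))$ agree, and the boundary operator $\partial_{q}$ is literally the same map on both. It follows that the groups of $q$-cycles coincide: $Z_{q}(\rvr^{q}_{r}(X)) = Z_{q}(\vr_{r}(X))$.

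Granting this, I would fix an arbitrary class $\gamma + B_{q}(\vr_{r}(X)) \in H_{q}(\vr_{r}(X))$ with representative cycle $\gamma \in Z_{q}(\vr_{r}(X))$. By the observation above, $\gamma$ is also a cycle in $\rvr^{q}_{r}(X)$, so $\gamma + B_{q}(\rvr^{q}_{r}(X))$ is a genuine element of $H_{q}(\rvr^{q}_{r}(X))$. Applying the definition of $\theta_{r}$ then gives $\theta_{r}\big(\gamma + B_{q}(\rvr^{q}_{r}(X))\big) = \gamma + B_{q}(\vr_{r}(X))$, which is exactly the class we started with. Hence every class in $H_{q}(\vr_{r}(X))$ is hit, and $\theta_{r}$ is surjective.

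There is essentially no obstacle here: the one point deserving a sentence of care is the claim that a $q$-cycle of $\vr_{r}(X)$ is automatically a $q$-cycle of $\rvr^{q}_{r}(X)$, which is immediate once one notes the two complexes agree in all dimensions $\le q$. The argument does not depend on the value of $r$, so it is valid for all $r \geq 0$.
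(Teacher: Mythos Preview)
Your proof is correct and follows essentially the same approach as the paper's: both argue that since $\rvr^{q}_{r}(X)$ and $\vr_{r}(X)$ share all simplices in dimensions $\leq q$, any representative $q$-cycle $\gamma$ of a class in $H_{q}(\vr_{r}(X))$ already lives in $\rvr^{q}_{r}(X)$ and is sent back to the original class by $\theta_{r}$. If anything, your version is slightly more careful in explicitly noting that $Z_{q}(\rvr^{q}_{r}(X)) = Z_{q}(\vr_{r}(X))$, which is what guarantees $\gamma + B_{q}(\rvr^{q}_{r}(X))$ is a bona fide homology class.
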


\begin{proof}
    Let $\gamma + B_{q}(\Vr_{r}(X)) \in H_{q}(\Vr_{r}(X))$. 
    Since $\gamma \in C_{q}(\Vr_{r}(X))$ and all $q$-simplices of $\Vr_{r}(X)$ are also in $\rvr_{r}^{q}(X)$, it follows that $\gamma + B_{q}(\rvr_{r}^{q}(X))$ is mapped to $\gamma + B_{q}(\Vr_{r}(X))$ by $\theta_{r}$.
\end{proof}

In order to show that $\theta_{r}$ is an isomorphism for all $r$ we need to show that $\theta_{r}$ is injective for all $r$. Before we do so, we need the following preliminary lemmas and new notation. 

\begin{definition}
    Consider a point $x$ and let $\sigma$ be the $q$-simplex denoted by $\langle y_{0}...y_{q} \rangle$. Then we define the $(q+1)$-simplex $\langle x \sigma \rangle$ as $\langle xy_{0}...y_{q} \rangle$. 
\end{definition}

\begin{lemma}
\label{q-isomorphism-inclusions}
    Let $s > 0$. If $\theta_{s}$ is an isomorphism, then it follows that $B_{q}(\Vr_{s}(X)) = B_{q}(\rvr^{q}_{s}(X))$. 
\end{lemma}

\begin{proof}
The inclusion $B_{q}(\rvr_{s}^{q}(X)) \subset B_{q}(\Vr_{s}(X))$ follows from the fact that $\rvr_{s}^{q}(X) \subset \Vr_{s}(X)$. To get the reverse inclusion let $\gamma \in B_{q}(\Vr_{s}(X))$. Then we have $\theta_{s}(\gamma + B_{q}(\rvr_{s}^{q}(X))) = \gamma + B_{q}(\Vr_{s}(X)) = 0 + B_{q}(\Vr_{s}(X))$. Since $\theta_{s}$ is an isomorphism it follows that $\gamma \in \rvr_{s}^{q}(X)$. Thus we have the reverse inclusion $B_{q}(\Vr_{s}(X)) \subset B_{q}(\rvr_{s}^{q}(X))$.
\end{proof}

\begin{lemma}
\label{q-first-edge-proof}
    Consider $r>0$ arbitrary and suppose that $\theta_{s}$ is injective for all $s<r$. Let all the $q$-simplices of diameter $r$ be $\tau_{1}  < \cdots < \tau_{m_r} $. Then $\lune (\tau_{1}) = \emptyset$ and thus
    \begin{equation}
    \label{q-same-edges-equation-1}
         \{ \partial (\langle  x \tau_{1} \rangle) \; | \; x\in \lune (\tau_{1}) \} = \emptyset 
    \end{equation}
    and thus we trivially have 
    \begin{equation}
    \label{q-same-edges-equation-1-1}
         \{ \partial (\langle  x \tau_{1} \rangle) \; | \; x\in \lune (\tau_{1}) \} \subset B_{q}(\rvr_{r}^{q}(X)).
    \end{equation}
\end{lemma}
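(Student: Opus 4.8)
The plan is to prove the stronger assertion $\lune(\tau_1) = \emptyset$ directly; once this is in hand, both displayed equations in the statement follow immediately, since an empty index set makes $\{\partial(\langle x\tau_1\rangle) \mid x \in \lune(\tau_1)\}$ empty and the empty set is contained in $B_{q}(\rvr_{r}^{q}(X))$ trivially. This is the one place where the higher-degree base case genuinely differs from, and is easier than, its degree-1 analogue: for $q = 1$ the first edge can have a nonempty lune, which is exactly why Lemma \ref{first-edge-proof} had to do real work, whereas for $q \ge 2$ the lune of $\tau_1$ is forced to be empty.

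First I would write $\tau_1 = \langle y_0 \ldots y_q \rangle$ and suppose for contradiction that some point $x$ lies in $\lune(\tau_1)$. Unwinding the definition of the lune of a $q$-simplex, this says $\langle x\, y_0 \ldots \hat{y_i} \ldots y_q \rangle < \tau_1$ for every $i \in \{0,\ldots,q\}$. I would first note that $x \notin \{y_0,\ldots,y_q\}$: if $x = y_j$, the condition for $i = j$ would read $\tau_1 < \tau_1$, which is false; hence each $\langle x\, y_0 \ldots \hat{y_i} \ldots y_q \rangle$ is a genuine, non-degenerate $q$-simplex.

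Next I would invoke the hypothesis that $\tau_1$ is the \emph{first} $q$-simplex of diameter $r$. By Definition \ref{binary-relation-on-simplices}, for two $q$-simplices $\sigma,\sigma'$ the relation $\sigma < \sigma'$ can hold only if $\diam(\sigma) < \diam(\sigma')$, or $\diam(\sigma) = \diam(\sigma')$ and $\sigma$ precedes $\sigma'$ in the sorted lexicographic order. Since the simplices $\langle x\, y_0 \ldots \hat{y_i} \ldots y_q \rangle$ are all $<\tau_1$ and no $q$-simplex of diameter $r$ precedes $\tau_1$, each of them must have diameter strictly less than $r$.

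Finally I would extract the contradiction, and here the assumption $q \ge 2$ is essential. The value $\diam(\tau_1) = r$ is attained by some pair of vertices $y_a, y_b$ of $\tau_1$; since $\tau_1$ has $q + 1 \ge 3$ vertices, there is an index $i \in \{0,\ldots,q\}\setminus\{a,b\}$. Then both $y_a$ and $y_b$ are vertices of $\langle x\, y_0 \ldots \hat{y_i} \ldots y_q \rangle$, so its diameter is at least $d(y_a,y_b) = r$, contradicting the previous paragraph. Hence no such $x$ exists, i.e.\ $\lune(\tau_1) = \emptyset$, and the two displayed equations follow at once. The only subtlety to flag is the tacit assumption $q \ge 2$: the step of omitting a vertex that avoids a diametric pair of $\tau_1$ is precisely what has no counterpart when $q = 1$, and there is no other real obstacle in the argument.
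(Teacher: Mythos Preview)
Your proof is correct and follows essentially the same approach as the paper's: suppose $x\in\lune(\tau_1)$, deduce that each face $\langle x\,y_0\ldots\hat{y_i}\ldots y_q\rangle$ precedes $\tau_1$, and obtain a contradiction by exhibiting one such face of diameter $r$. Your version is in fact more carefully argued than the paper's terse proof---you explicitly identify the diametric pair $y_a,y_b$, pick $i\notin\{a,b\}$, and flag that this step requires $q\ge 2$, whereas the paper compresses all of this into the single sentence ``at least one of these simplices must have diameter $r$'' without justification or mention of the dimensional hypothesis.
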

\begin{proof}
    Denote $\tau_{i}$ by $\langle y_{0i} y_{1i} ... y_{qi} \rangle.$ Suppose for contradiction that $\lune (\tau_{1}) \neq \emptyset$. Let $x\in \lune(\tau_{1})$, then by the definition of of $\lune(\tau_{1})$ we must have that $\langle x y_{01} ...\hat{y_{i1}}...y_{q1} \rangle < \tau_{1}$. At least one of these simplices must have diameter $r$, which contradicts the fact that $\tau_{1}$ is the first $q$-simplex with diameter $r$. Thus we must have $\lune(\tau_{1}) = \emptyset$.  
\end{proof}

\begin{lemma}
\label{q-the-big-lemma}
    Consider $r>0$ arbitrary and suppose that $\theta_{s}$ is injective for all $s<r$. Let $\sigma$ be a $(q+1)$-simplex in $\Vr_{r}(X)$ such that $\diam (\sigma) = r$, then we have that $\partial \sigma \in B_{q}(\rvr_{r}^{q}(X))$. That is we have $B_{q}(\rvr_{r}^{q}(X)) = B_{q}(\Vr_{r}(X))$.
\end{lemma}

\begin{proof}
    First observe that by Lemma \ref{q-theta-is-surjective} that $\theta_s$ is an isomorphism for all $s < r$. This means that $B_{q}(\rvr_{s}^{q}(X)) = B_{q}(\Vr_{s}(X))$ by Lemma \ref{q-isomorphism-inclusions}. 
    
    Let all the $q$-simplices of diameter $r$ be $ \tau_{1}  < \cdots <  \tau_{m_{r}} $ and denote $\tau_{k} = \langle y_{0k}....,y_{qk} \rangle$. We wish to show that all $(q+1)$-simplices $\sigma$, with $\diam(\sigma) = r$ are such that $\partial \sigma \in B_{q}(\rvr_{r}^{q}(X))$. This is equivalent to showing that 

    \begin{equation}
    \label{q-same-edges-equation}
         \{ \partial ( \langle  x \tau_{j} \rangle)\; | \; x\in \lune (\tau_{j}) \} \subset B_{q}(\rvr_{r}^{q}(X))
    \end{equation}

for $j = 1,...,m_{r}$. We will prove that (\ref{q-same-edges-equation}) holds for $j = 1,...,m_{r}$ by induction. That is we will show the following:

\begin{itemize}
    \item That (\ref{q-same-edges-equation}) holds for $j=1$. This is just Lemma \ref{q-first-edge-proof}. 

    \item If (\ref{q-same-edges-equation}) holds for $j=1,...,k$, then it holds for $j = k+1$. 
\end{itemize}

We now show that if (\ref{q-same-edges-equation}) holds for $j = 1,...,k$, then it holds for $j = k+1$. 

Let $x$ be any point in $\lune (\langle y_{0 (k+1)}...y_{q (k+1)} \rangle)$ and consider the \\ $(q+1)$-simplex $\langle x y_{0(k+1)}...y_{q(k+1)} \rangle$. Let $w$ be the point in the connected component of $\lune (\langle y_{0(k+1)}...y_{q(k+1)} \rangle)$ that also contains $x$ such that $w\in L(\langle y_{0(k+1)}...y_{q(k+1)}\rangle)$. If $x=w$ then there is nothing to prove since $\sigma \in \rvr^{q}_{r}(X)$ by definition and thus $\partial \sigma \in B_{1}(\rvr_{r}^{q}(X))$. Otherwise, since $x$ and $w$ are in the same component we know there exists $v_{0},...,v_{t} \in \lune (\langle y_{0(k+1)}...y_{q(k+1)} \rangle)$ such that $x = v_0, v_1, ..., v_t = w$ forms a path with

\begin{equation}
\langle v_{i}v_{i+1}y_{0(k+1)}...\hat{y}_{l_1(k+1)}...\hat{y}_{l_2(k+1)}...y_{q(k+1)} \rangle < \langle y_{0(k+1)}...y_{q(k+1)} \rangle 
\label{path-equation}
\end{equation}

for all $0 \leq l_1 < l_2 \leq q$.

We know that $\partial (\partial (\langle y_{0(k+1)}...y_{q(k+1)} v_{i}v_{i+1} \rangle )) = 0$ and thus 
\begin{equation}
\begin{aligned}
    \partial (\langle y_{0(k+1)}...y_{q(k+1)} v_{i} \rangle ) = 
    \partial (\langle y_{0(k+1)}...y_{q(k+1)} v_{i+1} \rangle) \\ + \sum_{l=0}^{q}\partial (\langle v_{i}v_{i+1}y_{0(k+1)}...\hat{y}_{l(k+1)}...y_{q(k+1)} \rangle)
    \end{aligned}
    \label{q-equation-general-proof}
\end{equation}
We now show that $\partial (\langle v_{i}v_{i+1}y_{0(k+1)}...\hat{y}_{l(k+1)}...y_{q(k+1)} \rangle) \in B_{q}(\rvr_{r}^{q}(X))$ for $l = 0,...,q$ :

\begin{itemize}
    \item $\langle v_{i}v_{i+1}y_{0(k+1)}...\hat{y}_{l_{1}(k+1)}...\hat{y}_{l_{2}(k+2)}...y_{q(k+1)} \rangle < \langle y_{0(k+1)}...y_{q(k+1)} \rangle$ \\ for all $0 \leq l_1 < l_2 \leq q$ by (\ref{path-equation}). 

    \item $\langle v_{i} y_{0(k+1)}...\hat{y}_{l(k+1)}...y_{q(k+1)} \rangle < \langle y_{0(k+1)}...y_{q(k+1)} \rangle $ for $l =0,...,q$ since 
    
    \vspace{3pt}
    $v_{i} \in \lune (\langle y_{0(k+1)}...y_{q(k+1)} \rangle)$. 

    \item $\langle v_{i+1} y_{0(k+1)}...\hat{y}_{l(k+1)}...y_{q(k+1)} \rangle < \langle y_{0(k+1)}...y_{q(k+1)} \rangle $ for $l =0,...,q$ since 
    
    \vspace{3pt}
    $v_{i+1} \in \lune (\langle y_{0(k+1)}...y_{q(k+1)} \rangle)$.
    
\end{itemize}

For $l= 0,...,q$, all faces of $\langle v_{i}v_{i+1}y_{0(k+1)}...\hat{y}_{l(k+1)}...y_{q(k+1)} \rangle$ appear in the filtration before $\langle y_{0(k+1)}...y_{q(k+1)} \rangle$. For each $l=0,...,q$ we either have all \\faces of $\langle v_{i}v_{i+1}y_{0(k+1)}...\hat{y}_{l(k+1)}...y_{q(k+1)} \rangle$ have diameter less than $r$ or at least one of them has diameter $r$. If they all have diameter less than $r$ then \\ $\partial (\langle v_{i}v_{i+1}y_{0(k+1)}...\hat{y}_{l(k+1)}...y_{q(k+1)} \rangle) \in B_{q}(\Vr_{s}(X)) = B_{q}(\rvr^{q}_{s}(X))$ for $s < r$. If at \\ least one of the faces of $\langle v_{i}v_{i+1}y_{0(k+1)}...\hat{y}_{l(k+1)}...y_{q(k+1)} \rangle$ has diameter $r$ then let $\eta$ be the face with diameter $r$ that appears latest in the filtration. Let $z$ be the \\ point in $\langle v_{i}v_{i+1}y_{0(k+1)}...\hat{y}_{l(k+1)}...y_{q(k+1)} \rangle$ that does not appear in $\eta$, then we have that $z\in \lune(\eta)$. Thus $\partial (\langle v_{i}v_{i+1}y_{0(k+1)}...\hat{y}_{l(k+1)}...y_{q(k+1)} \rangle) = \partial(\langle z \eta\rangle)$. Since $\eta$ \\ comes before $\tau_{k+1}$ in the filtration it follows that 

\begin{equation}
\partial (\langle v_{i}v_{i+1}y_{0(k+1)}...\hat{y}_{l(k+1)}...y_{q(k+1)} \rangle) \in \{\partial (\langle x \tau_{j} \rangle) | x \in \lune (\tau_{j})\}
\end{equation}

for some $j = 1,...,k$.
Plugging $i = t-1$ into  (\ref{q-equation-general-proof}) we obtain the following.

\begin{equation}
\begin{aligned}
\label{q-equation-general-proof-q-1}
    \partial (\langle y_{0(k+1)}...y_{q(k+1)} v_{t-1} \rangle ) = 
    \partial (\langle y_{0(k+1)}...y_{q(k+1)} v_{t} \rangle)\\ + \sum_{l=0}^{q}\partial (\langle v_{t-1}v_{t}y_{0(k+1)}...\hat{y}_{l(k+1)}...y_{q(k+1)} \rangle)
    \end{aligned}
\end{equation}
We have already shown that the summation on the right hand side of (\ref{q-equation-general-proof-q-1}) is in $B_{q}(\rvr_{r}^{q}(X))$. Remembering that $v_{t} = w$, the fact that the first term on the right hand side of (\ref{q-equation-general-proof-q-1}) is in $B_{q}(\rvr_{r}^{q}(X))$ follows from $\langle v_{t}\tau_{k+1}\rangle$ being a $(q+1)$-simplex in $\rvr_{r}^{q}(X)$ by the definition of $\rvr_{r}^{q}(X)$. Thus it follows that $\partial (\langle v_{t-1} \tau_{k+1} \rangle) \in B_{q}(\rvr_{r}^{q}(X))$. Using (\ref{q-equation-general-proof}) repeatedly by letting $i = t-2, t-3, ...,0$ we can subsequently show that $\partial (\langle \tau_{k+1}v_{t-2}),...,\partial(\langle \tau_{k+1}v_{0}\rangle) \in B_{q}(\rvr_{r}^{q}(X))$. Remembering that $v_{0} = x$ we thus have that $\partial (\langle x\tau_{k+1} \rangle)   \in B_{q}(\rvr_{r}^{q}(X))$.

\end{proof}

We are finally in a position to show that $\theta_{r}$ is injective for all $r \geq 0$.

\begin{lemma}
    $\theta_{r}$ is injective for all $r\geq 0$.
\end{lemma}

\begin{proof}
We now show that $\theta_{r}$ is 
for all values of $r$. We will use a proof by induction by inducting on the value of $r$. That is to say, we will prove the following.

\begin{itemize}
    \item $\theta_{0}$ is injective. 

    \item If $\theta_{s}$ is injective for all $s<r$, then $\theta_{r}$ is injective. 
\end{itemize}

Since $H_{q}(\Vr_{0}(X))$ and $H_{q}(\rvr_{0}^{q}(X))$ are zero the injectivity of $\theta_{0}$ is immediate. Now we show that if $\theta_{s}$ is injective for all $s<r$, then $\theta_{r}$ is injective. Suppose that $\theta_{r}(\gamma + B_{q}(\rvr_{r}^{q}(X))) = \gamma + B_{q}(\Vr_{r}(X)) = 0 + B_{q}(\Vr_{r}(X))$. If we can show that $\gamma \in B_{q}(\rvr_{r}^{q}(X))$ then this would show that $\theta_{r}$ is injective. Since $\gamma + B_{q}(\Vr_{r}(X)) = 0 + B_{q}(\Vr_{r}(X))$ it follows that $\gamma \in B_{q}(\Vr_{r}(X))$. From Lemma \ref{q-the-big-lemma} we know that \\ $B_{q}(\Vr_{r}(X)) = B_{q}(\rvr_{r}^{q}(X))$ and therefore $\gamma \in B_{q}(\rvr_{r}^{q}(X))$. 

\end{proof}
Finally, we show that the diagram in Theorem \ref{deg-q-theorem-VR-RVR-isomorphism} is commutative to complete the proof of Theorem $\ref{deg-q-theorem-VR-RVR-isomorphism}$. 

\begin{lemma}
    The diagram in Theorem \ref{deg-q-theorem-VR-RVR-isomorphism} is commutative.
\end{lemma}

\begin{proof}
    Let $0 < r_1 < r_2$ since the case where $r_1 = 0$ is trivial. Let $\gamma + B_{q}(\rvr^{q}_{r_1}(X))$ be an element of $H_{q}(\rvr^{q}_{r_1}(X))$. Then $g_{r_1}^{r_2}(\theta_{r_1} (\gamma + B_{q}(\rvr^{q}_{r_1}(X))) = g_{r_1}^{r_2}(\gamma + B_{q}(\Vr_{r_1}(X))) = \gamma + B_{q}(\Vr_{r_2}(X))$ and $\theta_{r_2}(f_{r_1}^{r_2}(\gamma + B_{q}(\rvr^{q}_{r_1}(X))) = \theta_{r_2}(\gamma + B_{q}(\rvr^{q}_{r_2}(X))) = \gamma + B_{q}(\Vr_{r_2}(X))$. Thus commutativity is proven.
\end{proof}

\begin{comment}
\section{Bounds on the value of $b(X)$}

In this section, we give upper bounds on the value of $b(X)$ for $X$ a Euclidean point cloud. Recall that $b(X)$ refers to the number of bars in $\ph{1}(X)$. The reader is reminded of Lemma \ref{RNG-lemma-3} which says that we can find $b(X)$ by computing $|\rng(X) \setminus \mst(X)|$. We first discuss the case where $X\subset \mathbb{R}^2$. Toussaint showed in \cite{ToussaintRNG} that $\rng(X)$ is a subset of $\mathrm{DT}(X)$, the Delaunay triangulation of $X$. In $\mathbb{R}^2$, the Delaunay triangulation has $O(n)$ edges meaning that we have the following. 

\begin{lemma}
    If $X\subset \mathbb{R}^2$. Then $b(X)$ is bounded above by $O(n)$. 
\end{lemma}

We now analyse the situation if $X\subset \mathbb{R}^D$ with $D\geq 3$. Let $R(D)$ be a set of points in $\mathbb{R}^D$ such that for every point $x$ on the unit sphere $S^{D-1}$ there exists $y\in R(D)$ such that $d(x,y) < \frac{1}{2}$. 
Then Supowit showed in section 7 of \cite{supowit_RNG} the following. 

\begin{theorem}
Let $X \in \mathbb{R}^D$ such that no three points of $X$ form an isosceles triangle. Then $|\rng(X)|$ is bounded above by $O(n|R(D)|)$. 
\end{theorem}

The following lemma is then a simple consequence of Lemma \ref{RNG-lemma-3}. 

\begin{lemma}
    Consider $X\in \mathbb{R}^D$ and suppose no three points in $X$ form an isosceles triangle. Then $b_{d}(X)$ is bounded above by $O(n|R(D)|)$. 
\end{lemma}

\end{comment}

\end{appendices}

\newpage

\bibliography{references}
\bibliographystyle{ieeetr}

\end{document}